\newcommand{\ct}{\mathfrak{C}}
\newcommand{\dist}{\textnormal{dist}}
\newcommand{\tchi}{\tilde{\chi}}
\newcommand{\M}{\mathcal{M}}
\newcommand{\ve}{\varepsilon}
\newcommand{\pp}{e^{-\frac{\ve D^2_t}{2 \tau}}}
\newcommand{\dle}{\Box_{\ell, \ve}}
\newcommand{\rt}{\tilde{R}}
\newcommand{\rd}{\tilde{r}_1}
\newcommand{\g}{\gamma}
\renewcommand{\a}{\mathfrak{a}}
\renewcommand{\b}{\mathfrak{b}}
\newcommand{\chil}{\chi_\lambda}
\newcommand{\vs}{\varsigma}
\renewcommand{\Im}{\textnormal{Im}}
\renewcommand{\Re}{\textnormal{Re}}
\newcommand{\kp}{\kappa^\prime}
\newcommand{\ttheta}{\tilde{\theta}}
\newcommand{\D}{\mathcal{D}}
\newcommand{\C}{\mathcal{C}}
\newcommand{\rtt}{\tilde{r}}
\numberwithin{equation}{section}
\theoremstyle{plain}
\newtheorem{thm}{Theorem}[section]
\newtheorem{lem}[thm]{Lemma}
\newtheorem{prop}[thm]{Proposition}
\newtheorem{cor}[thm]{Corollary}
\theoremstyle{definition}
\newtheorem{remark}[thm]{Remark}
\newtheorem{definition}{Definition}
\title{On the blowup of quantitative unique continuation estimates for waves and applications to stability estimates}
\author{Spyridon Filippas\footnote{Department of Mathematics and Statistics, University of Helsinki, Helsinki, Finland, email: spyridon.filippas@helsinki.fi}, Lauri Oksanen\footnote{Department of Mathematics and Statistics, University of Helsinki, Helsinki, Finland, email: lauri.oksanen@helsinki.fi}}
\date{\today}
\def\keywords{
	\vspace{1ex}
	\noindent
	\if@twocolumn
	\small{\bf  Keywords}\/---$\!$    \else
	\begin{center}\small\ {\bf Keywords}\end{center}\quotation\small
	\fi}
\def\endkeywords{\vspace{0.6em}\par\if@twocolumn\else\endquotation\fi
	\normalsize\rm}
\def\p{\partial}
\def\R{\mathbb R}
\DeclareMathOperator{\supp}{supp}
\newcommand{\norm}[3]{\left\Vert #1 \right\Vert_{#2}^{#3}}
\renewcommand{\div}{\textnormal{div}}
\pgfplotsset{compat=1.18}
\begin{document}
	
	\maketitle
	
	\begin{abstract}
	   In this paper we are interested in the blowup of a geometric constant $\ct(\delta)$ appearing in the optimal quantitative unique continuation property for wave operators. In a particular geometric context we prove an upper bound for $\ct(\delta)$ as $\delta$ goes to $0$. Here $\delta>0$ denotes the distance to the maximal unique continuation domain. As applications we obtain stability estimates for the unique continuation property up to the maximal domain. Using our abstract framework~\cite{FO25abstract} we also derive a stability estimate for a hyperbolic inverse problem. The proof is based on a global explicit Carleman estimate combined with the propagation techniques of~\cite{Laurent_2018}.
	\end{abstract}
	
	\begin{keywords}
		\noindent Unique continuation, wave operator, inverse problems, stability estimates
		
		\medskip

		\noindent
		\textbf{2010 Mathematics Subject Classification:}
		35B60, 
		35L05, 
		35Q93, 
           35R30 
	\end{keywords}

	\tableofcontents
	
	\section{Introduction}

\subsection{Background}

In this paper, motivated by applications in inverse problems and control theory, we are interested in studying the behavior of a particular constant related to the property of unique continuation for the wave operator. Generally speaking, given a differential operator $P$ the question of unique continuation consists in asking the following: suppose that $u$ satisfies $Pu=0$ on some domain $\Omega$. Does the knowledge of $u$ on some subset $U \subset \Omega$ determine $u$ \textit{everywhere}? 
If this property holds, then the next natural question is if we can \textit{quantify} it. This is expressed via a stability estimate of the form
\begin{equation}
\label{quantitative phi}
\norm{u}{\Omega}{} \lesssim \theta\left(\norm{u}{U}{},\norm{Pu}{\Omega}{},\norm{u}{\Omega}{}\right),    
\end{equation}
with $\theta$ satisfying
$$
\theta(a,b,c) \stackrel{a,b \rightarrow0}{\longrightarrow} 0, \quad \textnormal{with }c\textnormal{ bounded.}
$$
Such an estimate is also called a conditional estimate, since one needs to make the assumption that $\norm{u}{\Omega}{}$ remains bounded and it reflects the ill posed nature of the problem under consideration.

In the case where $P$ is the wave operator the question is of particular interest when the observation set $U$ is a cylinder of the form $(-T,T) \times \omega$ and  $\Omega=(-T,T) \times \M$. In this case one wants to know if the observation of the wave from the set $\omega$ during a time $2T$ is sufficient to determine it in the whole domain $\M$. In one assumes that \textit{all} the coefficients of $P$ are analytic then the Holmgren-John theorem~\cite{John:49} gives a positive answer as soon as $T > \mathcal{L}(\M, \omega)$ where we denote by $$\mathcal{L}(\M, \omega)= \sup_{x \in M}\textnormal{dist}(x,\omega),$$ the largest distance of the subset $\omega$ to a point of $M$. Due to finite speed of propagation this time of observation is in fact optimal, see e.g~\cite{russell1971boundarya,russell1971boundaryb}.

However, once analyticity is no longer assumed the problem becomes significantly more complicated. A first seminal result for the wave equation was achieved in~\cite{Robbiano:91} where the property of unique continuation from $(-T,T) \times \omega$ is proved as soon as $\omega \neq \emptyset$ and the time $T>0$ is sufficiently large. The observation time $T$ was subsequently improved in~\cite{Hormander:92}. It is in the breakthrough by Tataru~\cite{Tataru:95} that unique continuation in \textit{optimal time} was finally proved. The regularity assumption for the coefficients of the operator is a partial analyticity condition. In particular, this gives an optimal unique continuation result for wave operators whose coefficients are independent of time. This and some follow up works eventually led to a very general unique continuation result which applies to operators with partially~\textit{analytic} coefficients: the Tataru-Robbiano-Zuily-Hörmander~\cite{Tataru:95, tataru1999unique, RZ:98,Hor:97} Theorem. In the case of the wave operator the condition is analyticity of all the coefficients with respect to time. Moreover, Alinhac and Alinhac-Bahouendi~\cite{AB:79,Alinhac:83,AB:95} constructed counterexamples if the coefficients are considered to be only smooth with respect to time. Those were later refined by H\"ormander~\cite{Hormander:00} who proved that in fact even a Gevrey $s$ regularity condition for any $s>1$ is not enough to guarantee uniqueness. This shows that the analyticity condition is essentially optimal (see~\cite{LL:23notes} for more details in the case of the wave operator). We also refer to the recent article~\cite{FLL:24} where the analyticity condition was relaxed to Gevrey $2$ for some time dependent Schrödinger operators. 

Concerning the \textit{quantitative} counterpart of the above, a stability estimate was proved by Bossi-Kurylev-Lassas in~\cite{BKL:16, bosi2018stability, bosi2017reconstruction} and Burago-Ivanov-Lassas-Lu~\cite{burago2024quantitativestabilitygelfandsinverse} with 
\begin{align}
\label{est_log_alpha}
    \theta(a,b,c)=c \log \left(1+\frac{c}{a+b}\right)^{-\alpha}, \alpha \in (0,1).
\end{align} 
At the same time Laurent-Léautaud quantified in~\cite{Laurent_2018} the general theorem of Tataru-Robbiano-Zuily-Hörmander theorem. Their estimate is of the form~\eqref{est_log_alpha} with $\alpha=1$. More precisely, in the case of the wave operator $\partial_t^2-\Delta+q$ with $q$ a time independent potential, they prove the following. For any open subset $\omega \subset \M$ of a compact Riemannian manifold $(\M, g)$, for $u$ satisfying $(\p^2_t-\Delta_g+q)u=f$ and any $T > \mathcal{L}(\M, \omega)$ one has, for some $\ve>0$,
\begin{align}
\label{est_ll_intro}
  \norm{u}{L^2((-\ve,\ve) \times \M)}{}  \leq \ct \frac{\norm{u}{H^1((-T,T)\times \M)}{}}{ \log \left(1+\frac{\norm{u}{H^1((-T,T)\times \M)}{}}{\norm{u}{L^2((-T,T) \times H^1(\omega))}{} +\norm{f}{L^2((-T,T)\times \M)}{}}\right)}.
\end{align}
The constant $\ct$ in~\eqref{est_ll_intro} depends on $T, \M, \omega, $ and $\norm{q}{L^\infty}{}$. The log stability above is in fact \textit{optimal} as shown by Lebeau in~\cite{Leb:Analytic} (see as well Theorem 6.10 in~\cite{KRL:11} for an instability result). In particular,~\eqref{est_log_alpha} is suboptimal due to $\alpha<1$. It should be emphasized however that similarly to the spirit of the present paper and contrary to~\cite{Laurent_2018}, in~\cite{BKL:16, bosi2017reconstruction, burago2024quantitativestabilitygelfandsinverse} the authors give bounds for the constant $c$ in~\eqref{est_log_alpha}.

\subsection{Main results and comments}
\label{sec_main_results}

In this article, we are interested in understanding the behavior of the optimal quantitative unique continuation constant $\ct$ in~\eqref{est_ll_intro} in a regime where $T-\mathcal{L}(\M, \omega)>0$ goes to $0$. For fixed $\M, \omega$ this amounts to studying $\ct$ as the observation time approaches the critical one. However, one can also fix $\M, \omega$ and define $T=\mathcal{L}(\M, \omega)$. Then, estimate~\eqref{est_ll_intro} still holds if one replaces the norm $\norm{u}{L^2((-\ve,\ve) \times \M)}{}$ by some norm in a smaller domain $\M_\delta \Subset \M$ so that $T>\mathcal{L}(\M_\delta, \omega)$. In this case, in the limit $\delta \to  0$ one looks at the behavior of $\ct$ as $\M_\delta$ approaches the maximal domain where the solution has to vanish. This corresponds to reaching exactly the threshold where unique continuation no longer holds and consequently $\ct$ is expected to blow up.

\bigskip

Let us now state our main results. Let $r_0>1$ and consider $R \in [\frac{1}{r_0}, r_0]$. Define the cylinder $\C \subset \R^{1+n}=\R_t \times \R^n_x$ by 
\begin{align}
\label{def_cylinder}
\C=\{(t,x) | \: |x| < R, |t| < \frac{R}{2}  \},
\end{align}
and the diamond $\D$
\begin{align}
    \label{def_diamond}
    \D=\{(t,x) | \: |t|<\frac{3R}{2} -|x|, |t|<\frac{R}{2}  \}.
\end{align}

By qualitative unique continuation one has that a solution of the wave equation that vanishes in the cylinder $\C$ has to vanish in the whole diamond $\D$, see Figure~\ref{cone and delta}. Moreover, the diamond $\D$ is the maximal domain in which such a solution has to vanish. Our main result gives an~\textit{optimal} stability estimate of this with an explicit dependence of the constant as one approaches $\D$.

Let us now define smaller domains that approach $\D$. Consider the function $\phi$ defined by $2\phi=-t^2+(|x|-\frac{3R}{2} )^2$ and write $\M_\delta=\{ \phi > \delta^2 \}$. The function $\phi$ gives a foliation of the cone $ |t|<\frac{3}{2} R -|x| $, as shown in Figure~\ref{fig_foliation}. Notice in particular that $\D=\{ \phi>0\} \cap \{t \in [-\frac{R}{2}, \frac{R}{2}]\}$. We denote by $\Box=\p^2_t-\Delta$ the wave operator.

\begin{thm}[Optimal stability estimate arbitrarily close to the maximal domain]
\label{thm_log_close_tocone}
  Consider $q \in L^\infty(\D)$ with $\norm{q}{L^\infty(\D)}{} \leq M$. There exists $N >0 $ depending on $r_0,n,M $ only, such that the following holds. Let $\delta>0$ small and define $\D_\delta= \D \cap \M_\delta$. Then for any $u \in H^1(\D)$ with $(\Box+q) u = f \in L^2$ on $\D$ one has
\end{thm}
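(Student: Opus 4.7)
The plan is to combine a \emph{global} explicit Carleman estimate with a weight adapted to the foliation $\{\phi = \textnormal{const}\}$ and the interpolation-propagation machinery of Laurent-Léautaud. Because $2\phi(t,x) = (|x|-\frac{3R}{2})^2 - t^2$ is strictly pseudoconvex with respect to $\Box$ in a neighborhood of $\D$ (possibly after passing to a convexified phase $e^{\lambda\phi}$ with $\lambda$ fixed in terms of $r_0, n, M$), the same function acts simultaneously as the foliating function for $\D$ and as a Carleman phase. Tracking a \emph{single} global weight, rather than the patchwork of localized quadratic phases one uses in qualitative Tataru-type proofs, is what will allow an explicit formula for $\ct(\delta)$.

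First, I would establish a Carleman inequality of the form
\begin{equation}
\tau \int_{\D} e^{2\tau \phi}\bigl(|\p_t u|^2 + |\nabla_x u|^2 + \tau^2 |u|^2\bigr)\, dt\, dx \leq C \int_{\D} e^{2\tau \phi}|\Box u|^2\, dt\, dx + \textnormal{boundary contributions},
\end{equation}
valid in an open neighborhood of $\D$ for every $\tau \geq \tau_0$, with $C$ and $\tau_0$ depending only on $r_0, n, M$. The pseudoconvexity is checked by a direct symbolic computation, and the potential $q$ is absorbed into the left-hand side by $\tau$-largeness.

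Second, I would apply the Carleman estimate to $\chi u$, where $\chi \equiv 1$ on $\D_\delta = \{\phi > \delta^2\}$, $\chi$ is supported in $\{\phi > \delta^2/2\}$, and the commutator $[\Box,\chi]$ is localized in the intermediate strip together with the lateral piece of $\partial \D$ meeting $\C$. Using $e^{2\tau \phi} \geq e^{2\tau \delta^2}$ on $\D_\delta$ and bounding $e^{2\tau \phi} \leq e^{2\tau \phi_{\max}}$ on $\textnormal{supp}\,\nabla \chi$, I would arrive at a bilateral estimate of the form
\begin{equation}
\|u\|_{L^2(\D_\delta)}^2 \leq C e^{C(\delta)\tau}\bigl(\|u\|_{L^2_t H^1_x(\C)}^2 + \|f\|_{L^2(\D)}^2\bigr) + C e^{-c(\delta)\tau}\|u\|_{H^1(\D)}^2,
\end{equation}
with $C(\delta),c(\delta)$ explicit in $\delta$ (through $\phi_{\max}-\delta^2$ and $\delta^2$ respectively). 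To transport the observation from a tubular neighborhood of a single level set into the full cylinder $\C$, I would import the chain of local three-ball interpolation inequalities from~\cite{Laurent_2018}: these let one traverse $\D$ along finitely many overlapping charts whose number depends only on $r_0$ and $M$, each step contributing multiplicative and exponential constants that can be bookkept in closed form. Optimizing $\tau$ then yields the logarithmic bound with $\ct(\delta)$ expressed as a finite product of explicit factors.

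The main obstacle is the final bookkeeping: one needs to show that $\ct(\delta)$ grows only polynomially, or at worst as an explicit exponential, in $\delta^{-1}$, rather than as a tower of exponentials. Two delicate points drive this. First, after optimizing in $\tau$ the exponential factor $e^{C(\delta)\tau}$ turns into an inverse logarithm raised to a power controlled by the ratio $C(\delta)/c(\delta)$, so this ratio must be tracked sharply rather than bounded crudely. Second, the boundary contributions on the spacelike caps $\{t = \pm R/2\}\cap\D$, where $\phi$ does not blow up, cannot be absorbed by the Carleman weight and must instead be discarded using the finite-speed-of-propagation structure built into $\D$ via the characteristic nature of the lateral piece of $\partial\D$.
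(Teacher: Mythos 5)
Your proposal has a fundamental gap: the function $\phi$ given by $2\phi = -t^2 + (|x|-\tfrac{3R}{2})^2$ is \emph{not} pseudoconvex with respect to $\Box$ in the neighborhood of $\D$ you need, and no exponential convexification $e^{\lambda\phi}$ will fix this. The level sets $\{\phi = c\}$ for $c > 0$ are timelike hypersurfaces (since $g(\nabla\phi,\nabla\phi) = 2\phi > 0$ in the Minkowski metric), and unique continuation across timelike surfaces for $\Box$ with merely bounded lower-order terms is exactly the setting in which classical H\"ormander pseudoconvexity fails and the Alinhac counterexamples live. If the classical Carleman inequality you write in your first display were true, you would obtain H\"older stability from a single application with optimized $\tau$, and in particular unique continuation without any analyticity-in-time assumption, contradicting the known counterexamples. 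This is why Tataru's Gaussian weight $e^{-\ve D_t^2/(2\tau)}$ is indispensable here, and your proposal never mentions it. Concretely, the paper's Proposition~\ref{prop_subell} produces a ``subelliptic'' inequality only with an extra term $C\tau\int|\p_t v|^2$ on the right-hand side that cannot be absorbed; controlling that term is precisely what the Gaussian weight is for (Theorem~\ref{thm_explicit_carleman}), and the Fourier-localization it introduces is also what obstructs a one-shot argument and forces the iteration along level sets of $\phi$.

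Secondary issues follow from the same omission. The ``three-ball interpolation'' step you invoke is not what the Laurent--L\'eautaud propagation scheme is: the actual mechanism is a commutator estimate plus a complex-analysis (Phragm\'en--Lindel\"of) argument on the Fourier side (Section~\ref{sec_local} here), again driven by the Gaussian weight. And your expectation that $\ct(\delta)$ grows ``polynomially, or at worst as an explicit exponential in $\delta^{-1}$'' is inconsistent with what actually happens: the number of iterations required to reach $\{\phi > \delta\}$ scales like $\delta^{-2}$, each step multiplies the constant by a factor $\delta^{-N}$, and the final constant is $\ct(\delta) = (1/\delta)^{N/\delta^4}$ (after the substitution $\delta \mapsto \delta^2$). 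This super-exponential blowup is not crude bookkeeping that could be sharpened away; it is intrinsic to the failure of pseudoconvexity, as the paper's Remark~\ref{rem_numberofiterations} explains.
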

\begin{align}
\norm{ u}{L^2(\D_{\delta})}{} \leq \ct(\delta) \frac{\norm{u}{H^1(\D)}{}}{\log\left(1+\frac{\norm{u}{H^1(\D)}{}}{\norm{u}{H^1(\C)}{}+\norm{f}{L^2(\D)}{}}\right)},
\end{align}
where 
\begin{align}
\label{the_coste}
\ct(\delta)=\left(\frac{1}{\delta }\right)^{\frac{N}{\delta^4}}.
\end{align}

Theorem~\ref{thm_log_close_tocone} is optimal in two ways:

\begin{itemize}
    \item The domain $\D$ is the maximal one, for otherwise finite speed of propagation for waves would be violated. 

    \item The log stability estimate is the optimal one.
\end{itemize}

The constant $\ct(\delta)$ can be seen as an approximate observability constant for the wave operator. By duality, unique continuation is equivalent to approximate observability, and as proved in~\cite{Robbiano:95}, a quantitative unique continuation result gives an estimate on the cost of the control. See as well the discussion in~\cite[Section 2]{FO25abstract}. Studying the behavior of geometric constants in control theory has been an active field of research, especially in the case of the heat equation. It is known since the seminal papers of Lebeau-Robbiano~\cite{LR:95} and Fursikov-Imanuvilov~\cite{FI:96} that for any time $T>0$ and any open $\omega$ the heat equation is exactly observable. The blow up of the associated observability constant with respect to $(\omega, T)$ has been the object of several studies. We refer to the introduction in~\cite{LL:18} for more details concerning geometric constants in control theory and their links to each other. 

Concerning the wave operator, in~\cite{LL:16} Laurent-Léautaud give bounds for the blow up of the~\textit{exact} observability constant for waves as the time tends to the minimal exact observability time. It is known since the pioneering work of Bardos–Lebeau–Rauch~\cite{BLR:92} that the necessary and sufficient condition for exact observability for waves is the Geometric Control Condition. In~\cite{LL:16} the authors use in a crucial way the optimal quantitative estimate~\eqref{est_ll_intro}. However, the geometric control time is larger than the unique continuation time. As a consequence, as far as the blow up of the \textit{exact} observability constant is concerned, one does not need to study the blow of the~\textit{approximate} observability constant $\ct(\delta)$. Theorem~\ref{thm_log_close_tocone} is, to the best of our knowledge, the first result giving a bound for the optimal approximate observability constant for waves.

\bigskip

Additionally, Theorem~\ref{thm_log_close_tocone} can be combined with an optimization argument to derive a stability estimate up to the~\textit{whole diamond}.

\begin{thm} [Stability estimate up to the optimal domain]
   \label{thm_up_to_cone}
  Consider $q \in L^\infty(\D)$ with $\norm{q}{L^\infty(\D)}{} \leq M$. There exists $C>0$ depending on $r_0, M,n$ only, such that for $u \in H^1(\D)$ satisfying $(\Box u+q) =f \in L^2$ on $\D$ one has the following stability estimate
 \begin{align}
     \norm{u}{L^2(\D)}{} \leq \frac{C  \norm{u}{H^1(\D)}{}}{\left(\log\left(\log\left(1+\frac{\norm{u}{H^1(\D)}{}}{\norm{u}{H^1(\C)}{}+\norm{f}{L^2(\D)}{}}\right)\right) +1\right)^{\frac{4}{15}}}.
 \end{align}
\end{thm}

\begin{remark}
    The result of Theorem~\ref{thm_up_to_cone} is a $\log \log$ stability estimate and should be interpreted as follows. Suppose that $u$ satisfies $(\Box +q)u =0$, $\norm{u}{H^1}{}=1$ and $\norm{u}{H^1(\C)}{}= \ve$. Then there is $C>0$ and $\ve_0>0$ depending only on $r_0,M,n$ such that for $0< \ve \leq \ve_0$ one has
    \begin{align}
      \norm{u}{L^2(\D)}{} \leq  \frac{C}{\left(\log |\log \ve| \right)^{\frac{4}{15}}}.
    \end{align}
\end{remark}

It is interesting to compare the result of Theorem~\ref{thm_up_to_cone} with that of conditional stability estimates for the elliptic unique continuation problem. It is known since the work of Hadamard~\cite{hadamard1902problemes} that this problem is ill posed in the sense that its solution may not depend continuously on the data. However, a stability estimate can be proved if one adds some a priori assumption on the solution $u$. This is a conditional stability of the form~\eqref{quantitative phi}. In the case where $\omega\subset \Omega$ is the observation set and $V \Subset \Omega$ is an open set that~\textit{stays away} from the boundary of $\Omega$ one obtains a Hölder type estimate, that is, an estimate of the form $$\norm{u}{H^1(V)}{} \lesssim \norm{u}{H^1(\Omega)}{1-\delta}(\norm{\Delta u}{L^2(\Omega)}{}+\norm{u}{L^2(\omega)}{})^\delta.$$ This can be seen as the analogue of~\eqref{est_ll_intro} in the elliptic case. However, if one wants to obtain a stability estimate~\textit{up to the whole} domain $\Omega$ then the estimate deteriorates to logarithmic, that is $$\norm{u}{H^1(\Omega)}{}\lesssim \frac{\norm{u}{H^2(\Omega)}{}}{\log\left(1+\frac{\norm{u}{H^2(\Omega)}{}}{\norm{\Delta u}{L^2(\Omega)}{}+\norm{u}{L^2(\omega)}{}}\right)}.$$

The above result originates in the work of Phung~\cite{phung2003remarques} with subsequent improvements in the works of Bourgeois~\cite{bourgeois2010stability} and Bourgeois-Dardé~\cite{bourgeois-darde2010stability}. Moreover, it is proven by Bourgeois~\cite{bourgeois2017quantification} in the two dimensional setting that the logarithmic estimate is essentially optimal. For more details on the stability in the elliptic case see~\cite[Chapter 5]{rousseau2022elliptic} as well as~\cite{alessandrini2009stability}.

The loglog stability obtained in Theorem~\ref{thm_up_to_cone} may seem weak. Nevertheless, it is analogous to the elliptic case in the sense that stability deteriorates as one approaches the maximal domain. In the elliptic case, from Hölder stability in the interior of the domain one derives log stability in the whole domain, and both of the stabilities are optimal. In the wave case, we prove that log stability in the interior of the maximal domain yields loglog stability in the whole maximal domain. The former estimate is optimal, but it remains an open question whether this is the case for the latter one as well.

\bigskip

Finally, a particular motivation to study $\ct(\delta)$ comes from inverse problems. Unique continuation for waves is one of the main ingredients of the Boundary Control method. The Boundary Control method, originating in~\cite{Belishev:87}, can be used to solve the problem of recovering a metric or a potential from the Dirichlet-to-Neumann map (or variants of it) associated to the wave operator. In some geometric situations the Boundary Control method is the only known way to prove uniqueness. Moreover, quantitative unique continuation results can be used to obtain stability estimates in situations where Boundary Control method applies~\cite{bosi2017reconstruction, burago2024quantitativestabilitygelfandsinverse, FO25abstract}. However, a quantitative unique continuation result such as~\eqref{est_ll_intro} is not enough. One needs additionally to know how the constant degenerates when one approaches the maximal domain of unique continuation. This comes, roughly speaking, from the fact that Boundary Control method relies crucially on unique continuation \textit{exactly} up to the maximal domain that is allowed by finite speed propagation.

In the companion paper~\cite{FO25abstract} we establish an abstract link between optimal unique continuation and the stability of an inverse problem for the wave equation. Theorem~\ref{thm_up_to_cone} allows to prove a concrete stability estimate for the recovery of a potential on some compact region from its source to solution map when the observation takes place on a half-space. More precisely, we denote by $H$ the half-space $\{x_1>0\}$ in $ \R^n_x$. Let $q_{j} \in C^\infty (\R^n; \R), j\in\{1,2\}$ and let $K$ be a compact set of $\{x_1<0\}$. For a source $F \in C^\infty_0 ((0,T)\times H)$ we define the source to solution map $\Lambda_{q_j}$ associated to the potential $q_j$ by
$$
\Lambda_{q_j}(F)=u_{|(0,T) \times H},
$$
where $u$ is the solution of
\begin{equation}
\begin{cases}
    (\Box+q)u=F & \textnormal{in} \:(0,T) \times \R^n,\\
    u_{|t=0}=\p_tu_{|t=0}=0  & \textnormal{in} \: \R^n.
    \end{cases}
\end{equation}
In~\cite{FO25abstract} we use Theorem~\ref{thm_up_to_cone} to prove the following stability estimate.

\begin{thm}[Theorem 1.2 in \cite{FO25abstract}]
\label{thm_inverse_half}
Assume that the potentials $q_j$ satisfy $\norm{q_j}{C^m(\R^n)}{}\leq M, j \in \{1,2\}$ for some $M>0$ and $m >n$. Then there is $T_0$ depending only on $K$ such that for $T>T_0$ there exist $C>0$ and $\alpha \in (0,1)$ depending on $T,K,M,n$ with 
\begin{align*}
    \norm{q_2-q_1}{L^2(K)}{}\leq \frac{C}{\left(\log |\log \norm{\Lambda_1-\Lambda_2}{L^2\to L^2}{} |\right)^\alpha}.
\end{align*}
\end{thm}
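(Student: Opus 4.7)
The plan is to combine Theorem~\ref{thm_log_close_tocone} with the abstract reduction scheme developed in the companion paper [FO25abstract]: that paper precisely isolates the quantitative unique continuation constant $\ct(\delta)$ as the single geometric input needed to upgrade qualitative uniqueness into a quantitative stability estimate for the inverse problem at hand.

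The argument proceeds in three stages. First, a bilinear identity converts the discrepancy between the two source-to-solution maps into an integral of $q_2-q_1$ weighted by products of wave solutions: for $F_1, F_2 \in C^\infty_0((0,T)\times H)$ one has, schematically,
\begin{align}
\langle (\Lambda_{q_2}-\Lambda_{q_1}) F_1,\, F_2 \rangle = \int_{(0,T)\times \R^n} (q_1-q_2)\, u_1\, v_2,
\end{align}
where $u_1$ solves the forward problem for $q_1$ with source $F_1$ and $v_2$ the corresponding adjoint problem for $q_2$ with data built from $F_2$. Recovering $(q_2-q_1)|_K$ thus reduces to choosing sources so that the product $u_1 v_2$ approximates an arbitrary test function supported in $K$.

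Since $K \subset \{x_1<0\}$ while the sources are confined to $H=\{x_1>0\}$, realizing this concentration is an approximate controllability problem across the hyperplane $\{x_1=0\}$. By the HUM-type duality recalled in [FO25abstract, Section 2], the cost of approximating a target on $K$ up to precision $\ve$ is controlled by $\ct(\delta)\,e^{C/\ve}$, where $\delta$ measures the distance from $K$ to the boundary of the maximal domain of influence of $H$ in time $T$. The threshold $T_0$ is precisely the value making this distance positive; localizing the geometry around each point of $K$ so as to fit the framework of Theorem~\ref{thm_log_close_tocone} then supplies the explicit blow-up $\ct(\delta)=\delta^{-N/\delta^4}$.

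The final step is an optimization. Setting $\eta := \norm{\Lambda_1-\Lambda_2}{L^2\to L^2}{}$, one arrives at a reconstruction error of the form $C\ve + \eta\,\ct(\delta)\,e^{C/\ve}$. Balancing the two contributions and then optimizing over $\delta$ using the super-exponential profile $\ct(\delta)=\delta^{-N/\delta^4}$ produces the double logarithm $(\log|\log \eta|)^{-\alpha}$. The regularity assumption $m>n$ enters via Sobolev embedding, used to lift the $H^1$ estimates delivered by Theorem~\ref{thm_log_close_tocone} into pointwise control on $u_1 v_2$ and hence into $L^2$ control on $q_2-q_1$ over $K$. The principal difficulty is exactly this last balancing: it is the super-exponential blow-up of $\ct(\delta)$ — rather than any polynomial or single-exponential behavior — that forces the stability estimate to degrade from a single logarithm, typical of well-posed control problems, down to the double logarithm stated above.
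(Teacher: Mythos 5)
This statement is not proved in the present paper: it is quoted verbatim from the companion work [FO25abstract] (the theorem heading says so, and the text around it reads ``In~\cite{FO25abstract} we use Theorem~\ref{thm_log_close_tocone} to prove the following''). The only contribution of the present paper is the explicit bound $\ct(\delta)=\delta^{-N/\delta^4}$ of Theorem~\ref{thm_log_close_tocone}, which is fed as input into the abstract framework of [FO25abstract]. So there is no proof here to compare yours against, and I can only check your sketch for internal consistency and for alignment with the philosophy the paper advertises.

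On that level your sketch is broadly consistent but conflates two distinct recovery mechanisms. You describe an Alessandrini-type bilinear identity $\langle(\Lambda_{q_2}-\Lambda_{q_1})F_1,F_2\rangle = \int (q_1-q_2)u_1 v_2$ and then argue that recovery of $q_2-q_1$ reduces to making products $u_1 v_2$ dense. That is the standard pipeline for CGO-based proofs. The Boundary Control method, which the paper explicitly points to as its mechanism, proceeds differently: one uses a Blagovestchenskii-type identity to recover the \emph{inner products} of waves from the data, then uses approximate controllability up to the full domain of influence to recover the wave field inside $K$, and only then extracts the potential (say by hitting the recovered field with the operator). The super-exponential $\ct(\delta)$ enters as the cost of approximate control up to the optimal domain, not as a multiplier on a source concentration cost. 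Your balancing heuristic $C\ve + \eta\,\ct(\delta)\,e^{C/\ve}$ is therefore not quite the right shape; the actual balance (as one can see by analogy with the proof of Theorem~\ref{thm_up_to_cone} in this paper) is between the $\ct(\delta)/\log(1/\eta)$ term from Theorem~\ref{thm_log_close_tocone} and a geometric error $\delta^a$ from the region $\D\setminus\D_\delta$, optimized over $\delta$. Both routes do produce a double logarithm, and your explanation of \emph{why} it appears — the super-exponential profile of $\ct(\delta)$ forcing two successive logarithms — is the correct intuition. Finally, your remark that $m>n$ enters via Sobolev embedding to get pointwise control is plausible, but without [FO25abstract] in hand neither of us can confirm that this is where the hypothesis is actually used.
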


 \bigskip

   \begin{figure}
	\centering
	
	\begin{tikzpicture}
\draw (0,-2) -- (0,2) node[above] {$t$} ;
\draw (-5,0) -- (5,0) node[right] {$x$};

\draw (-4,0) -- (-2,1) ;
\draw (-2,1) -- (2,1) ;
\draw (2,1) -- (4,0) ;

\draw (4,0) -- (2,-1) ;
\draw (2,-1) -- (-2,-1) ;
\draw (-2,-1) -- (-4,0) ;


\fill[fill=orange!30,semitransparent] (-4+0,0) -- (-2,1-0)-- (-2,-1+0)--(-4+0,0) ;

\fill[fill=orange!30,semitransparent] (4+0,0) -- (2,1-0)-- (2,-1+0)-- (4+0,0) ;

\filldraw[blue!30, semitransparent]  (-2,-1) rectangle (2,1) ;
\draw [blue, thick] (-2,0) -- (2,0);

\node at (1.2,0.2) [blue] {$\omega$};

\node at (-3,1.5) [orange!60] {$\D$};

\filldraw[black] (0,1) circle (1pt) node[anchor=south east]{$s$};
\filldraw[black] (0,-1) circle (1pt) node[anchor=south east]{$-s$};

\end{tikzpicture}
\caption{A solution of the wave equation that vanishes on $(-s,s)\times \omega$ has to vanish in the diamond $\D$. Theorem~\ref{thm_up_to_cone} gives a stability estimate~\textit{in the whole diamond} in the case where $s=R/2$ and $\omega$ is a ball of radius $R$.}
\label{cone and delta}
\end{figure}
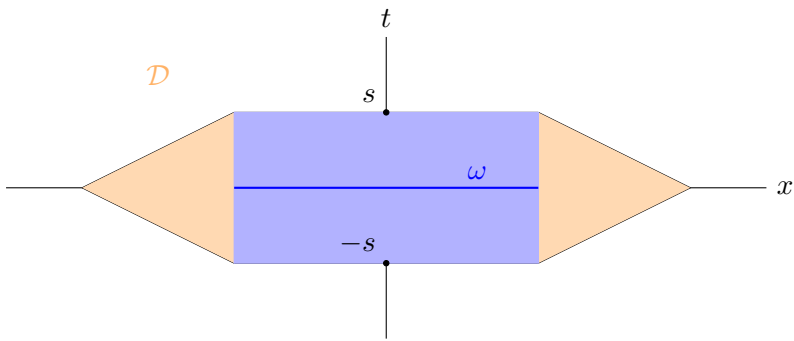

    \subsection{Idea of the proof and plan of the paper}

 Since the pioneering work of Carleman~\cite{carleman1939probleme}, Carleman estimates are one of the main tools in proving qualitative as well as quantitative uniqueness. These are weighted estimates of the form
 \begin{align}
     \norm{e^{\tau \phi }u}{L^2}{}\lesssim   \norm{e^{\tau \phi }Pu}{L^2}{},
 \end{align}
applied to compactly supported functions. The weight $e^{\tau \phi}$ allows to propagate uniqueness along level sets of $\phi$ in the direction where $\phi$ decreases by letting  $\tau \to \infty$. The key additional idea introduced by Tataru in~\cite{Tataru:95} (following the introduction of an FBI transform in~\cite{Robbiano:91}) is to replace $e^{\tau \phi}$ by $e^{\tau \phi} \pp$ where the weight $\pp$ is a Fourier multiplier which essentially localizes in a low frequency regime. The introduction of this non local weight allows one to obtain unique continuation in optimal time for the wave equation. At the same time, it makes the proof and application of Carleman estimates much more intricate. 

For the problem under consideration here, we need a Carleman estimate containing this Gaussian weight but we additionally need~\textit{an explicit dependence of the constants} with respect to the geometry of the problem. More precisely, the crucial parameter here is the distance $\delta$ to the optimal domain. Since the proofs of the Carleman estimates in~\cite{Tataru:95,tataru1999unique, RZ:98, Hor:97} are based on microlocal arguments, determining explicit constants in this general setting would be extremely tedious. Moreover, all the proofs are based on local estimates close to a point, that need to be iterated in order to cover the whole domain. This makes the tracking of the constants even more difficult. 

In order to address those problems, we asked the following question: is it possible, at least in some specific geometric situation, to prove a~\textit{global} Carleman estimate adapted to the optimal unique continuation for the wave operator? That means that this estimate should contain the non local weight $\pp$. It turns out, that we are able to do that in the geometry of Figure~\ref{cone and delta}. Our global Carleman estimate is based on explicit calculations carried out on the physical space (and not on the space of symbols).

Heuristically, we expect that the more one needs to iterate an estimate, the more the constants degenerate. As a consequence, having a global Carleman estimate is a good starting point for obtaining good constants. Ideally, one can even hope that no iteration is needed in this case and that we only need to apply the Carleman estimate once. This would be indeed the case if we had a classical Carleman estimate. As we prove in this paper, the presence of the non local weight $\pp$ makes this impossible and essentially forces us to iterate our Carleman estimate. This eventually gives the exponential blow up of $\ct(\delta)$.

Once we have obtained the Carleman estimate we use the propagation techniques of~\cite{Laurent_2018} to quantify the uniqueness property. The new feature is that we need to keep track of the dependency with respect to $\delta$. While the propagation is quite technical, the overall proof is transparent enough to allow us to understand the bottlenecks that lead to the precise blow up of $\ct(\delta)$. See as well Remarks~\ref{rem_espilon_depends_ongamma}, \ref{remark_dependancy on g} and \ref{rem_numberofiterations}.

\bigskip

The plan of the paper is as follows. In Section~\ref{sec_carleman} we prove the global Carleman estimate, that is Theorem~\ref{thm_explicit_carleman}. In Section~\ref{sec_local} we combine the Carleman estimate with the techniques of~\cite{Laurent_2018} to obtain an explicit quantitative estimate that can be iterated. In Section~\ref{sec_iteration} we iterate the quantitative estimate following the level sets of the foliating function $\phi$. Finally, in Section~\ref{sec_applications} we prove all the main results that we stated in the introduction of the paper. Appendix~\ref{sec_append} collects several lemmata that are used throughout the paper.   

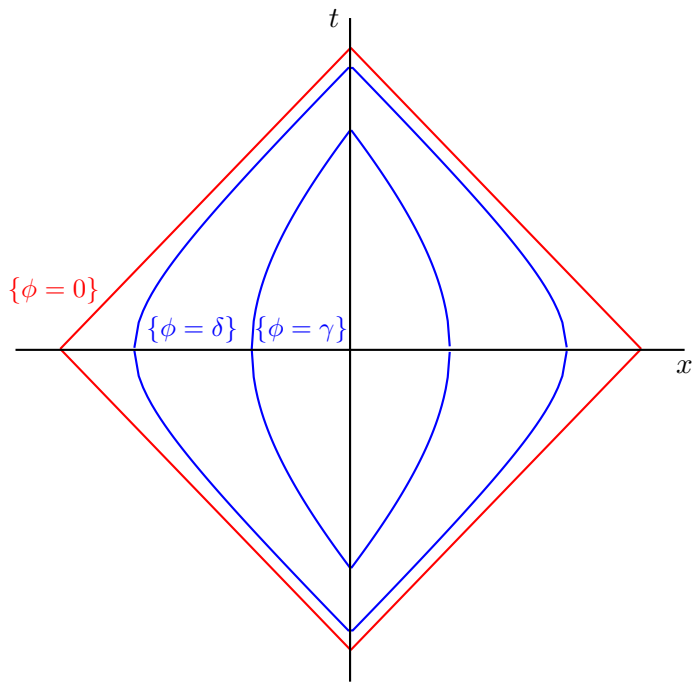
\begin{figure}
    \centering

    \begin{tikzpicture}
\begin{axis}[
    axis lines=none, 
    domain=-0.5:0.5, 
    samples=100,
    width=12cm, 
    height=12cm, 
    axis line style={-}, 
    ymin=-0.8, ymax=0.8, 
    xtick=\empty, 
    ytick=\empty, 
    xlabel={}, 
    ylabel={}, 
    axis equal, 
    restrict y to domain=-1.5:1.5, 
    enlarge x limits={abs=0.5}, 
    xmin=-1.5, xmax=1.5, 
]

    \addplot[blue, thick] {sqrt((3/2 - abs(x))^2 - 1)}; 
    \addplot[blue, thick] {-sqrt((3/2 - abs(x))^2 - 1)}; 
\end{axis}

\begin{axis}[
    axis lines=none, 
    domain=-1.0917:1.0917, 
    samples=100,
    width=12cm, 
    height=12cm, 
    axis line style={-}, 
    ymin=-0.8, ymax=0.8, 
    xtick=\empty, 
    ytick=\empty, 
    xlabel={}, 
    ylabel={}, 
    axis equal, 
    restrict y to domain=-1.5:1.5, 
    enlarge x limits={abs=0.5}, 
    xmin=-1.5, xmax=1.5, 
]

    \addplot[blue, thick] {sqrt((3/2 - abs(x))^2 - 1/6)}; 
    \addplot[blue, thick] {-sqrt((3/2 - abs(x))^2 - 1/6) }; 

     \node at (axis cs:-0.25, 0.1) { \textcolor{blue}{\small $\{\phi = \gamma\}$}}; 
      \node at (axis cs:-0.8, 0.1) { \textcolor{blue}{\small $\{\phi =\delta \}$}};

       \node at (axis cs:-1.5, 0.3) { \textcolor{red}{\small $\{\phi = 0\}$}};
    \end{axis}

\begin{axis}[
    axis lines=none,
    xmin=-1.5, xmax=1.5,
    ymin=-1.5, ymax=1.5,
    width=12cm, height=12cm,
    axis equal,
]
    \addplot[red, thick, domain=0:1, samples=2] coordinates {(1.1,0) (0,1.15)};
    \addplot[red, thick, domain=0:1, samples=2] coordinates {(0,1.15) (-1.1,0)};
    \addplot[red, thick, domain=0:1, samples=2] coordinates {(-1.1,0) (0,-1.15)};
    \addplot[red, thick, domain=0:1, samples=2] coordinates {(0,-1.15) (1.1,0)};
\end{axis}

\def\xLength{4.4}  
\def\yLength{4.4}  

\def\xOffset{5.2}  
\def\yOffset{5.2}  

\draw[thick] (-\xLength + \xOffset, \yOffset) -- (\xLength + \xOffset, \yOffset)  node[anchor=north] {$x$};

\draw[thick] (\xOffset, -\yLength + \yOffset) -- (\xOffset, \yLength + \yOffset)  node[anchor=east] {$t$} ;

\end{tikzpicture}

    \caption{The level sets of the foliating function $\phi$, which approach the boundary of the cone $\{\phi>0\}$ as $\delta$ goes to $0$.  }
    \label{fig_foliation}
\end{figure}

\section{The global Carleman estimate}
\label{sec_carleman}

\subsection{A ``Carleman equality''}

	Let $(M,g)$ be a semi-Riemannian manifold. 
	We write $(X,Y) = g(X,Y)$ for the scalar product of two vector fields $X$ and $Y$ on $M$. The associated quadratic form is denoted by $G(X) = g(X,X)$. We will need the following elementary formula for 
	functions $u$, $v$ and a vector field $X$ on $M$,
	\begin{align}\label{hessian}
		X (\nabla u, \nabla v) = D^2 u(X, \nabla v) + D^2 v(X, \nabla u), 
	\end{align}
	where $D^2$ stands for the Hessian.

	\begin{lem}
		\label{lem_carleman_eq}
		Let $\ell, \sigma, v \in C^2(M)$. Then
		\begin{align}
			|e^\ell \Delta (e^{-\ell} v)|^2/2 
			&= Q_+(\nabla v) + Q_-(\nabla \ell) v^2
			+ \div B + R
			\\&\qquad
			+ |(\Delta + q + \sigma) v|^2/2 
			+ |(L + \sigma) v|^2/2,
		\end{align}
		where, writing $a = \sigma - \Delta \ell$, the quadratic forms $Q_\pm$ are defined by 
		\begin{align}
			Q_\pm(X) = 2 D^2 \ell(X,X) \pm ag(X,X),
		\end{align}
		and the following notation is used $q = G(\nabla \ell) - \Delta \ell$, $L = 2 \nabla \ell$,
		\begin{align}
			B 
			&= 
			-((L + \sigma) v)\nabla v 
			+ (G(\nabla v) - (q + \sigma) v^2)\nabla \ell 
			+ v^2 \nabla \sigma/2,
			\\
			R 
			&=
			(\div(a \nabla \ell) - a \sigma - \Delta \sigma/2)v^2.
		\end{align}
	\end{lem}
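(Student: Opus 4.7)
The plan is to write $e^\ell \Delta(e^{-\ell} v)$ as $Sv - Av$, where $S = \Delta + q + \sigma$ and $A = L + \sigma$ with $q = G(\nabla \ell) - \Delta \ell$ and $Lv = 2(\nabla \ell, \nabla v)$. The chain rule gives directly
\begin{align}
e^\ell \Delta(e^{-\ell} v) = \Delta v - 2(\nabla \ell, \nabla v) + (G(\nabla \ell) - \Delta \ell) v = Sv - Av,
\end{align}
so expanding the square yields $\tfrac{1}{2}|e^\ell \Delta(e^{-\ell} v)|^2 = \tfrac{1}{2}|Sv|^2 + \tfrac{1}{2}|Av|^2 - (Sv)(Av)$, and the whole task reduces to rewriting the pointwise cross term $-(Sv)(Av)$ as $Q_+(\nabla v) + Q_-(\nabla \ell) v^2 + \div B + R$.

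To treat the principal contribution I set $w = Av$ and use the Leibniz identity $(\Delta v) w = \div(w \nabla v) - (\nabla v, \nabla w)$. Expanding $\nabla w = 2 \nabla(\nabla \ell, \nabla v) + \sigma \nabla v + v \nabla \sigma$ and pairing with $\nabla v$, formula \eqref{hessian} (applied with $u = \ell$, and then with $u = v$, $X = \nabla \ell$) gives
\begin{align}
(\nabla v, \nabla w) = 2 D^2 \ell(\nabla v, \nabla v) + \nabla \ell\bigl(G(\nabla v)\bigr) + \sigma G(\nabla v) + v(\nabla v, \nabla \sigma).
\end{align}
The middle piece $\nabla \ell(G(\nabla v)) = (\nabla \ell, \nabla G(\nabla v))$ is rewritten as $\div(G(\nabla v) \nabla \ell) - \Delta \ell\, G(\nabla v)$; the surviving bulk $-\Delta \ell\, G(\nabla v) + \sigma G(\nabla v) = a\,G(\nabla v)$ combines with $2 D^2 \ell(\nabla v, \nabla v)$ to form exactly $Q_+(\nabla v)$. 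Similarly $v(\nabla v, \nabla \sigma) = \tfrac{1}{2}(\nabla v^2, \nabla \sigma)$ becomes a divergence modulo the bulk $-\tfrac{1}{2} v^2 \Delta \sigma$, which will feed into $R$.

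The zero-order piece $(q+\sigma) v w$ is handled using $2 v (\nabla \ell, \nabla v) = (\nabla \ell, \nabla v^2)$ and one more Leibniz step, turning $(q+\sigma)(\nabla \ell, \nabla v^2)$ into $\div((q+\sigma) v^2 \nabla \ell) - v^2 (\nabla(q+\sigma), \nabla \ell) - v^2 (q+\sigma) \Delta \ell$. Collecting all the divergences produced so far assembles precisely into $\div B$, leaving a residual coefficient of $v^2$ equal to $(\nabla(q+\sigma), \nabla \ell) - a(q+\sigma) - \tfrac{1}{2} \Delta \sigma$. To match this with the coefficient of $v^2$ in $Q_-(\nabla \ell) v^2 + R$, I apply \eqref{hessian} one last time with $u = \ell$ and $X = \nabla \ell$, which gives $(\nabla G(\nabla \ell), \nabla \ell) = 2 D^2 \ell(\nabla \ell, \nabla \ell)$ and hence $(\nabla q, \nabla \ell) = 2 D^2 \ell(\nabla \ell, \nabla \ell) - (\nabla \Delta \ell, \nabla \ell)$; expanding $\div(a \nabla \ell) = (\nabla a, \nabla \ell) + a \Delta \ell$ with $a = \sigma - \Delta \ell$ then reorganises the remaining bulk into $Q_-(\nabla \ell) v^2 + R$ exactly. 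The main obstacle is purely algebraic bookkeeping: the cross term generates over a dozen intermediate expressions, and recognising both $Q_+$ and $Q_-$ relies on applying \eqref{hessian} twice, once to $v$ to extract the Hessian form in $\nabla v$, and once to $\ell$ to extract the Hessian form in $\nabla \ell$.
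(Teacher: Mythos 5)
Your proof is correct and takes essentially the same route as the paper's: write $e^\ell\Delta(e^{-\ell}v) = Sv - Av$ with $S = \Delta+q+\sigma$, $A = L+\sigma$, expand the square, and rewrite the cross term using Leibniz identities for divergences together with \eqref{hessian} applied twice (once with $X=\nabla v$ to isolate $2D^2\ell(\nabla v,\nabla v)$, once with $X=\nabla\ell$ to convert $(\nabla G(\nabla\ell),\nabla\ell)$ into $2D^2\ell(\nabla\ell,\nabla\ell)$). The only difference is cosmetic: the paper splits $q+\sigma = G(\nabla\ell)+a$ before integrating by parts and collects $B_1,R_1,B_2,R_2$ separately, whereas you carry $q+\sigma$ intact and reorganize the residual $v^2$ coefficient at the end, but the algebraic content is identical.
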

	\begin{proof}
		We have
		\begin{align}\label{expansion}
			e^\ell \Delta (e^{-\ell} v)
			= 
			(\Delta -L + q)v.
		\end{align}
		Indeed, using 
		\begin{align}
			e^\ell \nabla (e^{-\ell} v) = \nabla v - v \nabla \ell, 
			\quad 
			e^\ell \div (e^{-\ell} X) = \div X - (\nabla \ell, X),
		\end{align}
		we obtain
		\begin{align}
			e^\ell \Delta (e^{-\ell} v)
			&= 
			e^\ell \div (e^{-\ell} (\nabla v - v \nabla \ell))
			\\&= 
			\Delta v 
			- \div(v \nabla \ell) 
			- (\nabla \ell, \nabla v)
			+ (\nabla \ell, \nabla \ell) v
			\\&=
			\label{terms_expansion}
			\Delta v 
			- 2(\nabla \ell, \nabla v)
			- v \Delta \ell
			+ (\nabla \ell, \nabla \ell) v.
		\end{align}
		Thus \eqref{expansion} holds, and
		\begin{align}\label{squared}
			|e^\ell \Delta (e^{-\ell} v)|^2/2
			&= 
			|(\Delta + q + \sigma) v - (L + \sigma) v|^2/2
			\\\notag&= 
			|(\Delta + q + \sigma) v|^2/2 
			+ |(L + \sigma) v|^2/2,
			\\\notag&\qquad 
			- \Delta v(L + \sigma) v
			- (q + \sigma) v (L + \sigma) v.
		\end{align}
		Let us study the last two terms in \eqref{squared}, that we call the cross terms. 
		
		For the first cross term we have
		\begin{align}
			- \Delta v(L + \sigma) v
			= 
			- \div(((L + \sigma) v)\nabla v) + \nabla v(L v) + \nabla v(\sigma v).
		\end{align}
		Recalling that $Lv = 2 (\nabla \ell, \nabla v)$ and using \eqref{hessian} with $X = \nabla v$, 
		we have
		\begin{align}
			\nabla v(Lv) = 2 D^2 \ell(\nabla v, \nabla v) + 2 D^2 v(\nabla v, \nabla \ell).
		\end{align}
		On the other hand, using \eqref{hessian} with $X = \nabla \ell$, we have
		\begin{align}\label{Hess_expansion}
			2 D^2 v(\nabla \ell, \nabla v) = \nabla \ell (\nabla v, \nabla v)
			= \div(G(\nabla v)\nabla \ell) - G(\nabla v) \Delta \ell.
		\end{align}
		Finally,
		\begin{align}
			\nabla v(\sigma v) 
			= 
			\frac12 \nabla \sigma(v^2) + \sigma G(\nabla v)
			=
			\frac12 \div(v^2 \nabla \sigma) - \frac12 v^2 \Delta \sigma + \sigma G(\nabla v).
		\end{align}
		We collect the above and obtain
		\begin{align}
			- \Delta v(L + \sigma) v
			= Q_+(\nabla v) + \div B_1 + R_1,
		\end{align}
		where
		\begin{align}
			B_1 
			= 
			-((L + \sigma) v)\nabla v 
			+ G(\nabla v) \nabla \ell 
			+ \frac12 v^2 \nabla \sigma,
			\quad
			R_1 
			= 
			-\frac12 (\Delta \sigma) v^2.
		\end{align}
		
		We write $\tilde q = q + \sigma = G(\nabla \ell) + a$, and have for the second cross term
		\begin{align}
			- \tilde q v (L + \sigma) v 
			&= 
			-\frac12 \tilde q L(v^2)
			- \sigma \tilde q v^2
			\\&=
			-\frac12 \div(v^2 \tilde q L) 
			+ \frac12 \div(\tilde q L) v^2
			- \sigma \tilde q v^2
			\\&=
			(\div (G(\nabla \ell) \nabla \ell) 
			- \sigma G(\nabla \ell)) v^2 
			+ \div B_2 + R_2,
		\end{align}
		where
		\begin{align}
			B_2 = - v^2 \tilde q \nabla \ell, 
			\quad 
			R_2 = (\div(a \nabla \ell) - a \sigma)v^2.
		\end{align}
		Using \eqref{Hess_expansion} with $v = \ell$ gives
		\begin{align}
			\div (G(\nabla \ell) \nabla \ell) - \sigma G(\nabla \ell)
			=
			2 D^2 \ell(\nabla \ell, \nabla \ell) -a G(\nabla \ell) = Q_-(\nabla \ell).
		\end{align}
		The claim follows by observing that $B = B_1 + B_2$ and $R = R_1 + R_2$.
	\end{proof}

	\subsection{The subelliptic estimate}

	We now focus on the explicit situation where $M = \R^{1+n}$ and $g = -dt^2 + dx^2$ is the Minkowski metric on $M$. We write $\Box = \Delta$. We use the polar coordinates $(r,\theta)$ on the spatial factor $\R^n$. For some $r_0>1$ fixed we consider $\rtt \in [\frac{1}{r_0}, r_0]$. We consider as well $\ell = \tau \phi$ where 
	\begin{align*}
		2 \phi = -t^2 + 2 \rho = -t^2 + (r - \rd)^2,
	\end{align*}
	will be the foliating function and $\rd > 0$ to be determined. We start by some calculations that are needed in order to compute the important quantities $Q_\pm$ of Lemma~\ref{lem_carleman_eq}.

    \bigskip

    \textbf{Notation:} \textit{In the sequel we shall use the notational convention of writing $C,c>0 $ for positive constants that only depend on $r_0$ and $n$. Their value may change from line to line.}

	\subsubsection{The unperturbed operator}

	Consider spherical coordinates $(r, \theta) \in (0,+\infty) \times S^{n-1}$ on $\R^n$. Then, writing
$d \theta^2$ for the canonical metric on the unit sphere $S^{n-1}$, the Euclidean metric is
given by
   \begin{align*}
		dx^2 = dr^2 + r^2 d\theta^2.
	\end{align*}
	For a radial function $f(r)$ there holds (see for instance~\cite[p.99]{petersen2006riemannian})
	\begin{align*}
		D^2 f = f'' dr^2 + r f' d\theta^2.
	\end{align*}
	Moreover, $df = f' dr$ and the gradient is $\nabla f = f' \p_r$ due to the simplicity of the tangent-cotangent isomorphism in this case. 
	Hence 
	\begin{align*}
		D^2 f(\nabla f, \nabla f) 
		= 
		|f'|^2 f''.
	\end{align*}
	In particular, if $\rd \in \R$ and $\rho = (r - \rd)^2 / 2$, then 
	\begin{align*}
		\rho' = r - \rd, 
		\quad
		\rho'' = 1,
	\end{align*}
	and
	\begin{align}
		D^2 \rho = dr^2 + r(r-\rd)d\theta^2 = dx^2 - \rd r d\theta^2.
	\end{align}
	This gives,
	\begin{align*}
		D^2 \rho(\nabla \rho, \nabla \rho) 
		= 
		(r - \rd)^2 
		= 2 \rho.
	\end{align*}
	Then 
	\begin{align*}
		d\phi = -t dt + d\rho,
		\quad
		\nabla \phi = t \p_t + \nabla \rho,
	\end{align*}
	and
	\begin{align}
		\label{calc_gradphi}
		(\nabla \phi, \nabla \phi) 
		=
		-t^2 + |\nabla \rho|^2 
		=
		-t^2 + (r - \rd)^2 = 2 \phi.
	\end{align}
	Moreover, 
	\begin{align*}
		D^2 \phi
		=
		- dt^2 + D^2 \rho
		=
		g - \rd r d\theta^2.
	\end{align*}
	In particular, 
	\begin{align*}
		D^2 \phi(\nabla \phi, \nabla \phi)
		=
		-t^2 + D^2 \rho(\nabla \rho, \nabla \rho) 
		= 
		-t^2 + 2 \rho
		=
		2 \phi.
	\end{align*}
	
	Let us now compute the important quantities in Lemma \ref{lem_carleman_eq}. For a vector $X \in T(\R^{1+n})$ we write $X=X^t \frac{\p}{\p_t}+X^r \frac{\p}{\p_r}+X^\theta $, $X^\theta \in T(S^{n-1})$. We have
	\begin{align}
		\label{eq_for_qp}
		Q_+(X, X)&=
		a (X, X) 
		+ 2 D^2 \ell(X, X) \\
		&=a g(X,X)+2\tau D^2\phi(X,X)=ag(X,X)+2\tau g(X,X)-2 \tau \rd r |X^\theta|^2
		\\&=
		-(a + 2 \tau) |X^t |^2 
		+ (a + 2 \tau) \left(|X^r|^2 + r^2 |X^\theta|^2 \right)
		- 2 \tau \rd r |X^\theta|^2,
	\end{align}
	and
	\begin{align}
		Q_-(\nabla \ell )&=-a(\nabla \ell, \nabla \ell) 
		+ 2 D^2 \ell (\nabla \ell, \nabla \ell)=2 \phi (-a \tau^2 + 2 \tau^3).
	\end{align}
	We need to impose 
	\begin{align*}
		(a + 2 \tau) r^2 - 2 \tau \rd r > 0, 
		\quad 
		-a + 2\tau > 0.
	\end{align*}
	We choose 
	$a = 3 \tau/2$. Then the second inequality holds, and the first one is equivalent with 
	\begin{align*}
		\frac{7 r}{4} - \rd > 0.
	\end{align*}
We define
\begin{align}
\label{def_of_rzero}
\tilde{r}_0=\frac{13}{14}\rtt    
\end{align}
and choose
	\begin{align}
		\label{choice_of_R}
		\rd = \frac{3 \rtt}{2}.
	\end{align}
With these choices, going back to~\eqref{eq_for_qp} we get that for  $r\geq \rtt_0$
	\begin{align}
		Q_+(X, X)+\frac{7}{2}\tau |X^t|^2
		&= \frac{7}{2}\tau |X^r|^2+\left(\frac{7}{2}\tau r^2-\frac{6}{2} \tau r\rtt \right )|X^{\theta} |^2\\
		&\geq  \frac{7}{2}\tau |X^r|^2+\left(\frac{7}{2}\tau r^2-\frac{6 }{ 2} \cdot \frac{14}{13}\tau r^2 \right )|X^{\theta} |^2 \\
		&\geq\frac{7}{26}\tau (|X^r|^2+ r^2 |X^\theta|^2 ),
	\end{align}
	which implies, for some $C>0$
	\begin{align}
		Q_+(X, X)+\tau |X^t|^2 \geq C^{-1} \tau |X|^2.
	\end{align}
	The length of $X$ above is defined with respect to the Euclidean metric on $\R^{1+n}$.
	We have thus obtained the following key estimates, for $v$ supported in $r\geq \rtt_0$:
	\begin{align}
		\label{pos_for_qp}
		Q_+(\nabla v, \nabla v)+\tau |\p_t v|^2 &\geq C^{-1} \tau |\nabla v |^2,
	\end{align}
	and
	\begin{align}
		\label{pos_for_qm}
		Q_-(\nabla \ell )&=-a(\nabla \ell, \nabla \ell) 
		+ 2 D^2 \ell (\nabla \ell, \nabla \ell)
		\\&\quad=
		2 \phi (-a \tau^2 + 2 \tau^3)=\phi \tau^3.
	\end{align}

	\subsubsection{Perturbation with the Gaussian weight}
	
	We need to understand how the weight $\pp$ affects the equality of Lemma~\ref{lem_carleman_eq}. Formally, we need to calculate 
    $\pp e^{\ell} \Box e^{-\ell} e^{\frac{\ve D^2_t}{2 \tau}}$. However $e^{\frac{\ve D^2_t}{2 \tau}}$ is not well defined even for compactly supported functions. We notice that the conjugated operator $\Box_\ell=e^\ell \Box e^{-\ell}$ is characterized by $e^\ell \Box=\Box_\ell e^\ell$. We want then to find $\dle$ such that $\pp e^\ell \Box w=\dle \pp e^\ell w$ or equivalently 
	$$
	\pp \Box_\ell v= \dle \pp  v,
	$$
	after the change $w=e^\ell v$. We want to commute $\pp$ with the terms in~\eqref{terms_expansion}. We remark that $\pp$ commutes with the coefficients of the Minkowski metric. For the terms depending on $t$ we need the following (see for instance~\cite[Lemma 3.12]{LL:23notes}).
	\begin{lem}
		\label{conjugation_with_mult}
		Let $u \in \mathscr{S}(\R^{1+n})$. Then
		$$
		\pp (tu)=\left(t+ \ve \frac{\p_t}{\tau}\right)\pp u.
		$$
	\end{lem}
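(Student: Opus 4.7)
The plan is to prove the identity by a direct Fourier calculation in the $t$ variable, using the hypothesis $u \in \mathscr{S}(\R^{1+n})$ to make every manipulation rigorous. Let $\xi$ denote the variable dual to $t$ and let $\hat u(\xi, x)$ denote the partial Fourier transform of $u$ in $t$. With the convention $\widehat{\p_t f}(\xi) = i \xi \hat f(\xi)$ one has $\widehat{tu}(\xi, x) = i \p_\xi \hat u(\xi, x)$, and on the Fourier side the operator $\pp = e^{-\ve D_t^2/(2\tau)}$ acts as multiplication by the Gaussian symbol $m(\xi) = e^{-\ve \xi^2/(2\tau)}$.

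The proof then reduces to a one-line product rule computation. Using $m'(\xi) = -(\ve \xi / \tau)\, m(\xi)$, I would write
$$
m(\xi)\, i \p_\xi \hat u(\xi, x)
= i \p_\xi \bigl( m(\xi)\, \hat u(\xi, x) \bigr)
+ \frac{\ve}{\tau}\, i \xi\, m(\xi)\, \hat u(\xi, x).
$$
Taking an inverse partial Fourier transform in $\xi$, the left-hand side becomes $\pp(tu)$; the first term on the right becomes $t\, \pp u$, since $i \p_\xi$ is the Fourier representation of multiplication by $t$; and in the second term the factor $i \xi$ is the symbol of $\p_t$, so it produces $(\ve/\tau) \p_t \pp u$. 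Collecting everything yields the claimed identity.

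There is no genuine obstacle here: the Schwartz regularity of $u$ guarantees that $\hat u(\cdot, x)$ lies in $\mathscr{S}(\R_\xi)$ for each $x$, that $m$ and $m'$ are rapidly decaying in $\xi$, and hence that the product rule and the exchange of $i \p_\xi$ with inverse Fourier transform are fully justified rather than merely formal. The formula is equivalent to the commutator identity $[\pp, t] = (\ve/\tau)\, \p_t \pp$, which is the special case $m(\xi) = e^{-\ve \xi^2/(2\tau)}$ of the general relation $[m(D_t), t] = -i\, m'(D_t)$ for smooth Fourier multipliers.
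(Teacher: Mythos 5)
Your proof is correct and is the standard one: passing to the partial Fourier transform in $t$, expressing $\pp$ as multiplication by $m(\xi)=e^{-\ve\xi^2/(2\tau)}$, and using the product rule together with $m'(\xi)=-(\ve\xi/\tau)m(\xi)$ gives the identity directly, and the Schwartz hypothesis makes every step rigorous. The paper does not prove the lemma itself but cites~\cite[Lemma 3.12]{LL:23notes}; your Fourier-side computation, equivalent to the commutator formula $[m(D_t),t]=-i\,m'(D_t)$, is precisely the expected argument.
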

	
	Even though we deal here with a very explicit situation where the weight function $\phi$ has a particularly simple form, it turns out that the perturbation argument is somewhat easier to write in a more abstract way.

	We use Lemma~\ref{conjugation_with_mult} and~\eqref{expansion} to find
	\begin{align}
		\label{terms_conj_with_weight}
		&\pp \Box_\ell v \\
		&= \Box  \pp v - 2 \pp (\nabla \ell, \nabla v) -  \Box \ell \pp v  + \pp (\nabla \ell, \nabla \ell) v.
	\end{align}
	We used the time independence of the metric for the first term and the fact that $\Box \ell$ is independent of $t$. We need to treat the second and fourth term in~\eqref{terms_conj_with_weight}. We write $\p_j \ell=f_1(x)+tf_0$  with $f_0$ a constant. We use Lemma~\ref{conjugation_with_mult} to get
	\begin{align*}
		\pp \p_j \ell  w&= \pp (f_1(x)+f_0)w= f_1(x)\pp w+f_0 \pp t w\\
		&=f_1(x)\pp w +f_0\left(t+\frac{\ve \p_t}{\tau}\right)\pp w  \\
		&=\left(f_1(x)+f_0t+f_0\frac{\ve \p_t}{\tau} \right) \pp w= \p_j \ell \pp w+\frac{\ve}{\tau} \p_t \p_j\ell \p_t \pp w.
	\end{align*}
	Using that the metric is time independent we find then 
	\begin{align}
		\label{conjug_fst_term}
		\pp (\nabla \ell, \nabla v)=(\nabla \ell, \nabla \pp  v)+\frac{\ve}{\tau} \left(\nabla \p_t \ell, \nabla \p_t \pp v  \right).
	\end{align}
	We proceed similarly to find for the other term
	\begin{align}
		\label{conjug_scd_term}
		\pp (\nabla \ell, \nabla \ell) v&=(\nabla \ell, \nabla \ell) \pp v+\frac{2 \ve}{\tau} \left(\nabla \p_t \ell, \nabla \ell\right) \p_t \pp v  \nonumber \\
		& \hspace{4mm}+\frac{\ve^2}{\tau^2} \left( \nabla \p_t \ell, \nabla \p_t \ell \right)\p^2_t \pp v+ \frac{\ve}{\tau} \left( \nabla \p_t \ell, \nabla \p_t \ell\right) \pp v.
	\end{align}
	Putting together~\eqref{terms_conj_with_weight},\eqref{conjug_fst_term} and~\eqref{conjug_scd_term} yields
	\begin{align}
		\dle=\Box_\ell -2A_1+A_2+A_3+A_4,
	\end{align}
	with
	\begin{align}
		A_1 v= \ve \p^2_tv, \quad 
		A_2 v = -2\ve \tau  t \p_t v , \quad 
		A_3 v =- \ve^2 \p^2_t v, \quad 
		A_4 v=-\ve \tau v.
	\end{align}
	Above we used the explicit form of $\phi$. We now have all the ingredients in order to prove the main subelliptic estimate.

	\begin{prop}
		\label{prop_subell}
		There is $C, \ve_0>0$ depending on $r_0,n$ only such that for any $\g>0$ one has
		\begin{align}
			\g \tau^3 \int  |v|^2 dtdx + \tau \int |\nabla v|^2dtdx
			\leq C \left(  \int |\dle v|^2 dtdx+  C\tau \int |\p_t v|^2 dtdx \right),
		\end{align}
		
		for all $\tau \geq \frac{C}{\gamma}$, all $\ve \leq \gamma \ve_0$ and all $v \in C^\infty_0(\R^{1+n})$ satisfying $$\supp(v) \subset  \{r\geq \rtt_0\} \cap \{ \phi \geq \g\}.$$
	\end{prop}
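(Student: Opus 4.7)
The plan is to combine the Carleman identity of Lemma~\ref{lem_carleman_eq} with the positivity estimates~\eqref{pos_for_qp}--\eqref{pos_for_qm} to obtain a subelliptic estimate for the conjugated wave operator $\Box_\ell$, and then to absorb the Gaussian perturbation $\Box_\ell-\dle = 2A_1 - A_2 - A_3 - A_4$.

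First, I apply Lemma~\ref{lem_carleman_eq} to $v$ with $\ell=\tau\phi$, $a=3\tau/2$, and $\sigma=a+\Box\ell$, and integrate over $\R^{1+n}$. The divergence $\div B$ vanishes by the compact support of $v$, leaving the identity
\begin{align*}
\tfrac{1}{2}\int |\Box_\ell v|^2 = \int \big[Q_+(\nabla v) + Q_-(\nabla\ell) v^2 + R\big] + \tfrac{1}{2}\int |(\Box+q+\sigma) v|^2 + \tfrac{1}{2}\int |(L+\sigma)v|^2.
\end{align*}
I keep the two squared ``reservoirs'' on the left (rather than dropping them, as one does in standard Carleman arguments) because they will be crucial in the perturbation step. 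On the support of $v$, the positivity bounds~\eqref{pos_for_qp}--\eqref{pos_for_qm} give $Q_+(\nabla v)\geq c\tau|\nabla v|^2-\tau|\p_tv|^2$ and $Q_-(\nabla\ell)=\tau^3\phi\geq\gamma\tau^3$, while a direct computation shows $|R|\lesssim\tau^2 v^2$, which is absorbed by $\gamma\tau^3 v^2$ once $\tau\geq C/\gamma$. This yields the unperturbed subelliptic estimate
\begin{align*}
c\gamma\tau^3\!\int v^2 + c\tau\!\int |\nabla v|^2 + \tfrac{1}{2}\!\int |(\Box+q+\sigma) v|^2 + \tfrac{1}{2}\!\int |(L+\sigma)v|^2 \leq \tfrac{1}{2}\!\int |\Box_\ell v|^2 + C\tau\!\int |\p_t v|^2.
\end{align*}

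Next, I insert the perturbation identity. Using $\Box_\ell v=\dle v+(2A_1-A_2-A_3-A_4)v$ together with $(a+b)^2\leq 2a^2+2b^2$, and exploiting that $|t|\leq r_0/2$ on $\supp v$, I obtain
\begin{align*}
\int |\Box_\ell v|^2 \leq 2\int |\dle v|^2 + C\int\big(\ve^2|\p_t^2 v|^2 + \ve^2\tau^2|\p_t v|^2 + \ve^2\tau^2 v^2\big),
\end{align*}
where I grouped together the $\ve^2$ and $\ve^4$ contributions from $A_1$ and $A_3$.

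The main obstacle is to absorb the three perturbation terms into the reservoirs already present on the left. The $L^2$ term $\ve^2\tau^2 v^2$ is absorbed by $\gamma\tau^3 v^2$ provided $\ve^2\leq\gamma\tau/(2C)$, which follows from $\ve\leq\gamma\ve_0$ and $\tau\geq C/\gamma$ for $\ve_0$ small. The first-order term $\ve^2\tau^2|\p_t v|^2$ is absorbed by the reservoir $|(L+\sigma)v|^2$, since $L=2\tau\nabla\phi$ carries the time component $-2\tau t\,\p_t v$; its spatial component is controlled by the $\tau\int |\nabla v|^2$ already on the left. The delicate step is the second-order term $\ve^2 |\p_t^2 v|^2$: it has no counterpart among $\tau|\nabla v|^2$ or $\gamma\tau^3 v^2$, and must be absorbed by $|(\Box+q+\sigma)v|^2$, using that $(\Box+q+\sigma)v=\p_t^2 v-\Delta_x v+(q+\sigma)v$ carries $\p_t^2 v$ as its principal part and that the mismatch is lower order. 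Fixing $\ve_0$ small enough depending only on $r_0$ and $n$ makes all three absorptions compatible, and yields the stated inequality. The reason the squared reservoirs from Lemma~\ref{lem_carleman_eq} cannot be discarded here—as they usually are—is precisely that $\dle$ differs from $\Box_\ell$ by a genuinely second-order term in $t$.
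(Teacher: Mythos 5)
Your overall plan (apply the Carleman identity to the unperturbed conjugated operator $\Box_\ell$, then pass to $\dle$ by Cauchy--Schwarz while keeping the two squared ``reservoirs'') differs from the paper's route, and unfortunately it does not close: the crude squaring step is where things go wrong.

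Writing $S=2A_1-A_2-A_3-A_4$ and using $\int|\Box_\ell v|^2\le 2\int|\dle v|^2+2\int|Sv|^2$ produces, from $|A_2v|^2=4\ve^2\tau^2 t^2|\p_t v|^2$, a term of size $\ve^2\tau^2\int|\p_t v|^2$, and from $|A_1v|^2$, a term $\ve^2\int|\p_t^2 v|^2$. Neither of these can be absorbed by the reservoirs. For the first, $|(L+\sigma)v|^2=|2\tau t\,\p_t v+2\tau(r-\rd)\p_r v+\sigma v|^2$ gives no pointwise lower bound on $\tau^2|\p_t v|^2$ (the coefficient $t$ vanishes on $\{t=0\}$, which meets $\supp v$), and any triangle-inequality split leaves a spatial remainder of order $\tau^2|\p_r v|^2$, a full factor $\tau$ larger than the $\tau\int|\nabla v|^2$ already on the left; since $\ve\sim\gamma$ is fixed while $\tau\to\infty$, this cannot be compensated. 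For the second, $\int|\p_t^2 v|^2$ is \emph{not} controlled by $\int|(\Box+q+\sigma)v|^2$: on the Fourier side the wave symbol $\xi_t^2-|\xi_x|^2$ vanishes near the light cone while $\xi_t^4$ is arbitrarily large there, and the order-two term $\Delta_x v$ produced by the mismatch is not available anywhere in the estimate. So both of your claimed absorptions for the first- and second-order perturbation terms fail.

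The paper's proof avoids this entirely by never isolating $|A_jv|^2$. It inserts the $A_j$'s \emph{inside} the two factors of the expansion, writing $(\dle-A_4)v=(\Box+q+\sigma-2A_1+A_3)v-(L-A_2+\sigma)v$, and then discards the two squared terms $\tfrac12|(\Box+q+\sigma-2A_1+A_3)v|^2$ and $\tfrac12|(L-A_2+\sigma)v|^2$ entirely (so the dangerous $|A_2v|^2$ sits safely inside a nonnegative quantity that is dropped). What remains is the cross term, whose extra pieces $\rt$ involve only products $A_ivA_jv$ (with $i\ne j$), $A_ivLv$, $\sigma vA_jv$, $\Box v A_2 v$. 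After integration by parts, each of these is an order of magnitude smaller than the crude square: e.g. $-2\int A_1vA_2v=4\ve^2\tau\int t\,\p_t^2 v\,\p_t v=-2\ve^2\tau\int|\p_tv|^2$ is of size $\ve^2\tau$, not $\ve^2\tau^2$, precisely because the skew-symmetry of $t\p_t$ gives a gain. This cancellation is exactly what you lose by pulling $S$ out and applying $(a+b)^2\le 2a^2+2b^2$. Your instinct that the reservoirs must be retained is close to the right idea, but the correct move is to put the $A_j$'s inside them before discarding, not to keep bare reservoirs and bound $|Sv|^2$ by them afterwards.
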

	
	\begin{proof}
		Recalling~\eqref{expansion} we have
		$$
		(\dle-A_4) v= (\Box+q+\sigma -2A_1+A_3)v-(L-A_2+\sigma)v,
		$$
		which gives
		\begin{align}
			|(\dle-A_4) v|^2/2&= |(\Box+q+\sigma -2A_1+A_3)v-(L-A_2+\sigma)v|^2/2 \nonumber\\
			&=|(\Box + q + \sigma-2A_1+A_3) v|^2/2 
			+ |(L-A_2 + \sigma) v|^2/2,
			\nonumber \\ & \label{expansion_gauss} \qquad 
			- \Box v(L + \sigma) v
			- (q + \sigma) v (L + \sigma) v+\rt.
		\end{align}
		The first two cross terms in~\eqref{expansion_gauss} are the same as in~\eqref{squared} and the term $\rt$ contains all the cross terms where at least one of the operators $A_j$ appears, that is
		\begin{align*}
			\rt&=2 A_1v Lv-2A_1vA_2v+2\sigma v A_1v-A_3vLv+A_3vA_2v-\sigma vA_3v\\
			&\quad+\Box v A_2 v+(q+\sigma)vA_2v.
		\end{align*}
		
		The calculations in the proof of Lemma~\ref{lem_carleman_eq} for the first two cross terms yield the following inequality,
		\begin{align}
			\int Q_-(\nabla \ell) v^2 dtdx&+\int Q_+(\nabla v)dtdx \nonumber \\ \label{int_forgauss}&\quad +\int R dtdx+\int \rt dtdx \leq \int |(\dle-A_4)v|^2 dtdx,
		\end{align}
		for $v \in C^\infty_0(\R^{1+n})$. Recall that $v \in C^\infty_0(\R^{1+n})$ is supported in $\{ \phi \geq \g \} \cap\{r\geq \rtt_0\}$. Using~\eqref{int_forgauss},~\eqref{pos_for_qm} and~\eqref{pos_for_qp} we arrive at
		\begin{align*}
			\g \tau^3 \int  |v|^2 dtdx &+ \tau \int |\nabla v|^2dtdx  +\int R dtdx+\int \rt dtdx \\
			&\leq C\int |(\dle-A_4)v|^2 dtdx+ C \tau \int |\p_t v|^2 dtdx.
		\end{align*}

		Notice that $\rd$ is fixed by~\eqref{choice_of_R}. That means in the region $[-\rd, \rd]\times \{\rtt_0 \leq r\leq \rd \}$ one has
		\begin{align}
			|\sigma|\leq C \tau, |q|\leq C \tau^2.
		\end{align}

		Since $|A_4 v |^2+|R|\leq C \tau^2 |v|^2$ by taking $\tau \geq \frac{C^\prime(r_0) }{\g}$ in the estimate above we obtain
		\begin{align}
			\label{sub_ell_with_a}
			\g \tau^3 \int  |v|^2 dtdx&+ \tau \int |\nabla v|^2dtdx  +\int \rt dtdx \nonumber \\
			&\leq C  \int |\dle v|^2 dtdx+  C\tau \int |\p_t v|^2 dtdx.
		\end{align}
		
		We need finally to absorb the term $\int \rt dt dx$ in~\eqref{sub_ell_with_a} by taking $\ve$ small enough. We start by looking at the integrals corresponding to the terms that do not contain $\sigma$. Those are $$2 A_1v Lv, 2A_1vA_2v, -A_3vLv,A_3vA_2v, \Box v A_2 v.$$ 
		We observe that they are of the form $\ve^{j} \tau (Av,Bv)_{L^2(M)}, j \in\{1,2\}$ with $A$ a self-adjoint operator of order $2$ and $B$ of order $1$ satisfying $B^*=-B+C$ with $C$ of order $0$. Note as well that both $A,B$ are independent of $\tau$ and $\ve$. Up to considering the real and imaginary part of $v$ we can assume that it is real valued. We have then
		$$
		2(Av,Bv)_{L^2(M)}= (Av,Bv)_{L^2(M)}+(Bv,Av)_{L^2(M)}=(([A,B]+CA)v,v)_{L^2(M)},
		$$
		and $[A,B]+CA$ is a differential operator of order $2$. As a consequence after an integration by parts we can bound
		$$
		\ve^j \tau |(Av,Bv)_{L^2(M)}|\leq C \ve \tau \left(\int |v|^2 dtdx+ \int |\nabla v |^2 dtdx\right),
		$$
		and the corresponding term can be absorbed in~\eqref{sub_ell_with_a} by taking $\tau\geq \tau_0$ large enough. We have for the term corresponding to $\sigma vA_3v$  
		\begin{align*}
			\left|\int \sigma v A_3 v dt dx \right|  = \ve^2  \left| \int \sigma v \p_t^2 v dtdx  \right| \leq C \ve^2 \tau \left (\int |\p_t v|^2 dtdx+\int | v|^2 dtdx \right),
		\end{align*}
		after an integration by parts. We find similarly for the term $2\sigma v A_1v$
		\begin{align*}
			\left|2 \int \sigma v A_1 v dt dx \right| \leq C \ve \tau \left (\int |\p_t v|^2 dtdx+\int | v|^2 dtdx \right),
		\end{align*}
      and these terms can be as well absorbed by taking $\tau$ large enough. For the last term we integrate by parts and find
		\begin{align}
        \label{term_bad}
		\left| \int (q+\sigma)vA_2v dtdx \right | \leq C \ve \tau^3 \int |v|^2.    
		\end{align}
Now we can still absorb this term but it does not suffice to take $\tau$ large since the power of $\tau$ in $\tau^3 \int |v|^2$ is the critical one. To absorb this term is necessary that we take $\ve \leq  \g \ve_0 $ small for some $\ve_0$ depending on $r_0,n$.
        
        We finally obtain that all terms coming from $\int \rt dtdx$ can indeed be absorbed in the right hand side of~\eqref{sub_ell_with_a} by taking $\ve \leq \g \ve_0$ small enough. This completes the proof of the proposition.
	\end{proof}

\begin{remark}
\label{rem_espilon_depends_ongamma}
It follows from the proof above that in order to absorb the additional terms coming from the Gaussian weight one has to choose $\ve$ small \textit{depending on $\g$}. This will turn out to play a role in the dependency of the constants on $\delta$, see Remark~\ref{remark_dependancy on g}. In particular one would hope to choose $\ve$ small depending on $r_0,n$ but independent of $\g$. Carrying out explicit calculations one sees that this can be achieved for all but one of the terms coming from $A_j$, $j=1,2,3$. It is the term $(q+\sigma)vA_2v$ that forces the coupling between $\g$ and $\ve$.	

More precisely, we have thanks to~\eqref{calc_gradphi}
 \begin{align}
   (q+\sigma)=G(\nabla \ell)+a=\tau^2(\nabla \phi,\nabla \phi)+3\tau/2=2\tau^2 \phi+3\tau/2.
 \end{align}
We look at the left hand side of~\eqref{term_bad}. Integrating by parts we see that $ \int  \frac{3 \tau}{2} v A_2 v$ does not force the coupling so we focus on
\begin{align}
    \int  2 \tau^2 \phi v A_2 v dtdx&= 2 \tau^3 \ve \left( \int (\p_t \phi)t v^2 dtdx+ \int \phi v^2 dtdx \right) \\
    &=2 \tau^3 \ve \left(  \int \phi v^2 dtdx-\int t^2 v^2 dtdx \right).
\end{align}
Keeping track of signs, we see that the only problematic term is $-2 \tau^3 \ve \int t^2 v^2 dtdx$, since it pops out with the bad sign.

\end{remark}

	\subsection{Using the Gaussian weight}
	
We are ready to prove the global Carleman estimate.

	\begin{thm}
		\label{thm_explicit_carleman}
		There is $C>0, \a=\a(r_0)>0$ and $\ve=\ve(r_0)>0$ depending only on $r_0,n$ such that for $0<\g \leq 1$ one has
		\begin{align}
			\tau^3 \norm{\pp e^{\tau \phi}u}{L^2}{2}+\tau \norm{\nabla \pp e^{\tau \phi}u}{L^2}{2}&\leq \frac{C}{\g} \left( \norm{\pp e^{\tau \phi} \Box u }{L^2(\{\phi > \g/2\})}{2}+e^{-\a \g^2 \tau} \norm{e^{\tau \phi}u}{H^1}{2}\right),
		\end{align}
		for all $\tau  \geq \frac{C}{\g^8}$, $u \in C^\infty_0(\{ \phi \geq \g \})$ satisfying $\supp(u) \subset \{r \geq \rtt_0\}$ and $\ve=\g \ve(r_0)$.
	\end{thm}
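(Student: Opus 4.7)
The plan is to apply the subelliptic estimate of Proposition~\ref{prop_subell} to a suitably cut-off version of $\pp e^{\tau \phi} u$, and then transfer the resulting bound back to $\pp e^{\tau \phi} u$ itself using Gaussian decay of the weight $\pp$ away from $\supp u$. Let $\chi = \chi(\phi)$ be a smooth cutoff equal to $1$ on $\{\phi \geq 3\g/4\}$ and supported in $\{\phi \geq \g/2\}$, and set $v = \chi \pp e^{\tau \phi} u$ and $w = e^{\tau \phi} u$. Since $\supp u \subset \{\phi \geq \g\} \subset \{\chi = 1\}$ and $\pp$ acts only in the $t$-variable, $v$ is supported in $\{r \geq \rtt_0\} \cap \{\phi \geq \g/2\}$. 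After choosing the constant $\ve(r_0)$ small enough so that $\ve = \g\, \ve(r_0) \leq (\g/2)\, \ve_0$, Proposition~\ref{prop_subell} applied with $\g/2$ in place of $\g$ yields
\begin{align}
\frac{\g}{2}\, \tau^3 \|v\|_{L^2}^2 + \tau \|\nabla v\|_{L^2}^2 \leq C\bigl(\|\dle v\|_{L^2}^2 + \tau \|\p_t v\|_{L^2}^2\bigr).
\end{align}

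For the analysis of $\dle v$, I would use the commutation $\dle \pp = \pp \Box_\ell$ derived above together with $\Box_\ell w = e^{\tau \phi} \Box u$, which gives $\dle(\pp w) = \pp e^{\tau \phi} \Box u$, and hence
\begin{align}
\dle v = \chi\, \pp e^{\tau \phi} \Box u + [\dle, \chi]\, \pp w.
\end{align}
The first term is supported in $\{\phi > \g/2\}$ and produces the principal source contribution of the theorem. The commutator $[\dle, \chi]$ is a first-order differential operator whose coefficients involve derivatives of $\chi(\phi)$ (supported in the transition region $\{\g/2 \leq \phi \leq 3\g/4\}$) and grow at most like $\tau$ (via the $A_2$ piece of $\dle$). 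A direct computation using $\phi = -t^2/2 + (r - \rd)^2/2$ shows that the $t$-distance at fixed $x$ from this transition region to $\supp w = \{\phi \geq \g\}$ is bounded below by $c\g$, with $c$ depending only on $r_0$. Combined with the pointwise bound $|\pp w(t, x)| \lesssim e^{-\tau d(t, x)^2/(2\ve)} \|w(\cdot, x)\|_{L^2_t}$ coming from the explicit Gaussian convolution kernel of $\pp$ (and its analogue for $\p w$, since $\nabla$ commutes with $\pp$), this yields $\|[\dle, \chi]\, \pp w\|_{L^2}^2 \leq C e^{-\a \g^2 \tau} \|w\|_{H^1}^2$ after absorbing polynomial factors of $\tau$ into the exponential.

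The main obstacle will be absorbing the term $\tau \|\p_t v\|^2$: it appears on the RHS of the subelliptic estimate but is also part of $\tau \|\nabla v\|^2$ on the LHS, so it cannot be absorbed in an obvious way. The key point is that $\pp$ effectively restricts the time frequencies of $\pp w$ to $|\xi_t| \lesssim \sqrt{\tau/\ve}$. Writing $\p_t v = (\p_t \chi)\, \pp w + \chi\, \p_t \pp w$, the first piece is Gaussian-small in the same way as the commutator above, and for the second I would apply Plancherel in $t$ to obtain, for any $M > 0$,
\begin{align}
\|\p_t \pp w\|_{L^2}^2 \leq M^2 \|\pp w\|_{L^2}^2 + C\, \frac{\tau}{\ve}\, e^{-\ve M^2/(2\tau)} \|w\|_{L^2}^2.
\end{align}
Choosing $M^2$ of order $\g \tau^2$ with a sufficiently small proportionality constant, the first contribution $\tau M^2 \|\pp w\|^2$ is of order $\g \tau^3 \|\pp w\|^2$; writing $\pp w = v + (1-\chi)\pp w$ with the latter Gaussian-small, this is absorbed into $(\g/2) \tau^3 \|v\|^2$ on the LHS, while the tail is exponentially small in $\g^2 \tau$ and becomes part of the $e^{-\a \g^2 \tau}$ error. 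The hard part throughout is the careful bookkeeping of constants: the threshold $\tau \geq C \g^{-8}$ in the statement reflects the compounding effect of the $O(\tau)$ commutator coefficients, the $1/\ve = 1/(\g \ve_0)$ factor in the Plancherel tail, and the requirement that $e^{-\a \g^2 \tau}$ dominate all the resulting polynomial-in-$\tau$ losses.

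Putting everything together gives
\begin{align}
\frac{\g}{4}\, \tau^3 \|v\|^2 + \tau \|\nabla v\|^2 \leq C\bigl(\|\pp e^{\tau \phi} \Box u\|_{L^2(\{\phi > \g/2\})}^2 + e^{-\a \g^2 \tau} \|e^{\tau \phi} u\|_{H^1}^2\bigr).
\end{align}
The final step is to transfer the bound from $v$ to $\pp e^{\tau \phi} u$: the difference $\pp e^{\tau \phi} u - v = (1-\chi)\, \pp e^{\tau \phi} u$ is supported in $\{\phi < 3\g/4\}$, at $t$-distance $\gtrsim \g$ from $\supp w$, so both it and its gradient (using $[\nabla, \pp] = 0$) are Gaussian-small and contribute only to the exponential error term. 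Dividing by $\g/4$ and readjusting the exponent $\a$ concludes the proof.
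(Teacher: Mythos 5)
Your proposal is correct and follows essentially the same route as the paper: apply Proposition~\ref{prop_subell} to a cut-off version of $\pp e^{\tau\phi}u$, bound the cut-off and commutator errors by Gaussian almost-locality of $\pp$ (the paper packages this as Lemma~\ref{lem_almostlocal}, you redo it from the explicit heat kernel), and — the key step, which you correctly identify as the main obstacle — absorb $\tau\|\p_t v\|^2$ by splitting time-frequencies at a threshold $\sim\sqrt{\g}\,\tau$ so that the low-frequency part is dominated by the $\g\tau^3\|v\|^2$ term and the high-frequency tail is $O(e^{-c\g^2\tau})$. The only cosmetic differences from the paper's proof are that you use a single cutoff $\chi(\phi)$ in place of the paper's two nested cutoffs $\chi_1,\chi_2$ (the latter written so that the relabeling $\g\to\g/8$ cleans things up at the end), and your Plancherel tail bound $C(\tau/\ve)e^{-\ve M^2/(2\tau)}$ is stated slightly differently from the paper's $\sigma^2\tau^2 e^{-\ve\sigma^2\tau}$, but both are obtained the same way and serve the same purpose.
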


	\begin{proof}

        To alleviate notation we write $\Omega_\g=\{ \phi > \g/2\}$. We fix $\ve=\gamma \ve_0$ as given by Proposition~\ref{prop_subell}. We define $v= \pp e^{\tau \phi}  u$. Notice that since $\pp$ is not local we can not directly apply the subelliptic estimate of Proposition~\ref{prop_subell} to $v$.
		
		We consider two cut-offs $\chi_1 \in C^\infty_0(\{\phi \geq \g\})$ such that $\chi_1=1$ on $\{\phi \geq 2\g\}$ and $\chi_2 \in C^\infty_0(\{\phi \geq 4\g\})$ such that $\chi_2=1$ on $\{\phi \geq 8\g\}$. Then the function 
		$$
		\chi_1 v= \chi_1 \pp e^{\tau \phi}  u= \chi_1 \pp e^{\tau \phi} \chi_2 u,
		$$
		satisfies $\supp( \chi_1 v) \subset  \{r\geq \rtt_0\} \cap \{ \phi \geq \gamma\}$. We write 
		\begin{align}
			\label{estim_1}
			\norm{v}{H^1_\tau}{} \leq  \norm{\chi_1 v}{H^1_\tau}{}+ \norm{(1-\chi_1)v}{H^1_\tau}{}.
		\end{align}
		We can estimate the second term with the help of Lemma~\ref{lem_almostlocal} and by noticing that $\dist(\supp(1-\chi_1),\chi_2))\geq c\g$ as follows:
		\begin{align}
			\norm{(1-\chi_1)v}{H^1_\tau}{} \leq \frac{C}{\g^2}e^{-c\frac{\gamma^2 \tau}{2\ve}} \norm{e^{\tau \phi }u}{H^1_\tau}{}.
		\end{align}
		We apply Proposition~\ref{prop_subell} for the other term to find, for $\g < 1 $
		\begin{align}
			\label{appl_of_subell}
			&\g \left(\tau^3 \int  |\chi_1 v|^2 dtdx + \tau \int |\nabla \chi_1 v|^2dtdx \right)\\
			&\leq C  \int |\dle (\chi_1 v)|^2 dtdx+  C\tau \int |\p_t (\chi_1 v)|^2 dtdx,
		\end{align}
		and we need to bound the two terms on the right hand side of~\eqref{appl_of_subell}.
		We start by writing 
		\begin{align}
			\dle (\chi_1v)=\chi_1 \dle v +[\dle,\chi_1]v=\chi_1 \dle v +[\dle,\chi_1]\pp e^{\tau \phi} \chi_2 u,
		\end{align}
		which implies thanks to the support properties of $\chi_1, \chi_2$ and Lemma~\ref{lem_almostlocal} that
		\begin{align}
			\label{estim_2}
			\int |\dle (\chi_1 v)|^2 dtdx \leq C \int_{\Omega_\g} |\dle v|^2 dtdx+\frac{C}{\g^4}e^{-c\frac{\gamma^2 \tau}{\ve}} \norm{e^{\tau \phi }u}{H^1}{2}.
		\end{align}
		For the other term we write as well $\p_t (\chi_1v)=\chi_1 \p_t v+[\p_t,\chi_1]\pp e^{\tau \phi} \chi_2 u$ which implies again thanks to Lemma~\ref{lem_almostlocal}
		\begin{align}
			\label{estim_3}
			\int |\p_t (\chi_1 v)|^2 dtdx \leq C \int_{\Omega_\g} |\p_t v|^2 dtdx+ \frac{C}{\g^4}e^{-c\frac{\gamma^2 \tau}{\ve}} \norm{e^{\tau \phi }u}{H^1}{2}.
		\end{align}
		To get a bound for $\norm{D_t v}{L^2}{}$ we work on the Fourier domain (with respect to the time variable $t$) and distinguish between frequencies smaller or bigger than $\sigma \tau$ with $\sigma>0$ to be chosen. We have
		\begin{align*}
			\norm{D_t v}{L^2}{}&\leq \norm{1_{|D_t|\leq \sigma \tau}D_t v}{L^2}{}+\norm{1_{|D_t|\geq \sigma \tau}D_t v}{L^2}{}\\
			&=\norm{1_{|D_t|\leq \sigma \tau}D_t v}{L^2}{}+\norm{1_{|D_t|\geq \sigma \tau} D_t e^{-\frac{\ve D_t^2}{2 \tau}} e^{\tau \phi }u}{}{}\\
			&\leq \norm{1_{|D_t|\leq \sigma \tau}D_t v}{L^2}{}+  \underset{\xi_t \geq \sigma \tau}{\textnormal{max}} \left( \xi_t e^{-\frac{\ve \xi^2_t}{2 \tau}} \right)\norm{e^{\tau \phi}u}{L^2}{}.
		\end{align*}
		Now the function $\R^+  \ni  s \mapsto s e^{-\ve\frac{s^2}{2\tau}}$ reaches its maximum at $s=\sqrt{\frac{\tau}{\ve}}$ and is decreasing on $[\sqrt{\frac{\tau}{\ve}}, \infty)$. As a consequence, if $\tau \geq \frac{1}{\sigma^2 \ve}$, one has $\underset{\xi_t \geq \sigma \tau}{\max}(\xi_t e^{-\ve\frac{\xi_t^2}{2\tau^3}})=\sigma \tau e^{- \ve \frac{\sigma^2 \tau}{2}}$.
		We obtain therefore, for $\tau \geq \tau_0 \geq \frac{1}{\sigma^2\ve}$,
		\begin{align}
			\label{estim_gauss_sigma}
			\tau \norm{\p_t v}{L^2}{2} \leq \sigma^2 \tau^3 \norm{v}{L^2}{2}+\sigma^2 \tau^3e^{-\ve \sigma^2 \tau} \norm{e^{\tau \phi}u}{L^2}{2}.
		\end{align}
		We collect estimates~\eqref{estim_1},~\eqref{appl_of_subell},~\eqref{estim_2}~\eqref{estim_3} and~\eqref{estim_gauss_sigma} to find
		\begin{align}
			\tau^3 \norm{v}{L^2}{2}+\tau \norm{\nabla v}{L^2}{2}&\leq \frac{C}{\g}\left(\norm{\dle v}{L^2}{2}+\frac{C}{\g^4}e^{-c\frac{\gamma^2 \tau}{\ve}} \norm{e^{\tau \phi }u}{H^1}{2}\right)\\
			&\quad+\frac{C}{\g}\left(\sigma^2 \tau^3 \norm{v}{L^2}{2}+\sigma^2 \tau^3e^{-\ve \sigma^2 \tau} \norm{e^{\tau \phi}u}{L^2}{2}\right)\\
			&\quad+\frac{C}{\g^4}\left(\tau^3e^{-c\frac{\gamma^2 \tau}{\ve}} \norm{e^{\tau \phi }u}{L^2}{2}+\tau e^{-c\frac{\gamma^2 \tau}{\ve}} \norm{\nabla (e^{\tau \phi }u)}{L^2}{2}\right).
		\end{align}
		Recall that $\ve=\g \ve_0$ with $\ve_0$ depending only on $r_0,n$. As a consequence for $\tau\geq \frac{C}{\g^8}$ we have $\frac{\tau^3}{\g^4}e^{-c\frac{\gamma^2 \tau}{\ve}} \leq Ce^{-c^\prime \g \tau}$. We obtain then:
		\begin{align}
			\tau^3 \norm{v}{L^2}{2}+\tau \norm{\nabla v}{L^2}{2}&\leq \frac{C}{\g}\norm{\dle v}{L^2(\Omega_\g)}{2}+\frac{C}{\g }e^{-c \g \tau} \norm{e^{\tau \phi }u}{H^1}{2}\\
			&\quad+\frac{C}{\g}\left(\sigma^2 \tau^3 \norm{v}{L^2}{2}+\sigma^2 \tau^3e^{-\ve \sigma^2 \tau} \norm{e^{\tau \phi}u}{L^2}{2}\right).
		\end{align}
		We now choose $\sigma^2=\frac{\g}{2C}$ and recall the definition of $\dle$ to arrive at 
		\begin{align}
			\frac{\tau^3}{2} \norm{v}{L^2}{2}+\tau \norm{\nabla v}{L^2}{2}\leq \frac{C}{\g} \norm{\pp e^{\tau \phi} \Box u }{L^2(\Omega_\g)}{2}+\frac{C}{\g}e^{-c\g \tau} \norm{e^{\tau \phi }u}{H^1}{2}+\frac{\tau^3}{2}e^{-\ve \frac{\g}{2C}  \tau} \norm{e^{\tau \phi}u}{L^2}{2}.    
		\end{align}
 Recalling $e^{-\ve \frac{\g}{2C}  \tau}=e^{-\g^2 c \tau }$ we obtain, for $\tau \geq \frac{C}{\g^8}$
		\begin{align}
			\label{estim_before_gamma8}
			\tau^3 \norm{v}{L^2}{2}+\tau \norm{\nabla v}{L^2}{2}&\leq \frac{C}{\g} \left( \norm{\pp e^{\tau \phi} \Box u }{L^2(\Omega_\g)}{2}+e^{-c \g\tau} \norm{e^{\tau \phi }u}{H^1}{2}\right)\\&\quad+C\tau^3e^{-\g^2 c \tau} \norm{e^{\tau \phi}u}{L^2}{2}.
		\end{align}
		We remark that for $\tau \geq \frac{C}{\g^8}$ with $C$ large enough depending only on $r_0,n$ one has
		\begin{align}
			\tau^3e^{-\g^2 c \tau} \leq C e^{\frac{-\g^2 c \tau}{2}},
		\end{align}
		which holds since for some $C^\prime>0$
		\begin{align}
			C^\prime e^{\frac{\g^2 c \tau}{2}} \geq (\g^2 \tau)^4=\g^8 \tau \cdot \tau^3\geq C \tau^3.
		\end{align}
		Combining this with~\eqref{estim_before_gamma8} finally yields that there is $C$ depending only on $r_0,n$ such that for all $\tau \geq \frac{C}{\g^8}$
		\begin{align}
			\tau^3 \norm{v}{L^2}{2}+\tau \norm{\nabla v}{L^2}{2}&\leq \frac{C}{\g} \left( \norm{\pp e^{\tau \phi} \Box u }{L^2(\Omega_\g)}{2}+e^{-c \g \tau} \norm{e^{\tau \phi }u}{H^1}{2}\right)\\&\quad+Ce^{-c \g^2  \tau} \norm{e^{\tau \phi}u}{L^2}{2},
		\end{align}
		which implies in particular that for all $\tau \geq \frac{C}{\g^8}$
		\begin{align}
			\tau^3 \norm{v}{L^2}{2}+\tau \norm{\nabla v}{L^2}{2}&\leq \frac{C}{\g} \left( \norm{\pp e^{\tau \phi} \Box u }{L^2(\Omega_\g)}{2}+e^{-\g^2 c \tau} \norm{e^{\tau \phi}u}{H^1}{2}\right).
		\end{align}
		This proves the theorem, up to replacing $\g$ by $\frac{\g}{8}$ and renaming the constants.
	\end{proof}
	
	\bigskip
	
Several remarks are in order.	
	
	\begin{remark}
		\label{remark_gamma_small}
		The assumption $\g\leq 1$ made in Theorem~\ref{thm_explicit_carleman} allows to focus on the region of interest, that is close to the characteristic cone $\{\phi=0\}$. If $\g\geq 1$ then one can have an estimate with the constant $C/\g$ replaced by some constant depending only on $r_0,n$ (and which does not blow up as $\g$ approaches $0$), but this is of no interest to us.
	\end{remark}

\begin{remark}
	\label{remark_dependancy on g}
In order to control the term $\norm{D_t v}{L^2}{}$ we had to distinguish between frequencies smaller or larger than $\sigma \tau$ and then choose $\sigma$ small depending on $\g$. As it turns out, the exponentially small term $e^{-\a \g^2 \tau} \norm{e^{\tau \phi}u}{H^1}{2}$ will be the dominant one in the unique continuation argument, see Proposition~\ref{prop_qual_uniq}. The quantity $\a \g^2$ roughly corresponds to how much one can propagate uniqueness. The factor $\g^2$ comes from the forced choices $\ve=c\g$ and $\sigma^2=c^\prime \g$. One could try to choose $\ve$ or $\sigma$ independent of $\g$ in order to improve the unique continuation result, in the sense of advancing further in the level sets of $\phi$. However, examination of the proof shows that $\ve$ and $\sigma$ cannot be chosen independent of $\g$. See as well Remark~\ref{rem_espilon_depends_ongamma}.
	
\end{remark}

	\begin{remark}
    \label{rem_potential}
		Consider a time independent potential $q=q(x)$ satisfying $\norm{q}{L^\infty}{}\leq M$. Then using the fact that it commutes with the Gaussian weight we can write
		\begin{align}
			\norm{\pp e^{\tau \phi} (\Box+q)u}{L^2}{} &\geq  \norm{\pp e^{\tau \phi}  \Box u}{L^2}{}- \norm{\pp e^{\tau \phi} qu}{L^2}{}\\
			&=\norm{\pp e^{\tau \phi}  \Box u}{L^2}{}- \norm{q \pp e^{\tau \phi} u}{L^2}{} \\
			&\geq \norm{\pp e^{\tau \phi}  \Box u}{L^2}{}-M \norm{\pp e^{\tau \phi} u}{L^2}{},
		\end{align}
		which implies that Theorem~\ref{thm_explicit_carleman} remains valid for $\Box+q$ up to taking $\tau \geq \frac{C(r_0,M)}{\g}$.
	\end{remark}

       \begin{remark}
		The same estimate is also valid for a weight $\psi=\phi+c$ for a constant $c$. This follows form the fact that the key quantities $Q_\pm$ are the same for $\phi$ and $\psi$.     
	\end{remark}

	\subsection{Qualitative unique continuation as a consequence: an example}

The following proposition is a qualitative unique continuation result which we prove here in order to see how one can use the Carleman estimate of Theorem~\ref{thm_explicit_carleman} to prove uniqueness. This serves as an illustration of how we propagate uniqueness, but is not used in what follows. We recall the definition of the cylinder $\C$ in~\eqref{def_cylinder} (here $R$ is replaced by $\rtt$) and $\rtt_0$ in~\eqref{def_of_rzero}.
	\begin{prop}
		\label{prop_qual_uniq}
		There is a constant $a>0$ depending on $r_0,n$ only such that the following unique continuation property holds. Consider $u$ satisfying $\Box u =0$ on $\R^{1+n}$ and $u=0$ on $\C$. Let $\delta>0$. Then for any $\g>0$ with $\g- a \delta^2 \geq \delta$ we have that $u=0$ on $\{\phi \geq \g \}$ implies $u=0$ on $\{\phi \geq \g- a \delta^2 \}$.
	\end{prop}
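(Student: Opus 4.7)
The plan is to apply Theorem~\ref{thm_explicit_carleman} to a smooth cutoff of $u$ localized in a thin layer attached to the known vanishing region $\{\phi \geq \gamma\}$, and then let $\tau \to \infty$.

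Fix $a = \a/2$, where $\a = \a(r_0)$ is the Carleman constant of Theorem~\ref{thm_explicit_carleman}. Choose $\chi \in C^\infty(\R)$ with $\chi = 1$ on $[\gamma - a\delta^2/2, \infty)$, $\chi = 0$ on $(-\infty, \gamma - a\delta^2]$, and $|\chi^{(k)}| \lesssim \delta^{-2k}$. Set $v = \chi(\phi) u$, multiplied if necessary by a $\delta$-independent radial cutoff enforcing $\supp v \subset \{r \geq \tilde{r}_0\}$; its commutator is pointwise bounded by $|u| + |\nabla u|$ on a set whose intersection with $\{\phi \geq \gamma - a\delta^2\}$ lies either inside $\C$ (where $u = 0$) or in a region where the weight $e^{\tau\phi}$ stays below $e^{\tau(\gamma - a\delta^2/2)}$, and is thus harmless. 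Since $u = 0$ on $\{\phi \geq \gamma\}$ and $\chi(\phi) = 1$ there, one has $\supp v \subset \{\gamma - a\delta^2 \leq \phi \leq \gamma\}$.

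From $\Box u = 0$ we obtain $\Box v = [\Box, \chi(\phi)] u$, supported in the layer $A := \{\gamma - a\delta^2 \leq \phi \leq \gamma - a\delta^2/2\}$ with $|\Box v| \lesssim \delta^{-4}(|u| + |\nabla u|)$. Applying Theorem~\ref{thm_explicit_carleman} with $\gamma_c := \gamma - a\delta^2$, $\ve = \gamma_c \ve(r_0)$, $\tau \geq C/\gamma_c^8$, and then inserting the pointwise weight estimates $e^{\tau\phi} \leq e^{\tau(\gamma - a\delta^2/2)}$ on $\supp \Box v$ and $e^{\tau\phi} \leq e^{\tau\gamma}$ on $\supp v$, one obtains
\begin{align}
\tau^3 \|\pp e^{\tau\phi} v\|_{L^2}^2 \leq \frac{C \|u\|_{H^1}^2}{\gamma_c}\Bigl(\delta^{-8} e^{\tau(2\gamma - a\delta^2)} + \tau^2 \delta^{-4} e^{\tau(2\gamma - \a\gamma_c^2)}\Bigr).
\end{align}
The hypothesis $\gamma_c \geq \delta$ and the choice $a < \a$ force $\a\gamma_c^2 \geq \a\delta^2 > a\delta^2$, so both right-hand side terms are dominated by $C(\delta)\tau^2 e^{\tau(2\gamma - a\delta^2)}$.

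To extract the vanishing of $u$, fix any $\mu > 0$ and restrict attention to $T_\mu := \{\phi \geq \gamma - a\delta^2/2 + \mu\}$, on which $v = u$ and $e^{\tau\phi} \geq e^{\tau(\gamma - a\delta^2/2 + \mu)}$. Dividing the above inequality by $e^{\tau(2\gamma - a\delta^2)}$ makes the right-hand side decay exponentially in $\tau$, while on $T_\mu$ a factor $e^{2\tau\mu}$ persists on the left. Using that the Fourier multiplier $\pp$ tends strongly to the identity on $L^2$ as $\tau \to \infty$ with fixed $\ve = \gamma_c \ve_0$ (its symbol $e^{-\ve \xi_t^2/(2\tau)} \to 1$ pointwise), the limit $\tau \to \infty$ forces $\|u\|_{L^2(T_\mu)} = 0$. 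Letting $\mu \downarrow 0$ and absorbing the $a\delta^2/2$ gap into a slight redefinition of $a$ yields $u = 0$ on $\{\phi \geq \gamma - a\delta^2\}$. The main obstacle is the nonlocality of $\pp$ in $t$: since $\pp$ does not commute with multiplication by $e^{\tau\phi}$, one cannot turn the left-hand side into a pointwise weight-times-function expression and is forced into a limiting argument based on $\pp \to \mathrm{Id}$. This is exactly the mechanism coupling $\ve$ to the geometric parameter $\gamma$ in Theorem~\ref{thm_explicit_carleman} and, as noted in Remark~\ref{remark_dependancy on g}, is what limits each propagation step to $O(\gamma^2)$.
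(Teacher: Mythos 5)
Your proposal takes essentially the same route as the paper --- apply the global Carleman estimate of Theorem~\ref{thm_explicit_carleman} to a cut-off of $u$, balance the commutator weight against the $e^{-\a\g_c^2\tau}$ error, and send $\tau\to\infty$ --- but you place the cut-off at the top of the region, near the target level set $\{\phi = \g - a\delta^2\}$, whereas the paper places it at the bottom, near $\{\phi = \delta\}$. Concretely, the paper takes $\chi$ supported in $\{\phi\geq\delta/2\}$ and applies the Carleman estimate with parameter $\g_c \sim \delta$; the commutator then carries the small weight $e^{\tau\delta}$ and the binding term is the Carleman error $e^{(2\g - \a\delta^2)\tau}$. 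You instead take $\g_c = \g - a\delta^2$, which makes the Carleman error $e^{-\a\g_c^2\tau}$ very small (since $\g_c\geq\delta$), and the commutator, now supported where $\phi\leq\g-a\delta^2/2$, becomes the binding term. Both roads lead to the same propagation length of order $\delta^2$, and your choice $a=\a/2$ followed by the halving at the end matches the paper's $a=\a/4$ up to relabelling.

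Two imprecisions should be tightened. First, you assert that the radial-cutoff commutator "lies either inside $\C$ (where $u=0$)\ldots". That region is in fact not contained in $\C$; the correct statement, which the paper makes explicit, is that by finite speed of propagation $u$ vanishes on the larger set $\{\phi\geq 0\}\cap\{r\leq\rtt\}$, which does contain $\supp\nabla\rho\cap\{\phi\geq\g_c\}$. (In fact, by the same finite-speed observation the radial cutoff is unnecessary: $\supp(\chi(\phi)u)\subset\{r\geq\rtt\}\subset\{r\geq\rtt_0\}$ already, which is exactly how the paper handles it.) Second, the conclusive step --- "the limit $\tau\to\infty$ forces $\|u\|_{L^2(T_\mu)}=0$" via $\pp\to\mathrm{Id}$ --- is a heuristic, not a proof. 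After dividing through, what you actually obtain is a $\tau$-uniform bound $\|\pp e^{\tau(\phi-\g+a\delta^2/2)}v\|_{L^2}\leq C(\delta)$ (note: this bound is $O(\tau^{-1/2})$, not exponentially small, because the commutator term becomes a constant after the division); the deduction $\supp v\subset\{\phi\leq\g-a\delta^2/2\}$ then follows from Lemma~\ref{lemme d analyse harmonique} (Hörmander's argument), which is exactly the tool the paper invokes, and which does more than just observe that the symbol $e^{-\ve\xi_t^2/(2\tau)}$ tends to $1$ pointwise. The detour through $T_\mu$ and $\mu\downarrow 0$ is then superfluous. With these fixes your argument is complete and is an honest alternative to the paper's proof.
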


	\begin{proof}
		
		Consider $u$ satisfying $\Box u =0$ on $\R^{1+n}$ and $u=0$ on $\C$. Notice that by finite speed of propagation this implies that $u=0$  in $\{\phi \geq 0\} \cap \{r \leq \rtt\}$. Let $\chi \in C^\infty_0(\{\phi \geq \delta/2\})$ such that $\chi_1=1$ on $\{\phi \geq \delta\}$. We have in particular that $\supp(\chi u) \subset \{\phi \geq \delta/2\} \cap \{r \geq \rtt_0\} $. We apply the estimate of Theorem~\ref{thm_explicit_carleman} to $\chi u$ and downgrade it to find that for $\tau \geq \frac{C}{\delta^{8}}$ one has
		\begin{align}
			\norm{\pp e^{\tau \phi}(\chi u)}{L^2}{2}&\leq \frac{C}{\delta} \left( \norm{\pp e^{\tau \phi} \Box (\chi u) }{L^2}{2}+e^{-\a \delta^2 \tau} \norm{e^{\tau \phi}\chi u}{H^1}{2}\right).
		\end{align}
		We need to control the right hand side of the estimate above. We write
		\begin{align}
			\Box(\chi u)=\chi \Box u+ [\Box, \chi] u= [\Box, \chi] u,
		\end{align}
		and use the fact that the commutator is supported on $\supp(\nabla \chi)$ where $\phi \leq  \delta$. This gives
		\begin{align}
			\norm{\pp e^{\tau \phi} \Box (\chi u) }{L^2}{2} \leq \norm{ e^{\tau \phi} \Box (\chi u) }{L^2}{2} \leq e^{2 \delta \tau } \norm{u}{H^1}{2}.
		\end{align}
		Recalling that $\chi u$ is supported on $\{\phi \leq \g \}$ and defining $a=\a/4$ we have as well
		\begin{align}
			e^{-\a \delta^2 \tau} \norm{e^{\tau \phi}\chi u}{H^1}{2} \leq \tau^2 e^{(2\g-\a \delta^2)\tau}\norm{u}{H^1}{2} \leq C e^{(2\g-2a \delta^2)\tau}\norm{u}{H^1}{2}.
		\end{align}
Hence
\begin{align}
	\norm{\pp e^{\tau \phi}(\chi u)}{L^2}{2} \leq \frac{C}{\delta} \left(e^{2 \delta \tau}+ e^{(2\g-2a \delta^2)\tau} \right) \norm{u}{H^1}{2}.
	\end{align}	
Using the assumption $2\g -2 a \delta^2 \geq 2\delta$   we obtain therefore
		\begin{align}
			\norm{\pp e^{\tau \phi}  (\chi u) }{L^2}{} \leq \frac{C}{\sqrt{\delta}} e^{(2\g-2 a\delta^2)/2\tau}\norm{u}{H^1}{}.
		\end{align}
Applying Lemma~\ref{lemme d analyse harmonique} this gives $\chi u =0$ on $\{\phi \geq \g- a \delta^2 \}$. Since $\chi=1$ on $\{\phi \geq \delta \}$ the assumption $\g- a \delta^2 \geq \delta$ implies that $u=0$ on the set $\{\phi \geq \g- a \delta^2 \}$ and hence the statement.
	\end{proof}
	
\begin{remark}
\label{rem_numberofiterations}
Proposition~\ref{prop_qual_uniq} above says that even though we start from a \textit{global} Carleman estimate, as far as the unique continuation procedure is concerned the exponentially small remainder term $e^{-\a \delta^2 \tau} \norm{e^{\tau \phi}\chi u}{H^1}{2}$ becomes in fact the dominant one. We can only propagate information from $\{\phi \geq \gamma\}$ to $\{\phi \geq \g- c \delta^2\}$ for a constant $c$ depending only on $r_0,n$. As a consequence, in order to prove uniqueness up to the wave cone (which is the optimal domain) one needs to iterate this procedure. This can only be achieved with polynomial number of iterations in $1/\delta$ but not with a logarithmic number of iterations. If the latter was possible, we would get improvement in $\ct(\delta)$.
\end{remark}

\section{The local explicit quantitative estimate}
\label{sec_local}
	
	We will now use the procedure in~\cite{Laurent_2018} in order to prove a  \textit{quantitative} version of Proposition~\ref{prop_qual_uniq}. In order to get an estimate that can be iterated one has to only propagate \textit{low frequency} information with respect to time. Let $m(t)$ be a smooth radial function, compactly supported in $|t|<1$ such that $m(t)=1$ for $|t|<3/4$. We shall denote by $M^\mu$ the Fourier multiplier defined $M^\mu=m(\frac{D_t}{\mu})$, that is
	$$
	M^\mu u (t,x)=\mathcal{F}_t^{-1}\left(m\left(\frac{\xi_t}{\mu}\right) \mathcal{F}_t (u)(\xi_t,x)\right)(t).
	$$
	Therefore the upper index $\mu$ translates to an operator that localizes to times frequencies smaller than $\mu$. We shall also use a regularization operator. Given $f \in L^\infty(\R^{1+n})$ we set
	$$
	f_\lambda(t,x):=e^{-\frac{D_t^2}{\lambda}}f=\left(\frac{\lambda}{4 \pi}  \right)^{\frac{1}{2}}\int_{\R} f(s,x)e^{-\frac{\lambda}{4}|t-s|^2}ds.
	$$
	That is, the lower index $\lambda$ produces an analytic function with respect to the time variable. We will need also the combination of the two procedures above. Given $\lambda, \mu >0$, we write $M_\lambda^\mu$ for the Fourier multiplier defined by $M_\lambda^\mu=m_\lambda (\frac{D_t}{\mu})$ or more precisely:
	$$
	(M_\lambda^\mu u)(t,x)=\mathcal{F}_t^{-1}\left(m_\lambda\left(\frac{\xi_t}{\mu}\right)\mathcal{F}_t(u) (\xi_t,x)\right)(t).
	$$
	That is, we first regularize and then localize. In the sequel we shall use the localization and regularization parameters $\lambda, \mu >0$ and we will suppose that $\lambda \sim \mu$, that is 
	$$
	1/\tilde{C} \mu \leq \lambda \leq \tilde{C} \mu,
	$$
	for some $\tilde{C}>0$.
	
	\bigskip

	We consider a small parameter $\delta>0$ which we think of as the square of the distance to the optimal cone and a larger parameter $\g>0$ which plays the role of $\g$ in Proposition~\ref{prop_qual_uniq}. That is, we are interested in propagating information close to the hypersurface $\{\phi=\g\}$. We define 
	\begin{align}
		\label{def_of_zeta}
		\zeta=\frac{\a \delta^2}{16},
	\end{align}
	where the constant $\a$ is the one in Theorem~\ref{thm_explicit_carleman}. We consider then $\sigma \in  C^\infty_0(\{\g-\frac{\zeta}{4}\leq \phi  \leq \g+\frac{\zeta}{4}\})$ such that $\sigma=1$ in a neighborhood of $\{\g-\frac{\zeta}{8}\leq \phi  \leq \g+\frac{\zeta}{8}\}$. We write finally $\Omega_\delta= \{\phi > \delta\}$.

	\begin{thm}
		\label{thm local quant estimate}
 There exist $C,N>0$ depending on $r_0,n$ only such that for  any $\theta \in C^\infty_0(\R^{1+n})$ with $\theta(x)=1 $ on a neighborhood of $\{\phi > \g+\frac{\zeta}{8} \} $ we have that for all $ \kappa, \alpha>0$ there exist $\kp \sim  \min(\delta^{N}, \kappa \delta^{N},  \alpha \delta^{N})$ and $\beta= \min(\delta^{N}, \kappa \delta^{N},  \alpha \delta^{N})$ such that for all $\mu \geq \frac{C}{ \delta^8 \beta}$ and $u \in C^\infty_0(\R^{1+n})$ with $\supp (u) \subset \{r\geq \rtt_0\}$, one has:
	\begin{equation*}
			\norm{M^{\beta \mu}_{ \mu} \sigma_{\mu }u}{H^1}{} \leq \frac{C }{\delta^{N}} e^{\kappa  \mu}\left(\norm{M^{\alpha \mu}_{\mu }\theta_{\mu }u}{H^1}{}+\norm{\Box u}{L^2(\Omega_\delta)}{}\right)+\frac{C}{\delta^{N}}e^{-\kp \mu } \norm{u}{H^1}{}.
		\end{equation*}
    \end{thm}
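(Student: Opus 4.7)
My approach will follow the propagation scheme of~\cite{Laurent_2018}: apply the Carleman estimate of Theorem~\ref{thm_explicit_carleman} to a suitable cutoff of the time-regularized function $u_\mu$, with Carleman parameters $\tau=c\mu$ and Carleman $\g$-parameter $\g'\sim\zeta\sim\delta^2$. The choice $\ve=\g'\ve_0\sim\delta^2$ is the one forced by Theorem~\ref{thm_explicit_carleman}, and it makes the Gaussian weight $\pp$ behave like a low-frequency Fourier projector at the scale $|\xi_t|\lesssim\sqrt{\mu/\delta^2}$, which is the scale relevant to $M^{\beta\mu}_\mu$ and $M^{\alpha\mu}_\mu$ once $\beta,\alpha$ are chosen small enough. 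The preliminary regularization $u_\mu$ (analytic in $t$) ensures that commutators of any spatial cutoff with the time-frequency operators produce only remainders that are exponentially small in $\mu$.

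\textbf{Setup and use of the Carleman estimate.} Introduce a bump $\chi\in C^\infty_0$ equal to $1$ on a neighborhood of $\supp(\sigma)$, supported in $\{\phi\ge\g-\zeta/2\}$, and whose gradient is concentrated in two thin shells: a \emph{low} shell $\{\g-\zeta/2\le\phi\le\g-3\zeta/8\}$ and a \emph{high} shell $\{\g+\zeta/4\le\phi\le\g+3\zeta/8\}$, the latter contained in $\{\theta\equiv 1\}$ by the hypothesis on $\theta$. Apply Theorem~\ref{thm_explicit_carleman} with parameter $\g'=\zeta/8$ to $w=\chi u_\mu$. The left-hand side controls $\tau^3\|\pp e^{\tau\phi}w\|_{L^2}^2+\tau\|\nabla\pp e^{\tau\phi}w\|_{L^2}^2$, and restriction to $\supp(\sigma)\subset\{\phi\ge\g-\zeta/8\}$ gives a lower bound of order $e^{2\tau(\g-\zeta/8)}\|\pp\sigma_\mu u\|_{H^1}^2$, up to commutators between $\pp$ and $\sigma$ which are controlled by Lemma~\ref{conjugation_with_mult}.

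\textbf{Analysing the source term.} Write $\Box w=\chi\Box u_\mu+[\Box,\chi]u_\mu$. The first term, multiplied by the Carleman weight and restricted to $\supp(\chi)\subset\Omega_\delta$, is bounded by $e^{\tau(\g+3\zeta/8)}\|\Box u\|_{L^2(\Omega_\delta)}$, and after dividing by the LHS weight $e^{\tau(\g-\zeta/8)}$ it produces a factor $e^{\tau(\zeta/2)}$ that is absorbed into the $e^{\kappa\mu}\|\Box u\|_{L^2(\Omega_\delta)}$ contribution. The commutator $[\Box,\chi]u_\mu$ splits along the two shells. On the low shell, $e^{\tau\phi}\le e^{\tau(\g-3\zeta/8)}$ so the ratio with the LHS weight gives $e^{-c\tau\zeta}$, which combined with the Carleman remainder $e^{-\a\g'^2\tau}\|e^{\tau\phi}w\|_{H^1}^2$ produces the $e^{-\kp\mu}\|u\|_{H^1}$ term. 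On the high shell $\theta\equiv 1$, so $u_\mu=\theta u_\mu$ there; writing $\theta u_\mu=\theta_\mu u+\mathrm{O}(e^{-c\mu})$ by analyticity and then inserting the further frequency decomposition $\mathrm{Id}=M^{\alpha\mu}_\mu+(I-M^{\alpha\mu}_\mu)$ yields the $e^{\kappa\mu}\|M^{\alpha\mu}_\mu\theta_\mu u\|_{H^1}$ contribution, while the high-frequency remainder $(I-M^{\alpha\mu}_\mu)\theta_\mu u$ is absorbed using that it is comparable to $e^{-c\alpha^2\mu}\|u\|_{H^1}$ times the Gaussian.

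\textbf{Frequency localization and main obstacle.} To pass from $\|\pp\sigma_\mu u\|_{H^1}$ on the LHS to $\|M^{\beta\mu}_\mu\sigma_\mu u\|_{H^1}$, compare the symbols: the Gaussian $e^{-\ve\xi_t^2/(2\tau)}$ is bounded below by a positive constant on $|\xi_t|\le\beta\mu$ as soon as $\ve(\beta\mu)^2/(2\tau)\le C$, i.e.\ $\beta^2\mu/\delta^2\le C$, which is automatic once $\beta\lesssim\delta^N$ and $\mu\ge C/(\delta^8\beta)$. For $|\xi_t|\ge\beta\mu$ the Gaussian gives an extra decay $e^{-c\beta^2\mu/\delta^2}$ which is absorbed into $e^{-\kp\mu}\|u\|_{H^1}$ provided $\kp\le c\beta^2/\delta^2$. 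The main obstacle is purely combinatorial: tracking the $\delta$-powers through every commutator, regularization, and frequency-localization estimate, and balancing the parameters $\kappa,\alpha,\beta,\kp$ so that every exponentially small factor is dominated by a single $e^{-\kp\mu}$ with the claimed scaling $\kp\sim\min(\delta^N,\kappa\delta^N,\alpha\delta^N)$, while every polynomial factor absorbs into the prefactor $\delta^{-N}$ for $N$ a sufficiently large universal power depending only on $r_0,n$.
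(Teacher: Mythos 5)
Your outline correctly identifies the major ingredients — applying Theorem~\ref{thm_explicit_carleman} to a cut-off function, splitting the commutator $[\Box,\chi]$ along two shells, recognizing that on the ``high'' shell $\theta\equiv 1$, choosing $\ve\sim\g'\sim\delta^2$, and tracking powers of $\delta$ — and this part matches the paper's Lemma~\ref{lem_commut}/Lemma~\ref{lem_application_carl}. However, the final step of your argument, the passage from $\|\pp e^{\tau\phi}w\|$ to $\|M^{\beta\mu}_\mu\sigma_\mu u\|$, contains a genuine gap that cannot be fixed within the route you propose.

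The issue is in the claim that ``restriction to $\supp(\sigma)\subset\{\phi\ge\g-\zeta/8\}$ gives a lower bound of order $e^{2\tau(\g-\zeta/8)}\|\pp\sigma_\mu u\|_{H^1}^2$, up to commutators controlled by Lemma~\ref{conjugation_with_mult},'' followed by a direct ``symbol comparison'' between the Gaussian $e^{-\ve\xi_t^2/2\tau}$ and the sharp cutoff $M^{\beta\mu}_\mu$. The Carleman inequality controls $\|\pp e^{\tau\phi}w\|_{L^2}$, i.e.\ the Gaussian multiplier is applied \emph{after} multiplication by $e^{\tau\phi}$. Since $\phi$ is quadratic in $t$, the factor $e^{-\tau t^2/2}$ is not a polynomial in $t$, and Lemma~\ref{conjugation_with_mult} does not let you commute $\pp$ through it; indeed $\tau$ and the Gaussian parameter $\ve/\tau$ are comparable (both $\sim\mu$), so there is no smallness to exploit. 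Consequently you cannot factor out the weight as $e^{\tau\phi_0}\cdot\|\pp\,(\text{localized }u)\|$, nor compare symbols, because $\pp$ and $e^{\tau\phi}$ do not approximately commute and the restriction operator does not commute with the nonlocal multiplier $\pp$ at all. This is precisely the obstruction that the complex-analysis step in the paper (Lemma~\ref{lem_complex_anal}) is designed to overcome: one encodes the weighted, Gaussian-smoothed quantity into a holomorphic function $\hat h_f(\varsigma)$ whose value at $\varsigma=i\tau$ is controlled by the Carleman estimate, uses the a priori subexponential bound and a Phragm\'en--Lindel\"of lemma (Lemma~\ref{lem_harmonic_append}) to propagate this control into the annulus $\{d\mu/4\le|\varsigma|\le 2d\mu\}$, and then deforms the contour in the Plancherel pairing against $\hat\eta_\lambda$ to extract the unweighted, frequency-localized quantity $\|M^{\beta\mu}_\mu\sigma_{2}\sigma_{1,\lambda}\tchi(\psi)\chi_\lambda(\psi)\eta_\lambda(\psi)u\|$.

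So the real obstacle is not ``purely combinatorial'' $\delta$-bookkeeping as you suggest: the bookkeeping is indeed tedious but routine, while the substantive mathematical content your proposal is missing is the holomorphic-extension/contour-deformation argument. Without it, there is no known way to deduce the stated low-frequency estimate from the Carleman inequality, because the conclusion involves a sharp frequency cutoff $M^{\beta\mu}_\mu$ of the \emph{unweighted} localized function, whereas the Carleman inequality only gives information about the Gaussian cutoff of the exponentially \emph{weighted} function. The two are not related by any elementary pointwise-in-frequency comparison.
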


   The proof of Theorem~\ref{thm local quant estimate} will be done in two main steps following Sections 3.2 and 3.3 of~\cite{Laurent_2018}. Here, we have to keep trace of the dependency of all the constants with respect to the parameter $\delta>0$. 
	
	We consider the following cut-offs that we need in order to apply the Carleman estimate of Theorem~\ref{thm_explicit_carleman}. Let $\chi \in C^\infty_0((-8\zeta,\zeta))$ such that  $\chi=1$ in $[-7\zeta, \zeta/2]$ and $\tchi \in C^\infty$ supported in $(-\infty,2\zeta))$ such that  $\tchi=1$ in $(-\infty, 3/2\zeta)$. Let $\sigma_2 \in C^\infty_0(\{ \phi \geq \delta\})$ with $\sigma_2=1$ in $\{\phi \geq 2 \delta\}$ and $\sigma_1 \in C^\infty_0(\{ \phi \geq 2 \delta\})$ with $\sigma_1=1$ in $\{\phi \geq 3 \delta\}$.

	We will use $$\psi= \phi-\g,$$
	as a weight function and apply the Carleman estimate of Theorem~\ref{thm_explicit_carleman} to $\sigma_2 \sigma_{1, \lambda}\tchi(\psi)\chil (\psi) u$.

	We start by giving an estimate for the term $ \norm{\pp e^{\tau \psi} \Box \sigma_2 \sigma_{1, \lambda}\tchi(\psi)\chil (\psi) u }{L^2}{}$ which appears in the right hand side of Theorem~\ref{thm_explicit_carleman}. We have
	\begin{align}
		&\norm{\pp e^{\tau \psi} \Box \sigma_2 \sigma_{1, \lambda}\tchi(\psi)\chil (\psi) u }{L^2}{} \\
		&\leq  \norm{\pp e^{\tau \psi}  \sigma_2 \sigma_{1, \lambda}\tchi(\psi)\chil (\psi)  \Box u }{L^2}{}+ \norm{\pp e^{\tau \psi}[\sigma_2 \sigma_{1, \lambda}\tchi(\psi)\chil (\psi) , \Box ]u }{L^2}{} \\
		&\leq   \norm{e^{\tau \psi}  \sigma_2 \sigma_{1, \lambda}\tchi(\psi)\chil (\psi)  \Box u }{L^2}{}+ \norm{\pp e^{\tau \psi}[\sigma_2 \sigma_{1, \lambda}\tchi(\psi)\chil (\psi) , \Box ]u }{L^2}{} \\
		&\label{estim_for_carl}\leq C \mu^{1/2}e^{\tau^2/\mu}e^{\zeta \tau } \norm{\Box u}{L^2(\Omega_\delta)}{}+\norm{\pp e^{\tau \psi}[\sigma_2 \sigma_{1, \lambda}\tchi(\psi)\chil (\psi) , \Box ]u }{L^2}{},
	\end{align}
	thanks to Lemma~\ref{lemma 2.13 from ll}.
	We need now to bound the commutator. This is the content of the following lemma.
	
	\begin{lem}
		\label{lem_commut}
		With the above notations, there exist $C,c,N>0$ depending on $r_0,n$ only such that for  any $\theta \in C^\infty_0(\R^{1+n})$ with $\theta(x)=1 $ on a neighborhood of $\{\phi > \g \} $ one has
		\begin{align}
			&\norm{\pp e^{\tau \psi}[\sigma_2 \sigma_{1, \lambda}\tchi(\psi)\chil (\psi) , \Box ]u }{L^2}{} \\
			& \leq \frac{C}{\delta^4} e^{2 \zeta \tau} \norm{M^{2 \mu}_\lambda \theta_\lambda u}{H^1}{}+\frac{C}{\delta^N}\mu^{1/2}\tau^N \left(e^{-8 \zeta \tau}+e^{-\frac{\ve \mu^2}{8 \tau}}+e^{-c\delta^4 \mu}e^{\zeta \tau} \right) e^{C \tau^2/ \mu}e^{\zeta \tau} \norm{u}{H^1}{},  
		\end{align}
		for all $u \in C^\infty_0(\R^{1+n})$, $\mu \geq 1$, $\lambda \sim \mu$ and $\tau \geq 1$.
	\end{lem}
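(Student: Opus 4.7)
The plan is to expand the commutator via Leibniz's rule as
\begin{align}
[\sigma_2 \sigma_{1,\lambda}\tchi(\psi)\chil(\psi),\Box]
&= [\sigma_2,\Box]\,\sigma_{1,\lambda}\tchi(\psi)\chil(\psi)
+ \sigma_2\,[\sigma_{1,\lambda},\Box]\,\tchi(\psi)\chil(\psi)\\
&\quad + \sigma_2 \sigma_{1,\lambda}\,[\tchi(\psi),\Box]\,\chil(\psi)
+ \sigma_2 \sigma_{1,\lambda}\tchi(\psi)\,[\chil(\psi),\Box],
\end{align}
and to estimate the four resulting contributions separately. Each commutator is a first-order operator whose coefficients are supported where the corresponding factor of $F = \sigma_2 \sigma_{1,\lambda}\tchi(\psi)\chil(\psi)$ has non-trivial derivatives; this is what drives the localization.

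\textbf{Upper terms (first summand of the bound).} On $\supp \nabla \tchi(\psi) \subset \{\psi \in [3\zeta/2,2\zeta]\}$ we have $e^{\tau\psi}\leq e^{2\zeta\tau}$, and $\phi > \gamma$, hence $\theta\equiv 1$ on a neighborhood of this set. The same is true on the upper branch $\{\psi \in [\zeta/2,\zeta]\}$ of $\supp \nabla \chil(\psi)$. For both contributions I would introduce $\theta$ freely, then insert the operator $M^{2\mu}_\lambda \theta_\lambda$ by writing $u = M^{2\mu}_\lambda \theta_\lambda u + (\mathrm{Id} - M^{2\mu})\theta_\lambda u + (1-\theta_\lambda)u$ modulo standard commutator errors. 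The contribution of $M^{2\mu}_\lambda \theta_\lambda u$ gives the main term $\frac{C}{\delta^4}e^{2\zeta\tau}\norm{M^{2\mu}_\lambda \theta_\lambda u}{H^1}{}$: the prefactor $\delta^{-4}$ arises because derivatives of $\chil(\psi),\tchi(\psi)$ scale like $\zeta^{-1}\sim \delta^{-2}$ and at most two such derivatives appear in $\Box F$. The remaining piece $(\mathrm{Id}-M^{2\mu})\theta_\lambda u$ is high frequency in time, and combined with $\pp$ yields the exponentially small factor $e^{-\ve\mu^2/(8\tau)}$ by direct inspection of the Fourier symbol.

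\textbf{Lower term and regularization errors (second summand of the bound).} The lower branch $\{\psi\in[-8\zeta,-7\zeta]\}$ of $\supp \nabla \chil(\psi)$ gives directly $e^{\tau\psi}\leq e^{-7\zeta\tau}$, which after combination with the $\mu^{1/2} e^{C\tau^2/\mu}e^{\zeta\tau}$ Gaussian/growth factors from Lemma~\ref{lemma 2.13 from ll} produces the $e^{-8\zeta\tau}$ piece. For the terms where $\Box$ hits $\sigma_2$ or $\sigma_{1,\lambda}$, the derivatives are supported in $\{\delta\leq\phi\leq 2\delta\}$ up to Gaussian tails. Writing $\sigma_{1,\lambda} = \sigma_1 + (\sigma_{1,\lambda}-\sigma_1)$, the ``near'' part is dominated by the contribution just described (whose weight $e^{\tau\psi}$ only reaches $e^{2\zeta\tau}$ in the intersection with $\supp \chi_\lambda(\psi)$, since $\chi$ is supported where $\psi\leq\zeta$), while the ``far'' part benefits from the Gaussian decay of $e^{-D_t^2/\lambda}$ applied to the jump-region of $\sigma_1$, which transitions on scale $\delta$; together with the $e^{\zeta\tau}$ weight one picks up $e^{-c\delta^4\mu}e^{\zeta\tau}$, where the $\delta^4$ comes from the fact that the cut-offs $\chi_\lambda(\psi),\tilde\chi(\psi)$ force the relevant distance to be at least $\zeta\sim\delta^2$, and Gaussian decay at width $\lambda^{-1/2}$ gives $e^{-c\lambda\zeta^2}\sim e^{-c\mu\delta^4}$. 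The polynomial losses $\delta^{-N}$ and $\tau^N$ absorb the scale factors from derivatives of cut-offs and from differentiating Gaussian convolutions.

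\textbf{Expected obstacle.} The only delicate step is the non-local weight $\pp$: it does not commute with the spatial cut-offs $\sigma_2,\sigma_{1,\lambda}$ nor with multiplication by the time-dependent $\tchi(\psi),\chil(\psi)$, so each localization must be done carefully using an ``almost local'' property (in the spirit of Lemma~\ref{lem_almostlocal}) to reduce to the classical setting while paying an exponentially small price. The accounting that turns all the small error sources into the three explicit factors $e^{-8\zeta\tau}$, $e^{-\ve\mu^2/(8\tau)}$, $e^{-c\delta^4\mu}e^{\zeta\tau}$ is the only bookkeeping-heavy part; it essentially repeats the procedure of \cite[\S 3.2--3.3]{Laurent_2018}, with the sole extra input being to track the explicit dependence on $\delta$ through the scales $\zeta\sim\delta^2$, $\nabla\sigma_j\sim\delta^{-1}$.
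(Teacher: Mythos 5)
Your sketch follows the same route as the paper: expand via Leibniz, split according to which factor of $\sigma_2\sigma_{1,\lambda}\tchi(\psi)\chil(\psi)$ is differentiated, treat the lower branch of $\supp\chi'$ by Lemma~\ref{lemma 2.13 from ll} (giving $e^{-7\zeta\tau}$), treat the $\sigma$-derivative terms by disjoint-support Gaussian decay (giving $e^{-c\delta^4\mu}$), treat the upper branch of $\supp\chi'$ by introducing $\theta$ and $M^{2\mu}_\lambda$ (giving the main term), and control the nonlocality of $\pp$ by almost-locality and frequency splitting, as in \cite{Laurent_2018}. One small inaccuracy worth noting: you place the $\tchi$-derivative contribution in the main-term bucket (since $\theta\equiv 1$ on $\{\psi\in[3\zeta/2,2\zeta]\}$), whereas the paper instead exploits that $\tchi'(\psi)$ and $\chi$ have $\psi$-supports separated by $\sim\zeta$, so that $\tchi'(\psi)\chil^{(k)}(\psi)$ is itself $O(e^{-c\delta^4\mu})$ — this gives the sharper bound, though your coarser version still lands inside the stated inequality.
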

	\begin{proof}
		By the Leibniz rule we can write 
		\begin{multline*}
			\partial^\alpha(\sigma_2 \sigma_{1, \lambda}\tchi(\psi)\chil (\psi)u)\\
			=\sum_{\alpha_1+\alpha_2+\alpha_3+\alpha_4+\alpha_5=\alpha}C_{(\alpha_i)}\partial^{\alpha_1}(\chi_{\lambda}(\psi))\partial^{\alpha_2}(\sigma_{2})\partial^{\alpha_3}(\sigma_{1, \lambda})\partial^{\alpha_4}(\tchi(\psi))\partial^{\alpha_5}u.
		\end{multline*}
		
		The commutator $[\Box, \sigma_2 \sigma_{1, \lambda}\tchi(\psi)\chil (\psi) ]$ can be split as a sum of differential operators of order one as follows:
		\begin{equation*}
			\sum_{|\alpha|\leq 2} [\partial^\alpha,\sigma_{2}\sigma_{1, \lambda}\tchi_{\delta}(\psi)\chi_{\delta,\lambda}(\psi)]
			=B_1+B_2+B_3+B_4,
		\end{equation*}
		where:
		
		\begin{enumerate}
			\item $B_1$ contains the terms with $\alpha_1 \neq 0$ and $\alpha_2=\alpha_4=0$;
			
			\item $B_2$ contains some terms with $\alpha_2 \neq 0$;
			
			\item $B_3$ contains the terms with $\alpha_3 \neq0$ and $\alpha_1=\alpha_2=\alpha_4=0$;
			
			\item $B_4$ contains some terms with $\alpha_4 \neq 0$.
		\end{enumerate}
		
		Using the support properties of $\chi$ we observe that $\partial^{\alpha_1}\chil (\psi)$ with $\alpha_1 \neq 0$ can be decomposed in some generic terms $\chil^{\pm}$ which are such that $\chi^{-}$ is supported in $[-8\zeta, -7\zeta]$ and some terms  $\chi_{ \lambda}^{+}$ such that $\chi^{+}$ is supported in $[\zeta/2, \zeta]$.

		\begin{enumerate}
			\item This allows to decompose $B_1$ as a sum of generic terms of the form
			\begin{equation}
				\label{def of B+-}
				B_{\pm}=b_{\pm}\partial^\gamma=f \sigma_{2}\partial^{\beta}(\sigma_{1,\lambda})\chi_{ \lambda}^{\pm}\tchi(\psi)\partial^\gamma,	
			\end{equation}

			where $|\gamma| \leq 1$, $f \in C^\infty_0(\R^{1+n})$ contains some derivatives of $\psi$ and is analytic in $t$. 
			
			\item $B_2$ consists of terms where there is at least one derivative on $\sigma_{2}$ and contains terms of the form
			$$
			\tilde{b}\partial^{\beta}(\sigma_{1, \lambda})(\chi_{\lambda})^{(k)}(\psi)\tchi(\psi)\partial^\gamma,
			$$
			where $k, |\beta|, |\gamma| \leq 1$.
			
			\item $B_3$ consists of terms where there is at least one derivative on $\sigma_{1, \lambda}$ and none on $\chi_{\lambda}(\psi), \tchi(\psi)$ and $\sigma_2$. These are terms of the form 
			$$
			f \sigma_{2}\partial^{\beta}(\sigma_{1, \lambda})\chi_{\lambda}(\psi)\tchi(\psi)\partial^\gamma.
			$$
			
			\item $B_4$ consists of terms where there is at least one derivative on $\tchi(\psi)$ and contains terms of the form
			$$
			\tilde{b}\partial^{\beta}(\sigma_{1,\lambda})(\chi_{\lambda})^{(k)}(\psi)\partial^\gamma,
			$$
			where $k, |\beta|, |\gamma|\leq 1 $.

		\end{enumerate}
		
		We notice that derivatives of $\tchi(\psi)$ and $\chi(\psi)$ can be bounded from above by $C/\delta^2$ with $C$ depending on $r_0,n$ only. We now estimate each of the generic terms considered above.
		
		\bigskip

		\noindent \textbf{Estimating $B_-$} . We use Lemma~\ref{lemma 2.13 from ll} applied to $\chi^{-}$ to find:
		\begin{equation*}
			\norm{ \pp e^{\tau \psi} B_- u}{L^2}{} \leq \norm{e^{\tau \psi}B_-}{L^2}{}\leq \frac{C}{\delta^2}\lambda^{1/2}e^{-7 \zeta \tau }e^{\frac{\tau^2}{\lambda}}\norm{u}{H^1}{}\leq \frac{C}{\delta^2}\mu^{1/2}e^{-7 \zeta \tau }e^{C \frac{\tau^2}{\mu}}\norm{u}{H^1}{}.
		\end{equation*}

		\noindent\textbf{Estimating $B_2$.} We use Lemma~\ref{lemma 2.3 from ll} applied to $\tilde{b}$ and $\partial^{\beta}(\sigma_{1, \lambda})$ which have supports away from each other. We apply then Lemma~\ref{lemma 2.13 from ll} to $\chi^{(k)}$ using its support properties to find:
		\begin{align*}
			\norm{e^{-\ve \frac{D_t^2}{2  \tau}}e^{\tau \psi} B_2 u}{L^2}{}&\leq\norm{e^{\tau \psi}B_2 u}{L^2}{}\leq \frac{C}{\delta^2} \mu^{1/2}e^{\zeta \tau}e^{C \frac{\tau^2}{\mu}}e^{-c \delta^4 \mu}\norm{u}{H^1}{}.
		\end{align*}
		
		\noindent \textbf{Estimating $B_4$.} We use Lemma~\ref{lemma 2.3 from ll} applied to $(\chi_{\lambda})^{(k)}(\psi)$ and $\mathds{1}_{[3\delta/2,2\delta]}$ to find thanks to the localization of $\tchi^\prime(\psi)$:
		$$
		\norm{e^{-\ve \frac{D_t^2}{2  \tau}}e^{\tau \psi} B_4 u}{L^2}{}\leq \norm{e^{\tau \psi}B_4 u}{L^2}{}\leq \frac{C}{\delta^2} e^{2 \zeta \tau}e^{-c \delta^4 \mu}\norm{u}{H^1}{}.
		$$

		\noindent \textbf{Estimating $B_+$.} Here the derivative does not localize \textit{exactly}. We have:
		\begin{align*}
			\norm{e^{-\ve \frac{D_t^2}{2  \tau}}e^{\tau \psi}B_+ u}{L^2}{}&\leq \norm{e^{-\ve \frac{D_t^2}{2  \tau}}M^\mu_\lambda e^{\tau \psi}B_+ u}{L^2}{}+\norm{e^{-\ve \frac{D_t^2}{2  \tau}}(1-M^\mu_\lambda) e^{\tau \psi}B_+ u}{L^2}{}\\
			&\leq \norm{M^\mu_\lambda e^{\tau \psi}B_+ u}{L^2}{}+C\lambda^{1/2}\left(e^{-\frac{\ve \mu^2}{8 \tau}} +e^{- c \mu}\right)e^{C \frac{\tau^2}{\mu}}e^{\zeta \tau}\norm{u}{H^1}{},
		\end{align*}
		where we have applied successively Lemma~\ref{lemma 2.14 from ll} and  Lemma~\ref{lemma 2.13 from ll}. We estimate now the first term in the above inequality, with $B_+=b_+\partial^\gamma$:
		$$
		\norm{M^\mu_\lambda e^{\tau \psi}B_+ u}{L^2}{}\leq\norm{M^\mu_\lambda e^{\tau \psi}b_+(1-M_\lambda^{2\mu}) \partial^\gamma u}{L^2}{}+\norm{M^\mu_\lambda e^{\tau \psi}b_+M_\lambda^{2\mu} \partial^\gamma u}{L^2}{}.
		$$
		We apply then Lemma~\ref{lemma 2.16 from ll} which gives
		$$
		\norm{M^\mu_\lambda e^{\tau \psi}b_+(1-M_\lambda^{2\mu}) \partial^\gamma u}{L^2}{}\leq \frac{C}{\delta^N} \tau^N e^{C \frac{\tau^2}{\mu}}e^{2 \zeta \tau - c\delta^4 \mu}\norm{u}{H^1}{}.
		$$
		Using the fact that
		$$
		\norm{M^\mu_\lambda e^{\tau \psi}b_+M_\lambda^{2\mu} \partial^\gamma u}{L^2}{}\leq \norm{ e^{\tau \psi}b_+M_\lambda^{2\mu} \partial^\gamma u}{L^2}{}
		$$
		we have thus obtained
		$$
		\norm{e^{-\ve \frac{D_t^2}{2  \tau}}e^{\tau \psi} B_+ u}{L^2}{}\leq \norm{ e^{\tau \psi}B_+M_\lambda^{2\mu} \partial^\gamma u}{L^2}{}+\frac{C}{\delta^N}\mu^{1/2}\tau^N\left(e^{-\frac{\ve\mu^2}{8 \tau}}+e^{\zeta \tau}e^{-c\delta^4\mu}\right)e^{C \frac{\tau^2}{\mu}}e^{\zeta \tau}\norm{u}{H^1}{}.
		$$
		We need finally to estimate $ \norm{ e^{\tau \psi}b_+M_\lambda^{2\mu} \partial^\gamma u}{L^2}{}$.

		A generic term of $B_+$ has the form 
		$$
		B_+=b_+ \partial^\gamma=f \tilde{\tilde{b}}_\lambda\chi_{ \lambda}^+(\psi)\tchi(\psi)\partial^\gamma,
		$$
		where $\tilde{\tilde{b}}=\partial^\beta(\sigma_1)$, $|\beta|\leq 1$. We decompose $\R^{1+n}$ as 
		\begin{align*}
			\R^{1+n}& =O_1\cup O_2, \quad \textnormal{with} \\
			O_1&=\{\psi \notin [\zeta/4, 2 \zeta ] \},\\
			O_2 &= \{\psi\in [\zeta/4,2\zeta]\}.
		\end{align*}
		
		For the region $O_1$ we use the fact that $\chi^+$ is supported in $[\zeta/2, \zeta]$ and then Lemma~\ref{lemma 2.3 from ll} with $f_2=\mathds{1}_{[\zeta/4, 2\zeta]^c}$.
		
		For the region $O_2$ we notice that one can find a smooth $\tilde{\theta}$ with $\tilde{\theta}=1$ on a neighborhood of $O_2$ and supported in $\{\psi>0\}$. We estimate then
		$$
		\norm{e^{\tau \psi}b_+ M^{2\mu}_\lambda \partial^\gamma u}{L^2(O_2)}{}\leq \frac{C}{\delta^2} e^{\zeta \tau } \norm{M^{2\mu}_\lambda \partial^\gamma u}{L^2(O_2)}{}\leq \frac{C}{\delta^2} e^{\zeta \tau } \norm{\tilde{\theta}_\lambda M^{2\mu}_\lambda \partial^\gamma u}{L^2}{}.
		$$
		The final step is to commute $\theta_\lambda$ with $M^{2\mu}_\lambda$. Let $\tilde{\tilde{\theta}} \in C^\infty_0$ be such that $\tilde{\tilde{\theta}}=1$ on a neighborhood of $\supp{\tilde{\theta}}$ and supported in $\{\psi>0\}$. Consequently one has that $\theta=1$ in a neighborhood of $\supp{\tilde{\tilde{\theta}}}$. We use then
		Lemma~\ref{lemma 2.6 from ll} which gives
		$$
		\norm{\tilde{\theta}_\lambda M_\lambda^{2\mu}\partial^\gamma u }{L^2}{}\leq \frac{C}{\delta^4} \norm{\tilde{\tilde{\theta}}_\lambda M_\lambda^{2\mu}u}{H^1}{}+\frac{C}{\delta^4}e^{-c \delta^4 \lambda}\norm{u}{H^1}{},
		$$
		and then Lemma~\ref{lemma 2.11 from ll}
		$$
		\norm{\tilde{\tilde{\theta}}\lambda M_\lambda^{2\mu}u}{H^1}{}\leq  \norm{M^{2\mu}_\lambda \theta_\lambda u}{H^1}{}+\frac{C}{\delta^2}  e^{-c \mu}\norm{u}{H^1}{}.
		$$
		This concludes the estimate for $B_+$.
		
		\noindent \textbf{Estimating $B_3$.} This is the same as for $B_+$ and we omit it.
		
		\bigskip
		
		Combining the estimates for the terms $B_{*}$ concludes the proof of Lemma~\ref{lem_commut}.
	\end{proof}
	
	We can now combine the commutator estimate of Lemma~\ref{lem_commut} with the explicit Carleman estimate of Theorem~\ref{thm_explicit_carleman} to obtain the following.
	
	\begin{lem}
		\label{lem_application_carl}
		With the above notations, there exist $C,c,N>0$ depending on $r_0,n$ only such that for  any $\theta \in C^\infty_0(\R^{1+n})$ with $\theta(x)=1 $ on a neighborhood of $\{\phi \geq \g \} $ one has
		\begin{align}
			&\tau^{1/2} \norm{\pp e^{\tau \psi}\sigma_2 \sigma_{1, \lambda}\tchi(\psi)\chil (\psi)u }{H^1_\tau}{}\\
			&\leq  \frac{C}{\delta^N}  \mu^{1/2}e^{\tau^2/\mu}e^{\zeta \tau } \norm{\Box u}{L^2(\Omega_\delta)}{}+\frac{C}{\delta^{N}} e^{2 \zeta \tau} \norm{M^{2 \mu}_\lambda \theta_\lambda u}{H^1}{}\\& \quad +\frac{C}{\delta^N}\mu^{1/2}\tau^N \left(e^{-8 \zeta \tau}+e^{-\frac{\ve \mu^2}{8 \tau}}+e^{-c\delta^4 \mu}e^{\zeta \tau} \right) e^{C \tau^2/ \mu}e^{\zeta \tau} \norm{u}{H^1}{},  
		\end{align}
		for all $u \in C^\infty_0(\R^{1+n})$ with $\supp(u)\subset \{r\geq \rtt_0\}$, $\lambda \sim \mu$ and $\tau \geq \frac{C}{\delta^8}$.
	\end{lem}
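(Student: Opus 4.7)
The plan is to apply the explicit Carleman estimate of Theorem~\ref{thm_explicit_carleman} directly to the cut-off and regularized function
\begin{align}
v := \sigma_2 \sigma_{1,\lambda}\tchi(\psi)\chil(\psi) u,
\end{align}
with weight $\psi = \phi - \g$ (permissible by the remark after Theorem~\ref{thm_explicit_carleman}, as translating $\phi$ by a constant does not alter $Q_\pm$) and with $\delta$ playing the role of the parameter $\g$ in the theorem. The support of $v$ lies in $\{\phi \geq \delta\}\cap \{r \geq \rtt_0\}$ by the support properties of $\sigma_2$ and the hypothesis on $u$, so the required hypotheses of Theorem~\ref{thm_explicit_carleman} are met provided $\tau \geq C/\delta^8$ and $\ve = \delta \ve_0$. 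The nonlocal factors $\sigma_{1,\lambda}$ and $\chil(\psi)$ have fast-decaying (Gaussian) tails in time, and these tails are handled exactly as in~\cite{Laurent_2018}, producing only errors of the form $e^{-c\delta^4 \mu}\|u\|_{H^1}$ that are already present in the right-hand side of Lemma~\ref{lem_commut}.

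After taking square roots and rewriting in the semiclassical norm $\|f\|_{H^1_\tau}^2 = \tau^2 \|f\|_{L^2}^2 + \|\nabla f\|_{L^2}^2$, Theorem~\ref{thm_explicit_carleman} yields
\begin{align}
\tau^{1/2}\,\|\pp e^{\tau\psi} v\|_{H^1_\tau}
\leq \frac{C}{\delta^{1/2}}\Bigl(\|\pp e^{\tau\psi}\Box v\|_{L^2(\{\phi>\delta/2\})} + e^{-\a\delta^2 \tau/2}\|e^{\tau\psi} v\|_{H^1}\Bigr).
\end{align}
I would then split $\Box v = (\sigma_2\sigma_{1,\lambda}\tchi(\psi)\chil(\psi))\Box u + [\,\sigma_2\sigma_{1,\lambda}\tchi(\psi)\chil(\psi),\Box\,]u$. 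The first piece is bounded exactly as in~\eqref{estim_for_carl} by $C\mu^{1/2} e^{\tau^2/\mu} e^{\zeta\tau}\|\Box u\|_{L^2(\Omega_\delta)}$ (the inner cutoff is supported in $\Omega_\delta$ and $\psi \leq \zeta$ on $\supp v$), while the commutator piece is exactly what Lemma~\ref{lem_commut} bounds.

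What remains is the exponentially decaying remainder $e^{-\a\delta^2\tau/2}\|e^{\tau\psi}v\|_{H^1}$. Since $\tchi(\psi)$ is supported in $\{\psi < 2\zeta\}$ and $\zeta = \a\delta^2/16$, on $\supp v$ one has $e^{\tau\psi}\leq e^{2\zeta\tau}$, hence after distributing derivatives (picking up factors of at most $1/\delta^N$ from the cutoffs) one gets $\|e^{\tau\psi} v\|_{H^1}\leq C\delta^{-N} e^{2\zeta\tau}\|u\|_{H^1}$. Combined with the prefactor, this produces a term of size $\delta^{-N}\,e^{-\a\delta^2 \tau/2 + 2\zeta \tau}\|u\|_{H^1}$, and since $2\zeta = \a\delta^2/8 < \a\delta^2/2$, the resulting decay rate is at least $e^{-3\a\delta^2\tau/8}$, which is dominated by (and hence absorbed into) the term $\mu^{1/2}\tau^N e^{-8\zeta\tau} e^{C\tau^2/\mu}e^{\zeta\tau}$ already present in the commutator estimate of Lemma~\ref{lem_commut} (for $\tau\geq C/\delta^8$ and $\lambda \sim \mu$).

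Collecting the three contributions and absorbing the prefactor $\delta^{-1/2}$ into $\delta^{-N}$ gives exactly the claimed estimate. There is no real obstacle here; the whole proof is a mechanical combination of the Carleman estimate, the commutator bound Lemma~\ref{lem_commut}, and the elementary observation~\eqref{estim_for_carl}. The only point requiring attention is the bookkeeping of the $\delta$ dependence in the prefactors $\delta^{-N}$ and in the choices $\ve = \delta \ve_0$ and $\tau \geq C/\delta^8$, which are all consistent with the statement.
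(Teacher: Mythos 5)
Your overall strategy (apply Theorem~\ref{thm_explicit_carleman} to $v=\sigma_2\sigma_{1,\lambda}\tchi(\psi)\chil(\psi)u$, split $\Box v$ into main term plus commutator, invoke \eqref{estim_for_carl} and Lemma~\ref{lem_commut}) is exactly what the paper does. However, your treatment of the residual Carleman term $e^{-\a\delta^2\tau/2}\norm{e^{\tau\psi}v}{H^1}{}$ has a genuine gap.

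You bound $\norm{e^{\tau\psi}v}{H^1}{}$ by the crude compact-support observation $e^{\tau\psi}\leq e^{2\zeta\tau}$ on $\supp v$ (valid because $\tchi(\psi)$ is compactly supported in $\{\psi<2\zeta\}$), giving a remainder of size $\delta^{-N}\tau e^{(-8\zeta+2\zeta)\tau}\norm{u}{H^1}{}=\delta^{-N}\tau e^{-6\zeta\tau}\norm{u}{H^1}{}$. You then assert this is dominated by the term $\frac{C}{\delta^N}\mu^{1/2}\tau^N e^{-8\zeta\tau}e^{C\tau^2/\mu}e^{\zeta\tau}\norm{u}{H^1}{}=\frac{C}{\delta^N}\mu^{1/2}\tau^N e^{-7\zeta\tau}e^{C\tau^2/\mu}\norm{u}{H^1}{}$ coming from Lemma~\ref{lem_commut}. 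This domination is false in general: it amounts to $e^{\zeta\tau}\lesssim\mu^{1/2}\tau^N e^{C\tau^2/\mu}$, which fails whenever $\mu$ is large compared with $\tau^2$ (so that $e^{C\tau^2/\mu}$ is bounded and $\mu^{1/2}\tau^N \ll e^{\zeta\tau}$). Since the lemma is stated uniformly for all $\lambda\sim\mu\geq 1$ and all $\tau\geq C/\delta^8$, this regime is not excluded, so your argument does not establish the claimed estimate.

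The paper avoids this by applying Lemma~\ref{lemma 2.13 from ll} directly to $e^{\tau\psi}\chil(\psi)$, using that $\chi$ (before regularization) is supported in $(-\infty,\zeta]$ — not merely $(-\infty,2\zeta]$. This yields $\norm{e^{\tau\psi}\chi_\lambda(\psi)}{L^\infty}{}\lesssim\mu^{1/2}e^{\zeta\tau}e^{\tau^2/\lambda}$, so the Carleman remainder becomes $\frac{C}{\delta^2}\mu^{1/2}\tau e^{-7\zeta\tau}e^{C\tau^2/\mu}\norm{u}{H^1}{}$, which matches the factor $e^{-8\zeta\tau}\cdot e^{\zeta\tau}\cdot e^{C\tau^2/\mu}$ appearing in the statement. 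The $e^{\tau^2/\lambda}$ loss from regularization is precisely what makes the $e^{\zeta\tau}$ (rather than $e^{2\zeta\tau}$) sharpening compatible with the non-compactly-supported $\chi_\lambda$, and this extra $\zeta\tau$ of decay is needed for the complex-analysis step (Lemma~\ref{lem_complex_anal}), which feeds the exponent $e^{-9\zeta\tau}$ into Lemma~\ref{lem_harmonic_append} after a further shift by $e^{-2\zeta\tau}$. Replacing Lemma~\ref{lemma 2.13 from ll} in this step by the crude support bound would therefore require rechecking all downstream constants; as written, your argument does not prove the stated inequality.
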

	
	\begin{proof}
		We apply the Carleman estimate of Theorem~\ref{thm_explicit_carleman} to $\sigma_2 \sigma_{1, \lambda}\tchi(\psi)\chil (\psi)u$ to find
		\begin{align}
			&  \tau^{1/2} \norm{\pp e^{\tau \psi}\sigma_2 \sigma_{1, \lambda}\tchi(\psi)\chil (\psi)u }{H^1_\tau}{} \\ &\leq \frac{C}{\delta^{1/2}} \left( \norm{\pp e^{\tau \psi} \Box \sigma_2 \sigma_{1, \lambda}\tchi(\psi)\chil (\psi)u }{L^2}{}+e^{-8\zeta \tau} \norm{e^{\tau \psi}\sigma_2 \sigma_{1, \lambda}\tchi(\psi)\chil (\psi)u}{H^1}{}\right).
		\end{align}
		Using~\eqref{estim_for_carl} and Lemma~\ref{lem_commut} yields
		\begin{align}
			& \norm{\pp e^{\tau \psi} \Box \sigma_2 \sigma_{1, \lambda}\tchi(\psi)\chil (\psi)u }{L^2}{} \leq C \mu^{1/2}e^{\tau^2/\mu}e^{\zeta \tau } \norm{\Box u}{L^2(\Omega_\delta)}{}\\
			&+\frac{C}{\delta^4} e^{2 \zeta \tau} \norm{M^{2 \mu}_\lambda \theta_\lambda u}{H^1}{}+\frac{C}{\delta^N}\mu^{1/2}\tau^N \left(e^{-8 \zeta \tau}+e^{-\frac{\ve \mu^2}{8 \tau}}+e^{-c\delta^4 \mu}e^{\zeta \tau} \right) e^{C \tau^2/ \mu}e^{\zeta \tau} \norm{u}{H^1}{}.
		\end{align}
		Lemma~\ref{lemma 2.13 from ll} gives for the other term
		\begin{align}
			e^{-8\zeta \tau} \norm{e^{\tau \psi}\sigma_2 \sigma_{1, \lambda}\tchi(\psi)\chil (\psi)u}{H^1}{} \leq \frac{C}{\delta^2} \mu^{1/2}\tau e^{-7\zeta \tau}e^{C\frac{\tau^2}{\mu}}\norm{u}{H^1}{}.
		\end{align}
		This proves Lemma~\ref{lem_application_carl} up to taking the worst power of $\delta$ and renaming $N$.
	\end{proof}
	
	\noindent \textbf{The complex analysis argument}
	
	\bigskip

	This corresponds to Section 3.3 in~\cite{Laurent_2018}. The final step of the proof of Theorem~\ref{thm local quant estimate} consists in transferring the estimate provided by Lemma~\ref{lem_commut} from $$\norm{\pp e^{\tau \psi}\sigma_{2}\sigma_{1, \lambda}\tchi(\psi)\chi_{\lambda}(\psi)u}{H^1_\tau}{}$$ to $\norm{M^{\beta\mu}_{c_1 \mu}\sigma_{c_1 \mu}u}{H^1}{}$.
	
	\begin{lem}
		\label{lem_complex_anal}
		Under the above assumptions, there exist $C, N >0$ depending on $r_0,n$ only such that for any $\kappa, c_1>0$, we have that for any $0<\beta \leq \min (\delta^{N}, \kappa \delta^{N}, c_1 \delta^{N}) $, for all $\mu \geq \frac{C}{ \delta^8 \beta}$ and $u \in C^\infty_0(\R^{1+n})$ with $\supp (u) \subset \{r\geq \rtt_0\}$, one has:
		\begin{align*}
			\norm{M^{\beta \mu}\sigma_{2}\sigma_{1, \lambda}\tchi(\psi)\chi_{\lambda}(\psi)\eta_{\lambda}(\psi)u}{H^1}{}
			& \leq \frac{C}{\delta^{N}}e^{-c(\delta, \kappa,c_1) \mu}\left(e^{\kappa \mu}\left(\norm{M^{2\mu}_\lambda\theta_\lambda u}{H^1}{} +\norm{\Box u }{}{}\right)+\norm{u}{H^1}{} \right)
		\end{align*}
		with $\lambda=2 c_1 \mu,$ and 
		$$
		\eta \in C^{\infty}_0((-4\zeta,\zeta)), \quad \eta=1 \: \textnormal{in } \: [-1/2 \zeta,1/2 \zeta].
		$$
		The constant $c(\delta, \kappa,c_1)$ satisfies $c(\delta, \kappa, c_1)\sim \min(\delta^{N}, \kappa \delta^{N}, c_1 \delta^{N})$.
	\end{lem}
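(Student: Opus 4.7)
Set $\Xi := \sigma_2\sigma_{1,\lambda}\tchi(\psi)\chi_\lambda(\psi)$; the target is an $H^1$ bound on $M^{\beta\mu}\eta_\lambda(\psi)\Xi u$, while Lemma~\ref{lem_application_carl} already controls the Carleman quantity $\tau^{1/2}\norm{\pp e^{\tau\psi}\Xi u}{H^1_\tau}{}$. The plan is to pass from the latter to the former via a Plancherel / complex-analytic argument in the time variable that exploits three ingredients: (i) the low-frequency cutoff $M^{\beta\mu}$, on whose support the Gaussian multiplier $\pp$ is bounded below; (ii) the near-$\{\psi=0\}$ concentration forced by $\eta_\lambda(\psi)$, on which $e^{-\tau\psi}$ is of moderate size $e^{O(\tau\zeta)}$; and (iii) the $t$-analyticity of $\eta_\lambda$ (strip width $\sim 1/\sqrt{\mu}$), which justifies commuting the multiplication by $\eta_\lambda(\psi)e^{-\tau\psi}$ with the Fourier multipliers $\pp$ and $M^{\beta\mu}$ up to exponentially small tails, followed by an optimization in $\tau$.

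Concretely, the central inequality is the Plancherel bound
\begin{align*}
\norm{M^{\beta\mu}v}{H^1}{}\leq e^{\ve\beta^2\mu^2/(2\tau)}\,\norm{\pp v}{H^1}{},
\end{align*}
valid because $\pp$ commutes with $\partial_t,\nabla_x$ and its symbol exceeds $e^{-\ve\beta^2\mu^2/(2\tau)}$ on $|\xi_t|\leq\beta\mu$. Apply this with $v=\eta_\lambda(\psi)\Xi u$, factor $\eta_\lambda(\psi)\Xi u=(\eta_\lambda(\psi)e^{-\tau\psi})\cdot e^{\tau\psi}\Xi u$, use the pointwise bound $|\eta_\lambda(\psi)e^{-\tau\psi}|\leq Ce^{4\tau\zeta}$ on $\supp\eta(\psi)$, and absorb the analytic-tail contribution of $\eta_\lambda$ off that support as an error of order $e^{-c\zeta^2\lambda}\sim e^{-c\delta^4\mu}$ (handled as in Lemma~\ref{lemma 2.3 from ll}). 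Substituting the bound of Lemma~\ref{lem_application_carl} on the right and choosing $\tau=c_\tau\mu$ with $c_\tau\sim c_1$ so that $\tau\sim\lambda\sim\mu$, the main exponent reduces to $\mu\cdot(\ve\beta^2/(2c_\tau)+Cc_\tau\zeta+Cc_\tau^2)$. Since $\zeta=\a\delta^2/16$, taking $\beta$ and $c_\tau$ of the form $\min(\delta^N,\kappa\delta^N,c_1\delta^N)$ dominates this exponent by $\kappa\mu$ and gathers all residual tails $e^{-c\delta^4\mu}$, $e^{-c\zeta\tau}$, $e^{-c\mu}$ into the single decay factor $e^{-c(\delta,\kappa,c_1)\mu}$ with the announced rate.

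The main technical obstacle is justifying ingredient~(iii): although $\eta_\lambda$ is analytic in $t$, the weight $e^{-\tau\psi(t,x)}$ grows as $e^{\tau t^2/2}$ in $t$, so the analyticity strip of $\eta_\lambda$ (width $\sim 1/\sqrt{\mu}$) must compensate this growth at scales $\tau\sim\mu$; this is precisely what produces the coupling factor $e^{\ve\beta^2\mu^2/(2\tau)}$ and forces the specific trade-off among $\beta$, $c_1$, and $\kappa$ appearing in the statement. The remaining steps---inserting $\eta_\lambda$ into the Carleman quantity, tracking the commutators generated by the various cutoffs in $\Xi$, and keeping every $\delta$-polynomial loss uniform---follow the scheme of Sections~3.2--3.3 of~\cite{Laurent_2018}, the only novelty being the explicit bookkeeping of constants in $\delta$.
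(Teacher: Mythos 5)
Your proposal identifies several of the right ingredients (Plancherel, Paley–Wiener for $\eta_\lambda$, and the role of the parameters $\zeta$, $\tau$, $\beta$), but the crucial step is missing and the argument as sketched does not work. The defect is in the passage
\begin{align}
\norm{\pp\,\eta_\lambda(\psi)\,\Xi u}{H^1}{}\;\stackrel{?}{\lesssim}\; e^{4\tau\zeta}\,\norm{\pp\, e^{\tau\psi}\Xi u}{H^1}{}+\textnormal{tails},
\end{align}
which you obtain by factoring $\eta_\lambda(\psi)\,\Xi u=(\eta_\lambda(\psi)e^{-\tau\psi})\cdot e^{\tau\psi}\Xi u$ and invoking a pointwise bound on $\eta_\lambda(\psi)e^{-\tau\psi}$. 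This inference is invalid: $\pp$ is a nonlocal Fourier multiplier, and one cannot pass an $L^\infty$ (or $W^{1,\infty}$) bound on a prefactor through $\pp$. In general $\pp(fw)$ is not controlled by $\norm{f}{L^\infty}{}\norm{\pp w}{}$; and attempting to commute, $\pp(fw)=f\pp w+[\pp,f]w$, fails here because the commutator $[\pp,e^{-\tau\psi}\eta_\lambda(\psi)]$ with $\tau\sim\mu$ involves $t$-derivatives of $e^{-\tau\psi}$ that are $O(\tau)$ per derivative and blow up in the region where the nonlocal kernel of $\pp$ has its mass. You flag this as "the main technical obstacle" in ingredient (iii), but the mechanism you propose to resolve it (commuting up to exponential tails and then optimizing in $\tau$) is precisely what cannot be done directly; if such a commutation estimate were available the complex-analysis argument would be superfluous, and Remark~\ref{remark_dependancy on g} explains why the nonlocal weight blocks the naive route.

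The paper's proof does something genuinely different. It introduces the distribution $h_f$ by duality against a test function $f$, so that the Fourier transform $\hat{h}_f(\varsigma)$ extends holomorphically to $\mathbb{C}$. The Carleman bound of Lemma~\ref{lem_application_carl} then gives a sharp estimate for $\hat{h}_f(i\tau)$ \emph{only on the imaginary axis}, while the a priori bound~\eqref{apriori} gives subexponential growth everywhere. The essential device is the Phragmén–Lindelöf / harmonic-measure Lemma~\ref{lem_harmonic_append}, which propagates the good decay from the imaginary axis into an annulus $\{d\mu/4\leq |\varsigma|\leq 2d\mu\}$ in $\mathbb{C}_+$; one then shifts the contour in the Plancherel integral $\int\hat{h}_f(\varsigma)\hat\eta_\lambda(-\varsigma)\,d\varsigma$ to exploit this. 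There is no commutation of $\pp$ with the weight, and no optimization over $\tau$ in the naive sense — the Carleman estimate is used for all $\tau$ simultaneously to feed the subharmonicity argument. Without the holomorphic-extension / Phragmén–Lindelöf step your sketch does not close, so this is a genuine gap rather than a different correct route.
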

	
	\begin{proof}
	For any test function $f \in \mathscr{S}(\R^{1+n})$ we define the distribution
		\begin{align}
			\langle h_f, w \rangle_{\mathcal{E}^\prime(\R), C^{\infty}(\R)}&= \langle M^{\beta \mu}f \sigma_{2}\sigma_{1, \lambda}\tchi(\psi)\chi_{\lambda}(\psi)u, w(\psi)\rangle_{\mathcal{E}^\prime(\R^{1+n}),C^\infty(\R^{1+n})} \\
			&= \langle M^{\beta \mu} \sigma_{2}\sigma_{1, \lambda}\tchi(\psi)\chi_{\lambda}(\psi)w(\psi)u, f\rangle_{\mathscr{S}^\prime(\R^{1+n}),\mathscr{S}(\R^{1+n})}
		\end{align}

		We work with $w=\eta_{ \lambda}$ and estimate the quantity $
		\langle h_f, \eta_{ \lambda} \rangle_{\mathcal{E}^\prime(\R), C^{\infty}(\R)}.
		$
		We have
		\begin{align}
			\hat{h}_f(\vs)=\langle e^{-i \vs \psi} \sigma_{2}\sigma_{1, \lambda}\tchi(\psi)\chi_{\lambda}(\psi)u,  M^{\beta \mu}f \rangle_{\mathcal{E}^\prime(\R), C^{\infty}(\R)}, \quad \vs \in \mathbb{C},
		\end{align}
		which gives the following a priori estimate
		\begin{align}
			\label{apriori}
			|\hat{h}_f(\vs)|\leq C \langle |\vs|\rangle e^{|\Im \vs| \norm{\psi}{L^\infty}{} } \norm{u}{H^1}{} \norm{f}{H^{-1}}{}, \quad \vs \in \mathbb{C}.
		\end{align}
		Using Lemma~\ref{lem_application_carl} we get for all $\tau \geq \frac{C}{\delta^8}$, $\mu \geq 1$ and $\lambda\sim \mu$
		\begin{align}
			\label{estim_imaginary}
			|\hat{h}_f(i \tau)|&= |\langle e^{\frac{\ve D^2_t}{2 \tau}} M^{\beta \mu}f, \pp e^{-\tau \psi} \sigma_{2}\sigma_{1, \lambda}\tchi(\psi)\chi_{\lambda}(\psi)u \rangle |  \nonumber \\
             &\leq \frac{C}{\delta^N} e^{\frac{\ve}{2 \tau} \beta^2 \mu^2} \norm{f}{H^{-1}}{} \bigg(  \mu^{1/2}e^{\tau^2/\mu}e^{\zeta \tau } \norm{\Box u}{L^2(\Omega_\delta)}{}+ e^{2 \zeta \tau} \norm{M^{2 \mu}_\lambda \theta_\lambda u}{H^1}{}\\& \quad +\mu^{1/2}\tau^N (e^{-8 \zeta \tau}+e^{-\frac{\ve \mu^2}{8 \tau}}+e^{-c\delta^4 \mu}e^{\zeta \tau} ) e^{C \tau^2/ \mu}e^{\zeta \tau} \norm{u}{H^1}{}\bigg).
		\end{align}
		Choosing $\lambda=2 c_1 \mu$ and defining, for $\kappa>0$, 
		\begin{align}
			D=e^{\kappa \mu} \left( \norm{M^{2 \mu}_\lambda \theta_\lambda u}{H^1}{}+\norm{\Box u }{L^2}{} \right),
		\end{align}
		we find
		\begin{align}
			|\hat{h}_f(i \tau)|\leq\frac{C}{\delta^N} \mu^{1/2}\tau^N e^{\frac{\ve}{2 \tau} \beta^2 \mu^2}  e^{C \tau^2/ \mu}e^{2\zeta \tau} (D+\norm{u}{H^1}{}) \norm{f}{H^{-1}}{} \left(e^{-\kappa \delta^4 \mu}+ e^{-\frac{\ve \mu^2}{8 \tau}}+ e^{-9\zeta \tau} \right).
		\end{align}

		We look at the quantity of interest and write thanks to Plancherel
		\begin{align}
			\langle h_f, \eta_{ \lambda} \rangle_{\mathcal{E}^\prime(\R), C^{\infty}(\R)}=\frac{1}{2 \pi} \int_\R \hat{h}_f(\vs) \hat{\eta}_\lambda(-\vs) d \vs.
		\end{align}
		Recalling that $\eta \in C^\infty_0(-4\zeta,\zeta)$ and thanks to the Paley-Wiener theorem we have for $\Im \vs \geq 0$
		\begin{align}
			&|\hat{\eta} (-\vs)| \leq C e^{4 \zeta \Im \vs}, \\
			\label{apriori two}
			& |\hat{\eta}_\lambda (-\vs)|= |e^{-\frac{\vs^2}{\lambda}} \hat{\eta}(-\vs)| \leq C e^{\frac{(\Im \vs)^2- (\Re  \vs)^2}{\lambda}}e^{4 \zeta \Im \vs}.
		\end{align}
		We now split, for $0< d \leq 1$ to be chosen later the integral
		\begin{align}
			\int_\R \hat{h}_f(\vs) \hat{\eta}_\lambda(-\vs) d \vs=I_-+I_0+I_+,
		\end{align}
		where $I_-=\int_{-\infty}^{- d \mu} \hat{\eta}_\lambda(-\vs) d \vs $, $I_0=\int_{-d \mu}^{d \mu} \hat{\eta}_\lambda(-\vs) d \vs $, $I_+=\int_{d \mu}^{+ \infty} \hat{\eta}_\lambda(-\vs) d \vs$.
		
		Using~\eqref{apriori} and~\eqref{apriori two} we find, for $\mu \geq 1$ and $\lambda=2 c_1 \mu$
		\begin{align}
			\label{simple_estim}
			|I_\pm|\leq C  \mu e^{-c d^2 \mu} \norm{u}{H^1}{} \norm{f}{H^{-1}}{}.
		\end{align}
		We now focus on $I_0$ and define
		\begin{align}
			\label{def_c0}
			c_0=\frac{C}{\delta^N}(D+\norm{u}{H^1}{})\norm{f}{H^{-1}}{},
		\end{align}
		where the constant $C$ is the one in~\eqref{estim_imaginary}. We further define
		\begin{align}
			H=\frac{1}{c_0} \mu^{-1/2}(\vs+i)^{-N}e^{i2\zeta \vs} \hat{h}_f(\vs),
		\end{align}
		so that~\eqref{estim_imaginary} gives, for $\tau \geq \frac{C}{\delta^8}$, $\mu\geq 1$ and $\lambda=2 c_1 \mu$
		\begin{align}
			H(i \tau) \leq e^{\frac{\ve}{2 \tau} \beta^2 \mu^2}  e^{C \tau^2/ \mu} \left(e^{-\kappa \delta^4 \mu}+ e^{-\frac{\ve \mu^2}{8 \tau}}+ e^{-9\zeta \tau} \right).
		\end{align}
		Thanks to~\eqref{apriori} the function $H$ has sub exponential growth so we can apply Lemma~\ref{lem_harmonic_append} which gives the existence of $M>0$ depending only on $r_0,n$ such that for any $0<d\leq \min (\delta^M, \kappa \delta^M)$, any $0<\beta \leq d \delta^M $ and for all $\mu \geq \frac{C}{\delta^8 \beta}$ we have
		\begin{align}
			|H(\vs)| \leq e^{-8 \zeta \Im \vs}, \quad \textnormal{on } \overline{Q_1} \cap \{d\mu/4 \leq |\vs| \leq 2 d \mu\},
		\end{align}
		where $Q_1=\R^*_+ +i\R^*_+$. Due to symmetry this estimate holds as well in $\R^*_+ +i\R^*_+$. Recalling then the definition of $H$ we obtain the estimate
		\begin{align}
			|\hat{h}_f(\vs)|\leq c_0 \mu^{1/2} \langle |\vs|\rangle^N e^{-6 \zeta \Im \vs}\leq C c_0 d^N \mu^{N+1/2}e^{-6 \zeta \Im \vs},
		\end{align}
		on $\mathbb{C_+}\cap \{d\mu/4 \leq |\vs| \leq 2 d \mu\} $.
		
		We now have explicit dependence of all the constants with respect to the parameter $\delta>0$ and we can conclude the proof. Using the notation of~\cite[Lemma 3.10]{Laurent_2018} we change the contour from $[-d \mu, d \mu]$ to $\Gamma_\pm^V, \Gamma^H$ and obtain the estimates
		\begin{align}
			& \hat{h}_f(\vs) \hat{\eta}_\lambda(-\vs) \leq C c_0d^N \mu^{N+1/2}e^{-2 \zeta \Im \vs}e^{-\frac{3 d^2 \mu}{8c_1}}, \quad \vs \in \Gamma_+^V \cup \Gamma_-^V,\\
			& \hat{h}_f(\vs) \hat{\eta}_\lambda(-\vs) \leq C c_0d^N \mu^{N+1/2}e^{-\zeta d \mu}e^{\frac{d^2}{8c_1}\mu}, \quad \vs \in\Gamma^H.
		\end{align}
		We now fix $d=\min(4c_1 \zeta,d_0)$ which implies
		\begin{align}
			e^{-\zeta d \mu}e^{\frac{d^2}{8c_1}\mu}\leq e^{-cd\zeta \mu}.
		\end{align}
		Combining the estimates in the three regions $\Gamma_\pm^V, \Gamma^H$ together with~\eqref{simple_estim} and recalling the definition of $c_0$ in~\eqref{def_c0} we find that for any $0<\beta\leq d \delta^M$ and all $\mu \geq \frac{C}{\delta^8 \beta}$
		\begin{align}
			&|\langle(M^{\beta \mu} \sigma_{2}\sigma_{1, \lambda}\tchi(\psi)\chi_{\lambda}(\psi)\eta_\lambda(\psi) u, f\rangle_{\mathscr{S}^\prime(\R^{1+n}),\mathscr{S}(\R^{1+n})}| \\
			&\leq \frac{1}{2 \pi} \left(|I_+|+|I_-|+|I_0|\right) \leq \frac{C}{\delta^N}  d^N \mu^{N+1/2} e^{-cd\zeta \mu}(D+\norm{u}{H^{1}}{}) \norm{f}{H^{-1}}{}.
		\end{align}
		We can then choose $d=\min (\delta^M, \kappa \delta^M, c_1 \delta^M)$ and $\beta\leq \min (\delta^{2M}, \kappa \delta^{2M}, c_1 \delta^{2M}) $ for $M$ sufficiently large depending on $r_0,n$ only. We have that for $\mu \geq \frac{C}{\delta^8 \beta}$
		\begin{align}
			|\langle M^{\beta \mu} \sigma_{2}\sigma_{1, \lambda}\tchi(\psi)\chi_{\lambda}(\psi)\eta_\lambda(\psi) u, f\rangle_{\mathscr{S}^\prime(\R^{1+n}),\mathscr{S}(\R^{1+n})}| \leq \frac{C}{\delta^{N_1}}e^{-c(\delta, \kappa,c_1) \mu}(D+\norm{u}{H^{1}}{}) \norm{f}{H^{-1}}{},
		\end{align}
		where $c(\delta, \kappa, c_1)\sim \min(\delta^{N_2}, \kappa \delta^{N_2}, c_1 \delta^{N_2})$ for some $C,N_1,N_2$ depending on $r_0,n$ only. This concludes by duality the proof of Lemma~\ref{lem_complex_anal}, up to taking the worst power of $\delta$ and renaming the constants.
	\end{proof}
	
\begin{proof}[End of the proof of Theorem~\ref{thm local quant estimate}.] In the previous lemmas we have obtained all the key estimates with explicit constants with respect to $\delta$.  We can then conclude exactly as in \cite{Laurent_2018}. Let us sketch the end of the proof for the sake of completeness.

We have:
\begin{align*}
	\norm{M^{\frac{\beta \mu}{2}}_\lambda \sigma_{2}\sigma_{1, \lambda}\tchi(\psi)\chi_{\lambda}(\psi)\eta_{\lambda}(\psi)u}{H^1}{} &\leq \norm{M^{\frac{\beta \mu}{2}}_\lambda (1-M^{\beta \mu})\sigma_{2}\sigma_{1, \lambda}\tchi(\psi)\chi_{\lambda}(\psi)\eta_{\lambda}(\psi)u}{H^1}{}\\
	&\hspace{4mm}+\norm{M^{\frac{\beta \mu}{2}}_\lambda M^{\beta \mu}\sigma_{2}\sigma_{1, \lambda}\tchi(\psi)\chi_{\lambda}(\psi)\eta_{\lambda}(\psi)u}{H^1}{}.
\end{align*}
To control the first term we use Lemma~\ref{lemma 2.3 from ll}. For the second one we use Lemma~\ref{lem_complex_anal}. We find that for any $\kappa, c_1>0$, we have that for any $0<\beta \leq  \min (\delta^{N}, \kappa \delta^{N}, c_1 \delta^{N})$, for all $\mu \geq \frac{C}{ \delta^8 \beta}$ one has:
		\begin{align*}
			\norm{M^{\frac{\beta \mu}{2}}_\lambda \sigma_{2}\sigma_{1, \lambda}\tchi(\psi)\chi_{\lambda}(\psi)\eta_{\lambda}(\psi)u}{H^1}{}
			& \leq \frac{C}{\delta^{N}}e^{-c(\delta, \kappa,c_1) \mu}\left(e^{\kappa \mu}\left(\norm{M^{2\mu}_\lambda\theta_\lambda u}{H^1}{} +\norm{\Box u }{}{}\right)+\norm{u}{H^1}{} \right)
		\end{align*}

Next we combine the above estimate with Lemma~\ref{lemma 2.11 from ll}:
\begin{align}
	\label{aux quant from ll}
	\norm{M^{\frac{\beta \lambda}{4}}\sigma_{ \lambda}u}{H^1}{}&\leq \frac{C}{\delta^2} \bigg( \norm{\sigma_{ \lambda} M^{\frac{\beta \lambda}{2}}u}{H^1}{}+e^{-c \mu }\norm{u}{H^1}{} \bigg) \nonumber \\
	&\leq  \frac{C}{\delta^2} \bigg( \norm{\sigma_\lambda M^{\frac{\beta \mu}{2}}_\lambda \sigma_{2}\sigma_{1, \lambda}\tchi(\psi)\chi_{\lambda}(\psi)\eta_{\lambda}(\psi)u}{H^1}{} \nonumber \\
	&\hspace{4mm}+\norm{\sigma_{\lambda}M^{\frac{\beta \mu}{2}}_\lambda \left(1-  \sigma_{2}\sigma_{1, \lambda}\tchi(\psi)\chi_{\lambda}(\psi)\eta_{\lambda}(\psi)\right)u}{H^1}{}+e^{-c \mu }\norm{u}{H^1}{} \bigg)\nonumber \\
	&\leq \frac{C}{\delta^{N}}e^{-c(\delta, \kappa,c_1) \mu}\left(e^{\kappa \mu}\left(\norm{M^{2\mu}_\lambda\theta_\lambda u}{H^1}{} +\norm{\Box u }{}{}\right)+\norm{u}{H^1}{} \right) \nonumber \\
	&\hspace{4mm}+ \frac{C}{\delta^2} \norm{\sigma_{\lambda}M^{\frac{\beta \mu}{2}}_\lambda \left(1-  \sigma_{2}\sigma_{1, \lambda}\tchi(\psi)\chi_{\lambda}(\psi)\eta_{\lambda}(\psi)\right)u}{H^1}{},
\end{align}
We need now to control the last term in~\eqref{aux quant from ll}. Recall that $\sigma $ has been chosen such that $\sigma \in  C^\infty_0(\{\g-\frac{\zeta}{4}\leq \phi  \leq \g+\frac{\zeta}{4}\})$ and $\sigma=1$ in a neighborhood of $\{\g-\frac{\zeta}{8}\leq \phi  \leq \g+\frac{\zeta}{8}\}$. As a consequence we have that $\sigma_1=\chi(\psi)=\tchi(\psi)=\eta(\psi)=1$ on a neighborhood of $\supp{\sigma}$. We take $\Pi \in C^{\infty}_0$ with $\Pi=1$ on a neighborhood of $\supp{\sigma}$ and such that  $\sigma_{2}=\sigma_1=\chi(\psi)=\tchi(\psi)=\eta(\psi)=1$ on a neighborhood of $\supp{\Pi}$. Then
\begin{multline}
	\label{quant aux inter}
	\norm{\sigma_{\lambda}M^{\frac{\beta \mu}{2}}_\lambda \left(1-  \sigma_{2}\sigma_{1, \lambda}\tchi(\psi)\chi_{\lambda}(\psi)\eta_{\lambda}(\psi)\right)u}{H^1}{}  \\
	\leq \norm{\sigma_{\lambda}M^{\frac{\beta \mu}{2}}_\lambda \left(1-  \sigma_{2}\sigma_{1, \lambda}\tchi(\psi)\chi_{\lambda}(\psi)\eta_{\lambda}(\psi)\right)(1-\Pi)u}{H^1}{}  \\
	+\norm{\sigma_{\lambda}M^{\frac{\beta \mu}{2}}_\lambda \left(1-  \sigma_{2}\sigma_{1, \lambda}\tchi(\psi)\chi_{\lambda}(\psi)\eta_{\lambda}(\psi)\right)\Pi u}{H^1}{}.
\end{multline}
Using the support properties of $\sigma, \sigma_2,\sigma_1, \tchi, \chi, \eta$ and $\Pi$, $1-\Pi$ we find
\begin{align}
	\label{quant aux 1}
	\norm{\sigma_{\lambda}M^{\frac{\beta \mu}{2}}_\lambda \left(1-  \sigma_{2}\sigma_{1, \lambda}\tchi(\psi)\chi_{\lambda}(\psi)\eta_{\lambda}(\psi)\right)(1-\Pi)u}{H^1}{} \leq \frac{C}{\delta^4}e^{-c \delta^4 \mu} \norm{u}{H^1}{},
\end{align}
and
\begin{align}
	\label{quant aux 2}
	&\norm{\sigma_{\lambda}M^{\frac{\beta \mu}{2}}_\lambda \left(1-  \sigma_{2}\sigma_{1, \lambda}\tchi(\psi)\chi_{\lambda}(\psi)\eta_{\lambda}(\psi)\right)\Pi u}{H^1}{}\\
    &\leq \norm{M^{\frac{\beta \mu}{2}}_\lambda \left(1-  \sigma_{2}\sigma_{1, \lambda}\tchi(\psi)\chi_{\lambda}(\psi)\eta_{\lambda}(\psi)\right) \Pi u}{H^1}{} \leq \frac{C}{\delta^4}e^{-c \delta^4 \mu} \norm{u}{H^1}{}.
\end{align}

Combining~\eqref{aux quant from ll}, \eqref{quant aux inter}, \eqref{quant aux 1}, \eqref{quant aux 2} and recalling that $\lambda=2c_1\mu$ yields that for any $\kappa, c_1>0$, we have that for any $0<\beta \leq  \min (\delta^{N}, \kappa \delta^{N}, c_1 \delta^{N})$, for all $\mu \geq \frac{C}{ \delta^8 \beta}$ and $c(\delta, \kappa, c_1)\sim \min(\delta^{N}, \kappa \delta^{N}, c_1 \delta^{N})$
\begin{equation*}
			\norm{M^{\beta \mu}_{c_1 \mu} \sigma_{c_1\mu}u}{H^1}{} \leq \frac{C }{\delta^{N}} e^{\kappa \mu}\left(\norm{M^\mu_{c_1 \mu}\theta_{c_1 \mu}u}{H^1}{}+\norm{\Box u}{L^2(\Omega_\delta)}{}\right)+\frac{C}{\delta^{N}}e^{-c(\delta, \kappa,c_1) \mu } \norm{u}{H^1}{}.
		\end{equation*}
Let finally $\kappa, \alpha>0$. Using the above property for $\mu=\alpha \mu^\prime, c_1=1/\alpha $ and $\kappa$ replaced by $\kappa/\alpha$ we obtain that for $\mu^\prime \geq \frac{C}{\delta^8 \beta \alpha}$
\begin{equation*}
			\norm{M^{\beta\alpha \mu^\prime}_{ \mu^\prime} \sigma_{\mu^\prime }u}{H^1}{} \leq \frac{C }{\delta^{N}} e^{\kappa  \mu^\prime}\left(\norm{M^{\alpha \mu^\prime}_{\mu^\prime }\theta_{\mu^\prime }u}{H^1}{}+\norm{\Box u}{L^2(\Omega_\delta)}{}\right)+\frac{C}{\delta^{N}}e^{-c(\delta, \kappa/\alpha,1/\alpha) \alpha \mu^\prime } \norm{u}{H^1}{}.
		\end{equation*}
where $c(\delta, \kappa/\alpha,1/\alpha) \alpha \sim \min(\delta^{N}, \kappa \delta^{N},  \alpha \delta^{N}) $. We then choose $\beta=  \min (\delta^{N}, \kappa \delta^{N}/\alpha, \delta^{N}/\alpha)$ and define $\beta^\prime=\beta \alpha$, $\kp=c(\delta, \kappa/\alpha,1/\alpha) \alpha$ to find that for any $\kappa, \alpha>0$ there is $\kp \sim \min(\delta^{N}, \kappa \delta^{N},  \alpha \delta^{N}) $ and $\beta^\prime=  \min(\delta^{N}, \kappa \delta^{N},  \alpha \delta^{N})$ such that we have for all $\mu^\prime \geq \frac{C}{\delta^8 \beta^\prime} $
\begin{equation*}
			\norm{M^{\beta^\prime \mu^\prime}_{ \mu^\prime} \sigma_{\mu^\prime }u}{H^1}{} \leq \frac{C }{\delta^{N}} e^{\kappa  \mu^\prime}\left(\norm{M^{\alpha \mu^\prime}_{\mu^\prime }\theta_{\mu^\prime }u}{H^1}{}+\norm{\Box u}{L^2(\Omega_\delta)}{}\right)+\frac{C}{\delta^{N}}e^{-\kp \mu^\prime } \norm{u}{H^1}{}.
		\end{equation*}

This is the desired result up to renaming $\mu^\prime$ to $\mu$ and $\beta^\prime$ to $\beta$.
\end{proof}

\section{Iteration and global estimates}
\label{sec_iteration}

The key point about the local quantitative estimate of Theorem~\ref{thm local quant estimate} is that it can be iterated to a global one, yielding the optimal stability estimate. In our specific context, we start from an estimate which is semiglobal in nature. This makes the iteration process easier to write. To facilitate notation and to make proofs easier to follow we shall use some terminology from~\cite{Laurent_2018} used to describe the iteration procedure. The following definition is motivated by the result of Theorem~\ref{thm local quant estimate}.
\begin{definition}
Fix an open subset $\Omega$ of  $\R^{1+n}=\R_t \times \R^n_x$ and two \textit{finite} collections $(V_j)_{j\in J}$ and $(U_i)_{i \in I}$ of bounded open sets in $\R^{1+n}$. We say that $(V_j)_{j\in J}$ is \textit{under the dependence} of $(U_i)_{i \in I}$, denoted

$$
(V_j)_{j\in J}  \trianglelefteq  (U_i)_{i \in I},
$$
if for any $\theta_i \in C^\infty_0(\R^{1+n})$ such that $\theta_i (t,x)=1$ on a neighborhood of $\overline{U_i}$, for any $\tilde{\theta}_j \in C^\infty_0(V_j)$ and for all $\kappa, \alpha >0$, there exist $C, \kappa^\prime, \beta, \mu_0$ such that for all $\mu \geq \mu_0$ and $u \in C^\infty_0(\R^{1+n})$ with $\supp (u) \subset \{r\geq \rtt_0\}$, one has:

$$
\sum_{j\in J} \norm{M_\mu^{\beta \mu} \tilde{\theta}_{j,\mu} u}{H^1}{} \leq Ce^{\kappa \mu}\left(\sum_{i \in I} \norm {M_\mu^{\alpha \mu} \theta_{i,\mu} u}{H^1}{}+\norm{\Box u}{L^2(\Omega)}{} \right)+Ce^{-\kappa^\prime \mu} \norm{u}{H^1}{}.
$$
\end{definition}

\bigskip
The relation $\trianglelefteq$ is not transitive. That is why the following slightly stronger notion is introduced. 

\begin{definition}
\label{def of triangle strict}
Fix an open subset $\Omega$ of  $\R^{1+n}=\R_t \times \R^n_x$ and two \textit{finite} collections $(V_j)_{j\in J}$ and $(U_i)_{i \in I}$ of bounded open sets in $\R^{1+n}$. We say that $(V_j)_{j\in J}$ is \textit{under the strong dependence} of $(U_i)_{i \in I}$, denoted by

$$
(V_j)_{j\in J}  \vartriangleleft  (U_i)_{i \in I},
$$
if there exist $\tilde{U}_i \Subset U_i $ such that $(V_j)_{j\in J}  \trianglelefteq  (\tilde{U}_i)_{i \in I}$.

\end{definition}

\bigskip

To facilitate the presentation we reproduce here some of the basic properties of the relations $\trianglelefteq$ and $\vartriangleleft$.

\begin{prop}[See Propositions 4.3 and 4.5 in \cite{Laurent_2018}]
\label{prop 4.5 in ll} One has:
\begin{enumerate}
       \item If $V_i \Subset U_i $ for any $i \in I$, then $(V_i)_{i\in I}  \vartriangleleft  (U_i)_{i \in I}$.
    
    \item $\cup_{i \in I} U_i  \trianglelefteq  (U_i)_{i \in I}$.
    
    \item If $V_i  \vartriangleleft  U_i$ for any $i \in I$, then $(V_i)_{i\in I}  \vartriangleleft  (U_i)_{i \in I}$. In particular, if $U_i \vartriangleleft U$ for any $i \in I$, then $(U_i)_{i \in I} \vartriangleleft U$.
    
    \item The relation is transitive, that is:
    
    \begin{equation*}
    [V\vartriangleleft  U \: \textnormal{and} \: U  \vartriangleleft  W    ] \Rightarrow V  \vartriangleleft W.
    \end{equation*}
\end{enumerate}
\end{prop}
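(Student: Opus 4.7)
Items (1)--(3) follow from routine applications of the localization/regularization lemmas, and the real content is the transitivity (4). For (1), I would pick intermediate open sets $\tilde U_i$ with $V_i \Subset \tilde U_i \Subset U_i$ and verify $(V_i) \trianglelefteq (\tilde U_i)$ directly: any admissible $\theta_j$ (equal to $1$ on a neighborhood of $\overline{\tilde U_j}$) is then identically $1$ on $\supp \tilde\theta_j \subset V_j$, so comparing $\norm{M^{\beta\mu}_\mu \tilde\theta_{j,\mu} u}{H^1}{}$ with $\norm{M^{\alpha\mu}_\mu \theta_{j,\mu} u}{H^1}{}$ reduces to the bookkeeping already encapsulated in Lemmas~\ref{lemma 2.6 from ll}, \ref{lemma 2.11 from ll}, and \ref{lem_almostlocal}. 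For (2), I would split any $\tilde\theta \in C^\infty_0(\cup U_i)$ via a partition of unity subordinate to $\{U_i\}$ into pieces in $C^\infty_0(U_i)$ and reduce each summand to (1). For (3), the hypothesis $V_i \vartriangleleft U_i$ supplies $\tilde U_i \Subset U_i$ with $V_i \trianglelefteq \tilde U_i$ for each $i$; summing over the finite index set $I$ and retaining the worst of the resulting constants yields $(V_i) \trianglelefteq (\tilde U_i)$, hence $(V_i) \vartriangleleft (U_i)$.

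For (4), extract $\tilde U \Subset U$ with $V \trianglelefteq \tilde U$ and $\tilde W \Subset W$ with $U \trianglelefteq \tilde W$. My plan is to interpose a cutoff $\theta \in C^\infty_0(U)$ that is identically $1$ on a neighborhood of $\overline{\tilde U}$; the strict inclusion $\tilde U \Subset U$ guarantees such $\theta$ exists. This $\theta$ plays a double role: it is admissible as an outer cutoff for $V \trianglelefteq \tilde U$ and as a test function for $U \trianglelefteq \tilde W$ (since $\supp \theta \subset U$). Given target $\kappa, \alpha > 0$ and $\tilde\theta \in C^\infty_0(V)$, I would first apply $V \trianglelefteq \tilde U$ with intermediate parameters $(\kappa_1, \alpha_1)$ to bound $\norm{M^{\beta_1\mu}_\mu \tilde\theta_\mu u}{H^1}{}$ by $Ce^{\kappa_1\mu}\bigl(\norm{M^{\alpha_1\mu}_\mu \theta_\mu u}{H^1}{} + \norm{\Box u}{L^2}{}\bigr) + Ce^{-\kappa'_1\mu}\norm{u}{H^1}{}$, and then apply $U \trianglelefteq \tilde W$ to the test function $\theta$ with parameters $(\kappa_2, \alpha)$ to bound $\norm{M^{\beta_2\mu}_\mu \theta_\mu u}{H^1}{}$ by the analogous expression with input threshold $\alpha$. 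The two estimates link together by forcing $\alpha_1 = \beta_2$ so the frequency thresholds align; then setting $\kappa_1 = \kappa_2 = \kappa/2$ keeps the total exponential loss within $\kappa$, and the final decay rate $\min(\kappa'_1, \kappa'_2 - \kappa_1)$ is positive once $\kappa$ is small enough.

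The main obstacle is precisely this parameter matching in (4): the nonlocal multipliers $M^{\alpha\mu}_\mu$ with different thresholds do not commute with spatial cutoffs, so one cannot naively substitute one estimate into another. The forced equality $\alpha_1 = \beta_2$ is feasible only because Theorem~\ref{thm local quant estimate} grants independent control of the input frequency threshold and the exponential loss. This also explains why $\vartriangleleft$, rather than $\trianglelefteq$, is the transitive notion: the strict inclusions $\tilde U \Subset U$ and $\tilde W \Subset W$ create the geometric room needed for the mediating cutoff $\theta$, a margin that plain $\trianglelefteq$ lacks, which is exactly why transitivity of $\trianglelefteq$ fails in general.
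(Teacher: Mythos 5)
Your overall strategy matches both the one in \cite{Laurent_2018} (to which the paper defers for this proposition) and the proof of the paper's explicit-constants variant, Lemma~\ref{lem_trans_explicit}. Items (1)--(3) are routine as you say, and for (4) you correctly identify the key device: interpose a cutoff $\theta \in C^\infty_0(U)$ with $\theta \equiv 1$ near $\overline{\tilde U}$, which serves simultaneously as a test function for $U \trianglelefteq \tilde W$ and as an admissible outer cutoff for $V \trianglelefteq \tilde U$, and chain the two estimates via the frequency-matching constraint $\alpha_1 = \beta_2$.

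There is, however, a genuine gap in your parameter bookkeeping in (4). Taking $\kappa_1 = \kappa_2 = \kappa/2$ and appealing to positivity of $\kappa_2' - \kappa_1$ ``once $\kappa$ is small enough'' does not work: the decay rate $\kappa_2'$ is itself a function of $\kappa_2$, and in the applications this proposition is designed for it scales linearly in $\kappa_2$. Proposition~\ref{prop_iteration}, for instance, produces $\kappa' \sim \min(\delta^N, \kappa\delta^N, \alpha\delta^N)$, so with $\kappa_1 = \kappa_2 = \kappa/2$ and $\delta<1$ one gets $\kappa_2' \sim \kappa\delta^N/2 < \kappa/2 = \kappa_1$, and shrinking $\kappa$ leaves the sign unchanged because both quantities scale linearly in $\kappa$. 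The fix --- and the order Lemma~\ref{lem_trans_explicit} takes --- is to commit to $\kappa_2 = \kappa/2$ in the application of $U \trianglelefteq \tilde W$ \emph{first}, which determines $\kappa_2'$, and only then choose the exponential loss in $V \trianglelefteq \tilde U$ to be $\tilde\kappa = \tfrac{1}{2}\min(\kappa_2',\kappa)$. This guarantees both that the cross term $e^{(\tilde\kappa - \kappa_2')\mu}$ decays and that $\tilde\kappa + \kappa/2 \leq \kappa$. The freedom to choose the second loss parameter after the first decay rate is known is exactly what makes the chaining close; prescribing both symmetrically at the outset discards it.
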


The transitivity property is of course of crucial importance since it guarantees that information can indeed be propagated. In our context, we also need to know how the constants behave during the iteration procedure. This is described in the following lemma.

\begin{lem}[Transitivity with explicit constants]
\label{lem_trans_explicit}
Suppose that $U \vartriangleleft W$ with explicit constants given by $C_1, \kappa_1^\prime, \beta_1, \mu_{0,1}$. Suppose as well that $V \vartriangleleft U$ with explicit constants given by $C_2, \kappa_2^\prime, \beta_2, \mu_{0,2}$. Then $V \vartriangleleft W$ and for any $\kappa, \alpha >0$ the explicit constants in the relation $V \vartriangleleft W$ are given by $C_3=C_1 C_2$, $\kappa_3^\prime= \min (\kappa_2^\prime, \kappa_1^\prime/2)$, $\beta_3=\beta_2(\tilde{\kappa},\beta_1)$, $\mu_{0,3}=\max(\mu_{0,1}, \mu_{0,2})$ where $\tilde{\kappa}=\frac{\min(\kappa_1^\prime, \kappa)}{2}$,$\kappa_2^\prime=\kappa_2^\prime(\tilde{\kappa},\beta_1)$, $\kappa_1^\prime=\kappa_1^\prime(\kappa/2, \alpha)$ and $\beta_1=\beta_1(\kappa/2,\alpha)$.
\end{lem}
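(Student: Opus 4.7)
The plan is to chain the two strong dependence relations via their underlying $\trianglelefteq$ relations, carefully threading a single intermediate cut-off through both applications. First I would unpack the definitions: $U\vartriangleleft W$ provides $\tilde W\Subset W$ with $U\trianglelefteq \tilde W$ (with explicit constants $C_1,\kappa_1',\beta_1,\mu_{0,1}$), and $V\vartriangleleft U$ provides $\tilde U\Subset U$ with $V\trianglelefteq \tilde U$ (with constants $C_2,\kappa_2',\beta_2,\mu_{0,2}$). To establish $V\vartriangleleft W$, I would take the very same $\tilde W$ as intermediate set and verify $V\trianglelefteq \tilde W$.

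Given target parameters $\kappa,\alpha>0$, a cut-off $\theta$ with $\theta\equiv 1$ on a neighborhood of $\overline{\tilde W}$, and $\tilde\theta\in C^\infty_0(V)$, I would choose an auxiliary cut-off $\theta^U\in C^\infty_0(U)$ equal to $1$ on a neighborhood of $\overline{\tilde U}$, which is possible exactly because $\tilde U\Subset U$. This $\theta^U$ is the crucial bridge: as a function supported in $U$ it is a legal $\tilde\theta$-type function for the relation $U\trianglelefteq \tilde W$, and as a function equal to $1$ near $\overline{\tilde U}$ it is a legal $\theta$-type function for the relation $V\trianglelefteq \tilde U$. I would first apply $U\trianglelefteq \tilde W$ with parameters $(\kappa/2,\alpha)$, obtaining $\beta_1=\beta_1(\kappa/2,\alpha)$, $\kappa_1'=\kappa_1'(\kappa/2,\alpha)$, and the bound
\begin{equation}
\|M_\mu^{\beta_1\mu}\theta^U_\mu u\|_{H^1}
\le C_1 e^{(\kappa/2)\mu}\bigl(\|M_\mu^{\alpha\mu}\theta_\mu u\|_{H^1}+\|\Box u\|_{L^2(\Omega)}\bigr)+C_1 e^{-\kappa_1'\mu}\|u\|_{H^1}.
\end{equation}
Next, setting $\tilde\kappa=\min(\kappa_1',\kappa)/2$, I would apply $V\trianglelefteq \tilde U$ with parameters $(\tilde\kappa,\beta_1)$ — the second parameter chosen precisely so that its right-hand side produces $M_\mu^{\beta_1\mu}\theta^U_\mu u$, matching the left-hand side of the previous estimate. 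This yields $\beta_3=\beta_2(\tilde\kappa,\beta_1)$, $\kappa_2'=\kappa_2'(\tilde\kappa,\beta_1)$, and a bound for $\|M_\mu^{\beta_3\mu}\tilde\theta_\mu u\|_{H^1}$.

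Substituting the first estimate into the second and collecting terms gives the conclusion. The prefactor of the main term becomes $C_1C_2 e^{(\tilde\kappa+\kappa/2)\mu}$, and the choice $\tilde\kappa\le\kappa/2$ ensures that the total exponent is at most $\kappa\mu$, as required, with constant $C_3=C_1C_2$. The coefficient of $\|u\|_{H^1}$ is $C_1C_2 e^{(\tilde\kappa-\kappa_1')\mu}+C_2 e^{-\kappa_2'\mu}$; since $\tilde\kappa\le\kappa_1'/2$ we have $\kappa_1'-\tilde\kappa\ge\kappa_1'/2$, so the overall decay rate is at least $\min(\kappa_2',\kappa_1'/2)=\kappa_3'$. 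Finally $\mu_{0,3}=\max(\mu_{0,1},\mu_{0,2})$ ensures that both constituent estimates apply simultaneously, and the $\Box u$ contributions combine into a single term with the same $e^{\kappa\mu}$ prefactor. The main obstacle is purely bookkeeping: one must track how the parameters of each application feed into the other, in particular the matching $\alpha^{(1)}=\beta_1$ which forces the nested dependence $\beta_3=\beta_2(\tilde\kappa,\beta_1(\kappa/2,\alpha))$ recorded in the statement.
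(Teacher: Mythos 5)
Your proposal is correct and follows essentially the same route as the paper: introduce a bridging cut-off $\chi$ (your $\theta^U$) supported in $U$ and identically $1$ near $\overline{\tilde U}$, apply $U\trianglelefteq\tilde W$ with parameters $(\kappa/2,\alpha)$, then apply $V\trianglelefteq\tilde U$ with parameters $(\tilde\kappa,\beta_1)$ where $\tilde\kappa=\min(\kappa_1',\kappa)/2$, and chain the two estimates. The parameter bookkeeping and the resulting constants $C_3,\kappa_3',\beta_3,\mu_{0,3}$ match the paper's proof.
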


\begin{proof}
    By definition of $\vartriangleleft$ we can find $\tilde{U} \Subset U$ such that $V \trianglelefteq \tilde{U}$ as well as $\tilde{W} \Subset W$ such that $U \vartriangleleft \tilde{W}$. 

    Consider now $\chi \in C^\infty_0 (U)$ such that $\chi=1$ on $\tilde{U}$. Let $ \kappa, \alpha >0$, $\theta \in C^\infty_0(\R^{1+n})$ with $\theta=1$ on $\tilde{W}$ and consider as well $\ttheta \in C^\infty_0(V)$. Using property $U \trianglelefteq \tilde{W}$  for $\kappa \to \kappa/2 $ and $\alpha \to \alpha$ yields the estimate
    \begin{align}
    \label{trans_lem1}
\norm{M_\mu^{\beta_1 \mu} \chi_{\mu} u}{H^1}{} \leq C_1e^{\kappa/2 \mu}\left(\norm {M_\mu^{\alpha \mu} \theta_{\mu} u}{H^1}{}+\norm{\Box u}{L^2(\Omega_\delta)}{} \right)+C_1e^{-\kappa_1^\prime \mu} \norm{u}{H^1}{},
    \end{align}
    for $\mu \geq \mu_{0,1}$ where the constants $C_1=C_1,\kappa_1^\prime, \beta_1, \mu_{0,1}$ are associated to $( \kappa/2, \alpha)$. We next use the relation $V \trianglelefteq \tilde{U}$ applied for $\alpha\to \beta_1$ and $\kappa \to \frac{\min (\kappa_1^\prime, \kappa)}{2}=\tilde{\kappa}$. This gives 
    \begin{align}
        \label{trans_lem2}
\norm{M_\mu^{\beta_2 \mu} \ttheta_{\mu} u}{H^1}{} \leq C_2 e^{\tilde{\kappa} \mu}\left(\norm {M_\mu^{\beta_1 \mu} \chi_{\mu} u}{H^1}{}+\norm{\Box u}{L^2(\Omega_\delta)}{} \right)+C_2e^{-\kappa_2^\prime \mu} \norm{u}{H^1}{},
    \end{align}
for $\mu \geq \mu_{0,2}$ where the constants $C_2=C_2(\tilde{\kappa}, \beta_1),\kappa_2^\prime, \beta_2, \mu_{0,2}$ are associated to $(\tilde{\kappa}, \beta_1)$.

We can can now combine~\eqref{trans_lem1} and~\eqref{trans_lem2} to get, for $\mu_0 \geq \max (\mu_{0,1}, \mu_{0,2})$
\begin{align}
    \norm{M_\mu^{\beta_2 \mu} \ttheta_{\mu} u}{H^1}{} &\leq C_2 e^{\tilde{\kappa} \mu} \bigg(C_1e^{\kappa/2 \mu}\left(\norm {M_\mu^{\alpha \mu} \theta_{\mu} u}{H^1}{}+\norm{\Box u}{L^2(\Omega_\delta)}{} \right)
   +C_1e^{-\kappa_1^\prime \mu} \norm{u}{H^1}{}+\norm{\Box u}{L^2(\Omega_\delta)}{} \bigg)\\&  \hspace{4mm} +C_2e^{-\kappa_2^\prime \mu} \norm{u}{H^1}{} \\
   &=C_1C_2 e^{(\tilde{\kappa}+\kappa/2) \mu } \norm {M_\mu^{\alpha \mu} \theta_{\mu} u}{H^1}{}+C_2e^{\tilde{\kappa }\mu } \left(1+C_1 e^{\kappa \mu/2}\right) \norm{\Box u }{L^2}{} \\
   &\hspace{4mm}+ \left( C_2 e^{- \kappa_2^\prime \mu} +C_1C_2 e^{(\tilde{\kappa}-\kappa_1^\prime) \mu }\right) \norm{u}{H^1}{} \\
   &\leq 2C_1C_2 e^{\kappa \mu} \left( \norm {M_\mu^{\alpha \mu} \theta_{\mu} u}{H^1}{}+\norm{\Box u}{L^2(\Omega_\delta)}{} \right)+C_1C_2(e^{- \kappa_2^\prime \mu}+e^{-\kappa_1^\prime/2 \mu}  ) \norm{u}{H^1}{},
\end{align}
where for the last inequality we used $\tilde{\kappa} +\kappa/2 \leq \kappa$ and $\tilde{\kappa} - \kappa_1^\prime \leq - \kappa_1^\prime/2$.

The lemma follows by putting together the conditions on the different constants.
\end{proof}

We start with the following proposition which essentially rephrases the result of Theorem~\ref{thm local quant estimate} in a way that will allow to iterate it.

\begin{prop}
    \label{prop_iteration}
 There exist $C,N>0$ depending only on $r_0,n$ such that for  any $\theta \in C^\infty_0(\R^{1+n})$ with $\theta(x)=1 $ on a neighborhood of $\{\phi > \g+\frac{\zeta}{16} \} $, for any $\ttheta \in C^\infty_0(\{\phi > \g- \frac{\zeta}{12}\})$ we have that for all $ \kappa, \alpha>0$ there exist $\kp \sim  \min(\delta^{N}, \kappa \delta^{N},  \alpha \delta^{N})$ and $\beta= \min(\delta^{N}, \kappa \delta^{N},  \alpha \delta^{N})$ such that for all $\mu \geq \frac{C}{ \delta^8 \beta}$ and $u \in C^\infty_0(\R^{1+n})$ with $\supp (u) \subset \{r\geq \rtt_0\}$, one has:
	\begin{equation*}
			\norm{M^{\beta \mu}_{ \mu} \ttheta_{\mu }u}{H^1}{} \leq \frac{C }{\delta^{N}} e^{\kappa  \mu}\left(\norm{M^{\alpha \mu}_{\mu }\theta_{\mu }u}{H^1}{}+\norm{\Box u}{L^2(\Omega_\delta)}{}\right)+\frac{C}{\delta^{N}}e^{-\kp \mu } \norm{u}{H^1}{}.
		\end{equation*}
    \end{prop}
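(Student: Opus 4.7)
The idea is to split $\ttheta$ using a partition of unity into a piece supported where the canonical cutoff $\sigma$ of Theorem~\ref{thm local quant estimate} equals $1$, and a piece supported where the hypothesis forces $\theta$ to equal $1$; each piece is then reduced to either Theorem~\ref{thm local quant estimate} itself or a direct commutator bound.

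Concretely, I would choose a smooth function $\rho$ of $\phi$ alone with $\rho=1$ on $\{\phi\leq \g+\zeta/14\}$ and $\rho=0$ on $\{\phi\geq \g+\zeta/12\}$, and write $\ttheta=\ttheta_1+\ttheta_2$ with $\ttheta_1=\rho(\phi)\ttheta$ and $\ttheta_2=(1-\rho(\phi))\ttheta$. Then $\supp \ttheta_1\subset \{\g-\zeta/12<\phi<\g+\zeta/12\}$ lies strictly inside $\{\g-\zeta/8<\phi<\g+\zeta/8\}$, so $\sigma\equiv 1$ on a neighborhood of $\supp \ttheta_1$; and $\supp \ttheta_2\subset \{\phi\geq \g+\zeta/14\}$ lies strictly inside $\{\phi>\g+\zeta/16\}$, so $\theta\equiv 1$ on a neighborhood of $\supp \ttheta_2$. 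The triangle inequality then reduces the proposition to bounding $\norm{M^{\beta\mu}_\mu \ttheta_{j,\mu} u}{H^1}{}$ for $j=1,2$ separately.

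For the $\ttheta_1$ piece, the plan is to mimic the closing argument of the proof of Theorem~\ref{thm local quant estimate}: repeated application of Lemma~\ref{lemma 2.11 from ll} first commutes the Fourier multiplier $M^{\beta\mu}_\mu$ past the multiplication by $\ttheta_{1,\mu}$ at the cost of a fractional loss in the frequency parameter and an exponentially small remainder, and then replaces $\ttheta_{1,\mu}$ by $\sigma_\mu$ using that $\sigma\equiv 1$ near $\supp \ttheta_1$. Theorem~\ref{thm local quant estimate} then controls $\norm{M^{\beta'\mu}_\mu \sigma_\mu u}{H^1}{}$; notice that the proposition's hypothesis $\theta\equiv 1$ on a neighborhood of $\{\phi>\g+\zeta/16\}$ is strictly stronger than the theorem's hypothesis $\theta\equiv 1$ on a neighborhood of $\{\phi>\g+\zeta/8\}$, so it is admissible. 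The $\ttheta_2$ piece is treated by exactly the same commutation but with $\theta$ in the role of reference cutoff; since $\theta\equiv 1$ near $\supp \ttheta_2$, the argument produces $\norm{M^{\beta'\mu}_\mu \theta_\mu u}{H^1}{}$ directly, and this is dominated by $\norm{M^{\alpha\mu}_\mu \theta_\mu u}{H^1}{}$ once $\beta$ is taken small enough relative to $\alpha$ so that $\beta'\mu$ lies in the plateau of the symbol of $m(\cdot/\alpha\mu)$.

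The main technical point is bookkeeping rather than a conceptual difficulty: one must verify that every commutation step produces only remainders of the form $\delta^{-N}e^{-c\mu}\norm{u}{H^1}{}$, which can be absorbed into the last term on the right-hand side of the proposition, and that the constraints on $\beta$ and $\kp$ resulting from two successive applications of Lemma~\ref{lemma 2.11 from ll} followed by Theorem~\ref{thm local quant estimate} still take the form $\min(\delta^N,\kappa\delta^N,\alpha\delta^N)$ demanded by the statement. This is precisely the same bookkeeping already carried out in the proof of Theorem~\ref{thm local quant estimate} and in Lemma~\ref{lem_trans_explicit}, applied here to the two pieces $\ttheta_1$ and $\ttheta_2$ in parallel.
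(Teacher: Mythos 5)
Your proposal is correct and takes essentially the same route as the paper: both split $\tilde\theta$ via a partition of unity in $\phi$ into a piece supported where $\sigma\equiv1$ (controlled by Theorem~\ref{thm local quant estimate} together with the commutation machinery of Lemma~\ref{lemma 2.11 from ll}) and a piece supported strictly inside $\{\theta\equiv1\}$ (controlled by a direct Fourier-localization argument via Lemma~\ref{lemma 2.3 from ll}). The paper merely packages this same bookkeeping through the relations $\trianglelefteq$/$\vartriangleleft$ of Proposition~\ref{prop 4.5 in ll} and the explicit transitivity Lemma~\ref{lem_trans_explicit}, which you carry out by hand.
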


\begin{proof}

Let $\theta \in C^\infty_0(\R^{1+n})$ such that $\theta=1$ on a neighborhood of $\{\phi >\g+\frac{\zeta}{16}\}$, let $\ttheta \in C^\infty_0(\{ \phi > \g-\frac{\zeta}{12}\})$ and $\kappa, \alpha >0$. 

Applying Theorem~\ref{thm local quant estimate} together with Lemma~\ref{lemma 2.11 from ll} gives that for any $\ttheta_1 \in C^\infty_0(\{\g-\frac{\zeta}{8} <\phi < \g+\frac{\zeta}{8} \})$ we have that there exist $\kp \sim  \min(\delta^{N}, \kappa \delta^{N},  \alpha \delta^{N})$ and $\beta= \min(\delta^{N}, \kappa \delta^{N},  \alpha \delta^{N})$ such that for all $\mu \geq \frac{C}{ \delta^8 \beta}$ and $u \in C^\infty_0(\R^{1+n})$ with $\supp (u) \subset \{r\geq \rtt_0\}$, one has
\begin{equation*}
			\norm{M^{\beta \mu}_{ \mu} \ttheta_{1,\mu }u}{H^1}{} \leq \frac{C }{\delta^{N}} e^{\kappa  \mu}\left(\norm{M^{\alpha \mu}_{\mu }\theta_{\mu }u}{H^1}{}+\norm{\Box u}{L^2(\Omega_\delta)}{}\right)+\frac{C}{\delta^{N}}e^{-\kp \mu } \norm{u}{H^1}{}.
		\end{equation*}
The above can be seen as $\{\g -\frac{\zeta}{8} <\phi < \g+\frac{\zeta}{8}\} \vartriangleleft \{ \phi > \g\}$ but with explicit constants. Using again Lemma~\ref{lemma 2.11 from ll} together with the inclusion $\{ \phi > \g +\frac{\zeta}{14}\} \subset \{ \phi > \g +\frac{\zeta}{16}\} $ we have that for $\ttheta_2 \in C^\infty_0(\{ \phi > \g +\frac{\zeta}{14}\})$
\begin{equation*}
			\norm{M^{\alpha \mu/2}_{ \mu} \ttheta_{2,\mu }u}{H^1}{} \leq \frac{C }{\delta^{N}} e^{\kappa  \mu}\left(\norm{M^{\alpha \mu}_{\mu }\theta_{\mu }u}{H^1}{}+\norm{\Box u}{L^2(\Omega_\delta)}{}\right)+\frac{C}{\delta^{N}}e^{-\kp \mu } \norm{u}{H^1}{}.
		\end{equation*}
This is essentially the proof of Property 1 of Proposition~\ref{prop 4.5 in ll} in the particular case of the inclusion $\{ \phi > \g +\frac{\zeta}{14}\} \subset \{ \phi > \g +\frac{\zeta}{16}\} $ where we repeated the proof in order to have explicit constants. 
Adding the two estimates above yields
\begin{equation*}
		\norm{M^{\beta \mu}_{ \mu} \ttheta_{1,\mu }u}{H^1}{}	+ \norm{M^{\alpha \mu/2}_{ \mu} \ttheta_{2,\mu }u}{H^1}{} \leq \frac{C }{\delta^{N}} e^{\kappa  \mu}\left(\norm{M^{\alpha \mu}_{\mu }\theta_{\mu }u}{H^1}{}+\norm{\Box u}{L^2(\Omega_\delta)}{}\right)+\frac{C}{\delta^{N}}e^{-\kp \mu } \norm{u}{H^1}{}.
		\end{equation*}
Noticing that
\begin{align}
\norm{M^{\beta \mu}_{ \mu} \ttheta_{2,\mu }u}{H^1}{} &\leq \norm{M^{\alpha \mu/2}_{ \mu} M^{\beta \mu}_{ \mu} \ttheta_{2,\mu }u}{H^1}{}+\norm{(1-M^{\alpha \mu/2}_{ \mu})M^{\beta \mu}_{ \mu} \ttheta_{2,\mu }u}{H^1}{} \\ 
&\leq  \norm{M^{\alpha \mu/2}_{ \mu} \ttheta_{2,\mu }u}{H^1}{}+\frac{C}{\delta^{N}}e^{-\kp \mu } \norm{u}{H^1}{},
\end{align}
where we used Lemma~\ref{lemma 2.3 from ll}, we find then
    \begin{equation}
    \norm{M^{\beta \mu}_{ \mu} \ttheta_{1,\mu }u}{H^1}{}+ \norm{M^{\beta \mu}_{ \mu} \ttheta_{2,\mu }u}{H^1}{} \leq \frac{C }{\delta^{N}} e^{\kappa  \mu}\left(\norm{M^{\alpha \mu}_{\mu }\theta_{\mu }u}{H^1}{}+\norm{\Box u}{L^2(\Omega_\delta)}{}\right)+\frac{C}{\delta^{N}}e^{-\kp \mu } \norm{u}{H^1}{}.
\end{equation}
This is Property 3 of Proposition~\ref{prop 4.5 in ll} saying that $\{\g -\frac{\zeta}{8} <\phi < \g+\frac{\zeta}{8}\} \vartriangleleft \{ \phi > \g\}$ and $\{ \phi > \g +\frac{\zeta}{14}\} \vartriangleleft \{ \phi > \g \} $ imply that 
\begin{align}
\label{trans1}
\bigg (\{\g -\frac{\zeta}{8} <\phi < \g+\frac{\zeta}{8}\},\{ \phi > \g +\frac{\zeta}{14}\} \bigg) \vartriangleleft \{ \phi > \g \},
\end{align}
again with explicit constants.

We now consider the sets $A=\{ \g-\frac{\zeta}{12} < \phi< \g+\frac{\zeta}{12}\}$ and $B=\{ \phi > \g+\frac{\zeta}{13} \}$. Property 2 of Proposition~\ref{prop 4.5 in ll} says that $A \cup B \trianglelefteq (A,B)$ and proceeding in a similar fashion we have an explicit estimate which is of the same form. Finally, noticing that $A \Subset \{\g -\frac{\zeta}{8} <\phi < \g+\frac{\zeta}{8}\} $ and $B \Subset \{ \phi > \g +\frac{\zeta}{14}\} $ we see that by definition of $\vartriangleleft$  property $A \cup B \trianglelefteq (A,B)$ yields 
$$
A \cup B  \vartriangleleft \bigg (\{\g -\frac{\zeta}{8} <\phi < \g+\frac{\zeta}{8}\},\{ \phi > \g +\frac{\zeta}{14}\} \bigg).
$$
Since $A \cup  B = \{\phi > \g - \frac{\zeta}{12} \}$ we have finally obtained the relation of dependence 
\begin{align}
\label{trans2}
    \{\phi > \g - \frac{\zeta}{12} \} \vartriangleleft \bigg (\{\g -\frac{\zeta}{8} <\phi < \g+\frac{\zeta}{8}\},\{ \phi > \g +\frac{\zeta}{14}\} \bigg).
\end{align}
The proposition then follows (for some different $C$ and $N$) from~\eqref{trans1} and~\eqref{trans2} along with the explicit transitivity property of Lemma~\ref{lem_trans_explicit}.
\end{proof}

Let us define $\b$ by the relation 
$$
\b \delta^2=\frac{\zeta}{12},
$$ 
where we recall that $\zeta$ is defined in~\eqref{def_of_zeta}. The statement of Proposition~\ref{prop_iteration} above can then be written as 
    $$
  \{\phi > \g-\b \delta^2\}  \vartriangleleft \{ \phi > \g \}.
    $$
The point on writing Proposition~\ref{prop_iteration} as we did is that we also state the explicit dependence of the constants hidden in the definition of $\vartriangleleft$ with respect to $\delta$. We can then iterate this result using the transitivity of $\vartriangleleft$ in order to to transfer (low frequency) information from the initial observation set, that is $\{ \phi > \g \}$, to the set $\{\phi > \delta \}$. This corresponds to the property 
    $$
    \{\phi > \delta \}  \vartriangleleft \{ \phi > \g \}.
$$
This is the content of the following proposition, where we additionally have the explicit dependence with respect to $\delta$.

\begin{prop}
\label{prop_low_quant_propagation}
    There exist $C,N>0$ depending only on $r_0,n$ such that for  any $\theta \in C^\infty_0(\R^{1+n})$ with $\theta(x)=1 $ on a neighborhood of $\{\phi > \g+\frac{\zeta}{16} \} $, for any $\ttheta \in C^\infty_0(\{\phi > \delta^2\})$ we have that for all $ \kappa, \alpha>0$ there exist $\kp   \sim \min \left(\frac{\kappa}{2^{k_\delta}}\delta^{N k_\delta} ,\alpha \delta^{N k_\delta}\right)$ and $\beta= \min \left(\frac{\kappa}{2^{k_\delta}}\delta^{N k_\delta} ,\alpha \delta^{Nk_\delta}\right)$ such that for all $\mu \geq \frac{C}{ \delta^8 \beta}$ and $u \in C^\infty_0(\R^{1+n})$ with $\supp (u) \subset \{r\geq \rtt_0\}$, one has:
	\begin{equation*}
			\norm{M^{\beta \mu}_{ \mu} \ttheta_{\mu }u}{H^1}{} \leq \frac{C}{\delta^{Nk_\delta}} e^{\kappa  \mu}\left(\norm{M^{\alpha \mu}_{\mu }\theta_{\mu }u}{H^1}{}+\norm{\Box u}{L^2(\Omega_\delta)}{}\right)+\frac{C}{\delta^{Nk_\delta}}e^{-\kp \mu } \norm{u}{H^1}{},
		\end{equation*}
where $k_\delta= \frac{1}{\delta^2}$.
    \end{prop}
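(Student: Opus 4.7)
The plan is to iterate Proposition~\ref{prop_iteration} exactly $k_\delta = 1/\delta^2$ times, employing the explicit transitivity formula of Lemma~\ref{lem_trans_explicit} to track how the constants degenerate at each composition. The key observation is that Proposition~\ref{prop_iteration} provides, with explicit constants, the single-step relation $\{\phi > \g - \b\delta^2\} \vartriangleleft \{\phi > \g\}$ where $\b\delta^2 = \zeta/12$, so $\b$ depends only on $r_0, n$. Chaining the single-step relations over the decreasing sequence of levels $\g_j = \g - j \b\delta^2$ for $j = 0, \ldots, k_\delta$ brings us from $\{\phi > \g\}$ down to $\{\phi > \g - \b\} \subset \{\phi > \delta^2\}$ for bounded starting level $\g$.

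First I would set up the telescoping chain: for $j = 0, \ldots, k_\delta - 1$, apply Proposition~\ref{prop_iteration} at level $\g_j$ with appropriate intermediate cutoffs to obtain $S_{j+1} \vartriangleleft S_j$, where $S_j = \{\phi > \g_j\}$. Each such single step comes with parameters $\beta^{(j)}, \kp^{(j)} \sim \min(\delta^N, \kappa_j \delta^N, \alpha_j \delta^N)$ and multiplicative prefactor $C/\delta^N$, the pair $(\kappa_j, \alpha_j)$ being the parameters fed in at that step.

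Next I would run the recursion for the constants prescribed by Lemma~\ref{lem_trans_explicit}. Starting from the topmost step with the prescribed input $(\kappa, \alpha)$, the rule is $\alpha_{j+1} = \beta^{(j)}$ and $\kappa_{j+1} = \min(\kp^{(j)}, \kappa_j)/2$. Inductively this yields $\beta^{(j)}, \kp^{(j)} \sim \min(\kappa/2^j, \alpha)\,\delta^{Nj}$, together with a compound multiplicative prefactor $(C/\delta^N)^j = C^j/\delta^{Nj}$. Setting $j = k_\delta$ and absorbing the $C^{k_\delta}$ factor into $\delta^{-Nk_\delta}$ (legitimate because $e^{(\log C)/\delta^2}$ is dominated by $e^{(N|\log\delta|)/\delta^2}$ for small $\delta$, at the cost of enlarging $N$) recovers the constants in the announced form.

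The main obstacle will be verifying the inductive formula for $(\beta^{(j)}, \kp^{(j)})$ rigorously. The nested minima produced by Lemma~\ref{lem_trans_explicit} require care: after the first step one must check that the ``raw'' $\delta^N$ alternative in $\min(\delta^N, \kappa_j \delta^N, \alpha_j \delta^N)$ is never the active one, which holds because both $\alpha_j = \beta^{(j-1)} \leq \delta^N$ and $\kappa_j \leq \kappa/2^{j-1}$ are already bounded by $\delta^N$ for small $\delta, \kappa$. Granted this, the recursion reduces to $\beta^{(j)} \sim \delta^N \min(\kappa_j, \alpha_j)$ and closes cleanly. Finally, since $\ttheta$ is supported in $\{\phi > \delta^2\} \subset \{\phi > \g_{k_\delta}\}$, the telescoped dependence yields the quantitative estimate of the proposition.
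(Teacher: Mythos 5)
Your proposal is essentially identical to the paper's proof: both iterate Proposition~\ref{prop_iteration} roughly $k_\delta = 1/\delta^2$ times along the levels $\g_j = \g - j\b\delta^2$, apply Lemma~\ref{lem_trans_explicit} at each composition to track the recursion $\alpha_{j+1}=\beta^{(j)}$, $\kappa_{j+1}=\min(\kp^{(j)},\kappa_j)/2$, yielding $\beta^{(j)},\kp^{(j)}\sim\min(\kappa/2^j,\alpha)\delta^{Nj}$, and finally absorb $C^{k_\delta}$ into $\delta^{-Nk_\delta}$. The only blemish is the typo ``$\{\phi>\g-\b\}\subset\{\phi>\delta^2\}$'' where the inclusion should run the other way (as you yourself state correctly in the final sentence), and it would be cleaner to note, as the paper does, that one may assume $\kappa,\alpha\le1$ WLOG before claiming the $\delta^N$ branch of the minimum is never active.
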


\begin{proof}

Let $\kappa, \alpha >0$ We can suppose without loss of generality that $\kappa, \alpha \leq 1$. We apply Proposition~\ref{prop_iteration} to find  $\{\phi > \g-\b \delta^2\}  \vartriangleleft \{ \phi > \g \}$ with explicit constants. Using the notation of Lemma~\ref{lem_trans_explicit} we have the constants $C_1=\frac{C}{\delta^N}, \kappa_1^\prime(\kappa,\alpha) \sim \min( \kappa \delta^N,\alpha \delta^N)$, $\beta_1(\kappa, \alpha)=\min(\kappa \delta^N, \alpha \delta^N)$ and $\mu_{0,1}=\frac{C}{\delta^8 \beta_1}$. A second application of Proposition~\ref{prop_iteration} yields as well $\{\phi > \g-2\b \delta^2\}  \vartriangleleft \{ \phi > \g-\b \delta^2 \}$ with similar constants. We now apply Lemma~\ref{lem_trans_explicit} to find  $\{\phi > \g-2\b \delta^2\}  \vartriangleleft \{ \phi > \g \}$ and we need to trace the constants. We have $C_3=C_1 C_2= \frac{C^2}{\delta^{2N}}$, $\beta_1=\beta(\kappa/2, \alpha)=\min(\kappa/2 \delta^N, \alpha \delta^N)\sim \kappa_1^\prime$, $\tilde{\kappa}=\frac{\min(\kappa_1^\prime, \kappa)}{2} \sim \min(\kappa/2 \delta^N, \alpha \delta^N)$ and $\kappa_2^\prime=\kappa_2^\prime(\tilde{\kappa},\beta_1)\sim \min (\kappa/2 \delta^{2N}, \alpha \delta^{2N})$. As a consequence $\kappa_3^\prime= \min (\kappa_2^\prime, \kappa_1^\prime/2)\sim \min (\kappa/2 \delta^{2N}, \alpha \delta^{2N})$, $\beta_3= \beta_2(\tilde{\kappa},\beta_1)=\min(\tilde{\kappa} \delta^N, \beta_1 \delta^N)=\min (\kappa/2 \delta^{2N}, \alpha \delta^{2N})$ and $\mu_{0,3}= \max(\mu_{0,1},\mu_{0,2})=\frac{C}{\delta^8 \beta_3}$.

This one step allows to see how the constants evolve after one iteration. We can now continue this procedure to find that after $k$ iterations we propagate
$$
\{ \phi > \g-(k+1) \b \delta^2 \} \vartriangleleft \{ \phi > \g \},
$$
with associated constants
\begin{itemize}
    \item $C_k= \frac{C^{k+1}}{\delta^{N(k+1)}}$,

    \item  $\kappa_k^\prime \sim \min (\frac{\kappa}{2^k}\delta^{N(k+1)} ,\alpha \delta^{N(k+1)})$,

    \item  $\beta_k= \min \left(\frac{\kappa}{2^k}\delta^{N(k+1)} ,\alpha \delta^{N(k+1)}\right)$,

    \item $\mu_{0, k}=\frac{C}{\delta^8 \beta_k}$.
\end{itemize}

Now remark that the number of iterations $k=k_\delta$ needed to reach $\{ \phi > \delta\}$ should satisfy
$$
\g-(k_\delta+1)\b \delta^2 \leq \delta,
$$
which gives $k_\delta \sim \frac{1}{\delta^2}$ and we have the estimate
\begin{equation*}
			\norm{M^{\beta \mu}_{ \mu} \ttheta_{\mu }u}{H^1}{} \leq \frac{C^{k_\delta}}{\delta^{Nk_\delta}} e^{\kappa  \mu}\left(\norm{M^{\alpha \mu}_{\mu }\theta_{\mu }u}{H^1}{}+\norm{\Box u}{L^2(\Omega_\delta)}{}\right)+\frac{C^{k_\delta}}{\delta^{Nk_\delta}}e^{-\kp \mu } \norm{u}{H^1}{}.
		\end{equation*}
This proves Proposition~\ref{prop_low_quant_propagation} up to using that
$\frac{C^{k_\delta}}{\delta^{Nk_\delta}} \leq \frac{C}{\delta^{2Nk_\delta}}$ and renaming $N$.
\end{proof}

\section{Applications}
\label{sec_applications}

We are now ready to prove our main results as stated in Section~\ref{sec_main_results}. The following quantitative estimate is essentially a consequence of Proposition~\ref{prop_low_quant_propagation}, but here we also take into account high frequencies.

We recall the notation $\Omega_\delta= \{\phi > \delta \}$.

\begin{prop}
\label{prop_key}
    
There exist $C,N >0$ depending on $r_0,n$ only such that for $\kappa= \delta^N$ and $\mu_0=C^{k_\delta}$ one has for all  $u \in C^\infty_0(\R^{1+n})$ with $\supp (u) \subset \{r\geq \rtt_0\}$ and all $\mu \geq \mu_0$
\begin{align}
 \norm{ u }{L^2 ( \Omega_{3 \delta})}{} \leq  \frac{C}{\delta^{Nk_\delta}} e^{ \kappa  \mu}\left(\norm{u}{H^1_x(\{\phi > \g\})}{}+\norm{\Box u}{L^2(\Omega_\delta)}{}\right)+\frac{C}{\delta^{Nk_\delta} \mu } \norm{u}{H^1}{},
\end{align}
where $k_\delta=\frac{1}{\delta^2}$
\end{prop}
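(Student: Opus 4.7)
The plan is to apply Proposition~\ref{prop_low_quant_propagation} with a careful choice of cutoffs and parameters, and then to convert the regularized, time-frequency-localized quantities appearing there into the physical-space norms of the statement. Fix $\ttheta \in C^\infty_0(\{\phi > \delta^2\})$ with $\ttheta \equiv 1$ on a neighborhood of $\Omega_{2\delta}$ (which contains $\Omega_{3\delta}$), and $\theta \in C^\infty_0(\{\phi > \g\})$ with $\theta \equiv 1$ on a neighborhood of $\{\phi > \g + \zeta/16\}$. Apply Proposition~\ref{prop_low_quant_propagation} with $\kappa = \delta^N$ ($N$ the one in that proposition) and $\alpha = 1$; this yields parameters $\beta, \kp \sim \delta^{N(k_\delta+1)}/2^{k_\delta}$, hence $1/\beta \lesssim 2^{k_\delta} \delta^{-N(k_\delta+1)}$. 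The required lower bound $\mu \geq C/(\delta^8\beta)$ takes the form $\mu \geq C^{k_\delta}$ after enlarging the base constant $C$, matching the announced $\mu_0$.

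The second step is to pass from $\norm{M^{\beta\mu}_\mu \ttheta_\mu u}{H^1}{}$ to $\norm{u}{L^2(\Omega_{3\delta})}{}$. The triangle inequality gives
\begin{align}
\norm{u}{L^2(\Omega_{3\delta})}{} \leq \norm{M^{\beta\mu}_\mu \ttheta_\mu u}{L^2}{} + \norm{(I - M^{\beta\mu}_\mu)\ttheta_\mu u}{L^2}{} + \norm{(1-\ttheta_\mu)u}{L^2(\Omega_{3\delta})}{}.
\end{align}
The first term is controlled by the $H^1$-norm bounded in Proposition~\ref{prop_low_quant_propagation}. The second is a high-frequency tail: the symbol of $I - M^{\beta\mu}_\mu$ is bounded by $\min(\xi_t^2/(\beta\mu)^2, 1)$ on the low-frequency region and by $1$ on the tail, so Plancherel gives $\norm{(I-M^{\beta\mu}_\mu)\ttheta_\mu u}{L^2}{} \leq (C/(\beta\mu))\,\norm{\p_t(\ttheta_\mu u)}{L^2}{} \leq (C/(\beta\mu))\,\norm{u}{H^1}{}$. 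The third term is exponentially small: since $\ttheta \equiv 1$ at distance $\gtrsim \delta$ from $\Omega_{3\delta}$, the almost-locality of the Gaussian regularization (Lemma~\ref{lem_almostlocal}) yields $\norm{(1-\ttheta_\mu)u}{L^2(\Omega_{3\delta})}{} \lesssim e^{-c\mu\delta^2}\norm{u}{H^1}{}$.

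The third step is to bound the observation term $\norm{M^{\alpha\mu}_\mu \theta_\mu u}{H^1}{}$ by $\norm{u}{H^1_x(\{\phi > \g\})}{}$. Since $M^{\alpha\mu}_\mu$ commutes with $\nabla_x$ and is supported in time-frequency $|\xi_t| \lesssim \alpha\mu$, one has $\norm{M^{\alpha\mu}_\mu \theta_\mu u}{H^1}{} \leq C \alpha\mu \bigl( \norm{\theta_\mu u}{L^2}{} + \norm{\nabla_x \theta_\mu u}{L^2}{}\bigr)$. The regularization $\theta_\mu$ is replaced by $\theta$ up to an exponentially small remainder (again by almost-locality), and since $\theta$ is supported in $\{\phi > \g\}$, this yields $\norm{M^{\alpha\mu}_\mu \theta_\mu u}{H^1}{} \leq C\alpha\mu\,\norm{u}{H^1_x(\{\phi > \g\})}{} + \text{(exponentially small)}$. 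The polynomial prefactor $\alpha\mu$ is absorbed into $e^{\kappa\mu}$ by doubling $\kappa$: for $\mu \geq \mu_0 = C^{k_\delta}$ and $\kappa = \delta^N$, one has $\alpha\mu \leq e^{\kappa\mu}$.

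Combining the three reductions, the factors $2^{k_\delta}/\delta^{N(k_\delta+1)}$ coming from $1/\beta$ are absorbed into $\delta^{-Nk_\delta}$ after enlarging $N$, while the exponentially small remainders $e^{-\kp\mu}\norm{u}{H^1}{}$ and $e^{-c\mu\delta^2}\norm{u}{H^1}{}$ are dominated by $(C/(\delta^{Nk_\delta}\mu))\,\norm{u}{H^1}{}$, yielding the announced estimate. The main obstacle is the bookkeeping: systematically tracking how the parameters $\kappa, \alpha, \beta, \kp$ and the factor $2^{k_\delta}/\delta^{N(k_\delta+1)}$ propagate through the reductions, while ensuring the final remainder is genuinely of order $C/(\delta^{Nk_\delta}\mu)$ (and not, say, $C/(\delta^{Nk_\delta}\sqrt{\mu})$) — which forces using the sharp low-frequency cutoff $I-M^{\beta\mu}$ rather than the Gaussian regularization alone to handle the high time-frequency tail in step two.
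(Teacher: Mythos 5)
Your proof follows essentially the same route as the paper: first apply Proposition~\ref{prop_low_quant_propagation} with a near cutoff equal to one on a large part of $\Omega_\delta$ and a far cutoff supported in the observation region, then strip off the Gaussian regularization and the low-frequency Fourier multiplier via the triangle inequality, almost-locality, and a high-frequency tail estimate, and finally absorb the polynomial prefactors into $e^{\kappa\mu}$. The paper uses slightly different cutoff names ($\chi, \tchi$ rather than your single $\ttheta$) but the decomposition and every step is parallel.

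One step is stated imprecisely. You claim the symbol of $I-M^{\beta\mu}_\mu$ is bounded by $\min\bigl(\xi_t^2/(\beta\mu)^2,1\bigr)$ and deduce
$\norm{(I-M^{\beta\mu}_\mu)\ttheta_\mu u}{L^2}{} \leq (C/(\beta\mu))\norm{\p_t(\ttheta_\mu u)}{L^2}{}$.
This cannot hold as stated: $M^{\beta\mu}_\mu$ is the \emph{regularized} cutoff $m_\mu(\xi_t/(\beta\mu))$, so $1-m_\mu(0)\neq 0$ (it is only $O(e^{-c\mu})$), and hence the quotient by $|\xi_t|$ alone is unbounded near $\xi_t=0$. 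The paper avoids this by dividing instead by $|\xi_t|+\langle\xi_x\rangle$, i.e.\ by bounding the operator from $H^1$ to $L^2$: the exponentially small low-frequency piece is then controlled by $\langle\xi_x\rangle\geq 1$, and splitting into $|\xi_t|\gtrless\beta\mu/2$ gives the same $C/(\beta\mu)$ bound. Your final inequality $\norm{(I-M^{\beta\mu}_\mu)\ttheta_\mu u}{L^2}{}\leq (C/(\beta\mu))\norm{u}{H^1}{}$ is therefore correct, but the intermediate reduction to $\norm{\p_t(\ttheta_\mu u)}{L^2}{}$ is not — you need the full $H^1$ norm (or a frequency splitting) to absorb the exponentially small residue at $\xi_t=0$.

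Two smaller remarks on bookkeeping. In your step 3, the bound $\norm{M^{\alpha\mu}_\mu\theta_\mu u}{H^1}{}\leq C\alpha\mu\norm{u}{H^1_x(\{\phi>\g\})}{}+(\text{exp.\ small})$ also picks up a factor $\delta^{-N}$ from the $W^{1,\infty}$-norm of the cutoff; as you note, this is harmless because it is absorbed into $\delta^{-Nk_\delta}$. Regarding $\mu_0$: your identification $C/(\delta^8\beta)\sim 2^{k_\delta}\delta^{-N(k_\delta+1)-8}$ is consistent with the statement of Proposition~\ref{prop_low_quant_propagation}, whereas the paper's own proof of Proposition~\ref{prop_key} drops the $\delta^{Nk_\delta}$ factor (writing $\beta=\kappa/2^{k_\delta}$). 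Rewriting $2^{k_\delta}\delta^{-N(k_\delta+1)}$ as $C^{k_\delta}$ with $C$ depending only on $(r_0,n)$ is not literally possible (it requires $C\geq\delta^{-N}$); this does not affect the final form of $\ct(\delta)$ since $\delta^{-Nk_\delta}$ is of the same order, but you should be aware that the constant $\mu_0$ is not of the clean form $C^{k_\delta}$ if one keeps the $\delta^{Nk_\delta}$ factor.
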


\begin{proof}
    Let $\chi \in C^\infty_0(\{ \phi > \delta\})$ such that $\chi=1$ on $\{ \phi > 2\delta\}$. Consider as well $\theta \in C^\infty_0(\{ \phi > \g \})$ with $\theta=1$ on $\{ \phi > \g+\zeta/16\}$. We apply then Proposition~\ref{prop_low_quant_propagation} which implies that for any $\kappa >0$ and $\alpha=1$ there exist $\kp \sim \frac{\kappa}{2^{k_\delta}}$ and $\beta=\frac{\kappa}{2^{k_\delta}}$ such that for all $\mu \geq \mu_0 =\frac{C}{ \delta^8 \beta}$ one has:
	\begin{equation}
    \label{application_unfolding}
			\norm{M^{\beta \mu}_{ \mu} \chi_{\mu }u}{H^1}{} \leq \frac{C}{\delta^{Nk_\delta}} e^{\kappa  \mu}\left(\norm{M^{\mu}_{\mu }\theta_{\mu }u}{H^1}{}+\norm{\Box u}{L^2(\Omega_\delta)}{}\right)+\frac{C}{\delta^{Nk_\delta}}e^{-\kp \mu } \norm{u}{H^1}{},
		\end{equation}
with $k_\delta= \frac{1}{\delta^2}$.

We consider $\ttheta \in C^\infty_0(\{ \phi > \g\})$ such that $\ttheta=1$ on $\{\phi > \g+\zeta/16 \}$. Using Lemma~\ref{lemma 2.3 from ll} and recalling the definition of the Fourier multiplier $M^\mu_\mu$ we obtain
\begin{align}
   \norm{M^{ \mu}_{\mu }\theta_{\mu }u}{H^1}{} &\leq   \norm{M^{ \mu}_{\mu } \ttheta \theta_{\mu }u}{H^1}{}+ \norm{M^{ \mu}_{\mu }(1- \ttheta)\theta_{\mu }u}{H^1}{} \\
   &\leq \norm{M^{ \mu}_{\mu } \ttheta \theta_{\mu }u}{H^1}{}+\frac{C}{\delta^N}e^{-c\delta^N \mu} \norm{u}{H^1}{}\\
   \label{thm_ineq2}
   &\leq \frac{C \mu}{\delta^N} \norm{u}{H^1_x(\{\phi > \g\})}{}+\frac{C}{\delta^N}e^{-c\delta^N \mu} \norm{u}{H^1}{}.
\end{align}
We go back to~\eqref{application_unfolding} and choose $\kappa=\frac{c \delta^N}{4} <\frac{c \delta^N}{2} $ to get
\begin{align}
\label{thm_ineq3}
    \norm{M^{\beta \mu}_{ \mu} \chi_{\mu }u}{H^1}{} \leq \frac{C}{\delta^{Nk_\delta}} e^{2 \kappa  \mu}\left(\norm{M^{\mu}_{\mu }\theta_{\mu }u}{H^1}{}+\norm{\Box u}{L^2(\Omega_\delta)}{}\right)+\frac{C}{\delta^{Nk_\delta}}e^{-\kp \mu } \norm{u}{H^1}{},
\end{align}
for some different $N$.

Let now $\tchi \in C^\infty_0(\{ \phi > 2 \delta\})$ with $\tchi=1$ on $\{ \phi > 3 \delta\}$. We use again Lemma~\ref{lemma 2.3 from ll} to bound
\begin{align}
  \norm{\tchi u}{L^2}{} &\leq \norm{\tchi \chi_\mu u }{L^2}{}  +\norm{\tchi (1-\chi_\mu) u }{L^2}{} \leq \norm{\chi_\mu u }{L^2}{}+\frac{C}{\delta^N} e^{-c \delta^N \mu } \norm{u}{H^1}{} \\
  &
  \label{thm_ineq_triangle}
  \leq \norm{M_\mu^{\beta \mu}\chi_\mu u }{L^2}{}+\norm{(1-M_\mu^{\beta \mu})\chi_\mu u }{L^2}{}+\frac{C}{\delta^N} e^{-c \delta^N \mu } \norm{u}{H^1}{}.
\end{align}
The only remaining term to bound is the high frequency term $\norm{(1-M_\mu^{\beta \mu})\chi_\mu u }{L^2}{}$. We write
\begin{align*}
\norm{(1-M^{\beta \mu}_\mu) \chi_\mu u}{L^2}{}\leq  \norm{\frac{\left(1-m_\mu\left(\frac{\xi_t}{\beta \mu}\right)\right)}{|\xi_t|+\langle \xi_x \rangle }}{L^\infty}{} \norm{u}{H^1}{}.
\end{align*}
In the region $|\xi_t| \geq \frac{\beta \mu}{2}$ we simply have
$$
\frac{\left(1-m_\mu\left(\frac{\xi_t}{\beta \mu}\right)\right)}{|\xi_t|+\langle \xi_x \rangle} \leq \frac{C}{ \beta \mu}.
$$
In the region $|\xi_t |\leq \frac{\beta \mu}{2}$ we use the support of $m$ and Lemma~\ref{lemma 2.3 from ll} to get
$$
\frac{\left(1-m_\mu\left(\frac{\xi_t}{\beta \mu}\right)\right)}{|\xi_t|+\langle \xi_x \rangle} \leq C e^{- c \beta^2 \mu}.
$$
In particular we have the estimate
$$
\norm{\frac{\left(1-m_\mu\left(\frac{\xi_t}{\beta \mu}\right)\right)}{|\xi_t|+\langle \xi_x \rangle}}{L^\infty}{} \leq \frac{C}{ \beta \mu},
$$
and therefore
\begin{align}
\label{thm_ineq4}
  \norm{(1-M^{\beta \mu}_\mu) \chi_\mu u}{L^2}{}   \leq \frac{C}{ \beta \mu} \norm{u}{H^1}{}.
\end{align}

Collecting~\eqref{application_unfolding}, \eqref{thm_ineq2}, \eqref{thm_ineq3}, \eqref{thm_ineq_triangle} and~\eqref{thm_ineq4}  yields the estimate
\begin{align}
 \norm{\tchi u }{L^2}{} &\leq \frac{C}{\delta^{Nk_\delta}} e^{2 \kappa  \mu}\left(\norm{u}{H^1_x(\{\phi > \g\})}{}+\norm{\Box u}{L^2(\Omega_\delta)}{}\right)+\left(\frac{C}{\delta^{Nk_\delta}}e^{-\kp \mu }+\frac{C}{\beta \mu }+\frac{C e^{-c \delta^N \mu }}{\delta^N} \right) \norm{u}{H^1}{}  \\
 &\leq  \frac{C}{\delta^{Nk_\delta}} e^{2 \kappa  \mu}\left(\norm{u}{H^1_x(\{\phi > \g\})}{}+\norm{\Box u}{L^2(\Omega_\delta)}{}\right)+\frac{C}{\delta^{Nk_\delta} \mu } \norm{u}{H^1}{}.
\end{align}
Recalling that $\tchi=1$ on $\{ \phi > 3 \delta \}$ finishes the proof, after renaming the constants.
\end{proof}

We recall the notation for the diamond
\begin{align}
   \D=\{ \phi>0\} \cap \{t \in [-\frac{\rtt}{2}, \frac{\rtt}{2}]\}, 
\end{align}
and the cylinder
\begin{align}
    \C=\{r\leq \rtt\} \times [-\frac{\rtt}{2}, \frac{\rtt}{2}].
\end{align}
We are now ready to prove Theorem~\ref{thm_log_close_tocone}.

\begin{proof}[Proof of Theorem~\ref{thm_log_close_tocone}]
    Let $\chi \in C^\infty_0(\R^{1+n})$ with $\supp(\chi) \subset \{r \leq \rtt_0\}$ and $\chi=1$ on $\{r\geq \rtt \} \cap \D $. We start by writing 
    $$
    \norm{u}{L^2(\Omega_\delta)}{}\leq  \norm{\chi u}{L^2(\Omega_\delta)}{}+ \norm{(1-\chi)u}{L^2(\Omega_\delta)}{}
    $$
Using the support properties of $\chi$ and $u$, we can apply Proposition~\ref{prop_key} with $\g=\frac{r^2_0}{8}$ to the first term to find that there exist $C,N >0$ depending only on $r_0,n$ such that for $\kappa= \delta^N$ and $\mu_0=C^{k_\delta}$ one has for all $\mu \geq \mu_0$
\begin{align}
\norm{\chi u}{L^2(\Omega_{3\delta)}}{}  &\leq   \frac{C}{\delta^{Nk_\delta}} e^{ \kappa  \mu}\left(\norm{\chi u}{H^1_x(\{\phi > \g\})}{}+\norm{\Box  (\chi u)}{L^2(\Omega_\delta)}{}\right)+\frac{C}{\delta^{Nk_\delta} \mu } \norm{ \chi u}{H^1}{} \\
 &\leq \frac{C}{\delta^{Nk_\delta}} e^{ \kappa  \mu}\left(\norm{ u}{H^1_x(\C)}{}+\norm{[\Box,  \chi] u}{L^2(\Omega_\delta)}{}+\norm{\Box u}{L^2(\Omega_\delta)}{}\right)+\frac{C}{\delta^{Nk_\delta} \mu } \norm{u}{H^1}{} \\
 &\leq  \frac{C}{\delta^{Nk_\delta}} e^{ \kappa  \mu}\left(\norm{ u}{H^1(\C)}{}+\norm{\Box u}{L^2(\Omega_\delta)}{}\right)+\frac{C}{\delta^{Nk_\delta} \mu } \norm{u}{H^1}{}.
\end{align}
Above we used the fact that for $\g=\frac{\rtt^2}{8}$ one has $\{\phi > \g\} \subset \{r \leq \rtt \}$ and that by standard energy estimates/quantitative finite speed of propagation for the wave equation (see for instance~\cite[Chapter 1.2]{lerner2019carleman}) one has $\norm{u}{H^1_x(\{\phi>0\}\cap \{r \leq \rtt\})}{} \leq \norm{ u}{H^1(\C)}{}+\norm{\Box u}{L^2(\{\phi>0\})}{}$. Notice that by approximation the estimate extends to $u \in H^1(\{\phi>0\})$ such that $\Box u \in L^2$ in $\{\phi>0\}$ (see~\cite[Lemma B.16]{lerner2019carleman}).  Now for the second term we have the loose estimate 
\begin{align}
    \norm{(1-\chi) u}{L^2(\Omega_{3 \delta})}{} \leq \norm{u}{L^2(\C)}{},
\end{align}
which combined with the above and recalling the explicit constants of Proposition~\ref{prop_key} yields the existence of $C,N >0$ such that for $\kappa= \delta^N$ and $\mu_0=C^{k_\delta}$ one has for all $\mu \geq \mu_0$
\begin{align}
  \norm{ u}{L^2(\Omega_{3\delta})}{}  \leq  \frac{C}{\delta^{Nk_\delta}} e^{ \kappa  \mu}\left(\norm{ u}{H^1(\C)}{}+\norm{\Box u}{L^2}{}\right)+\frac{C}{\delta^{Nk_\delta} \mu } \norm{u}{H^1}{},
\end{align}
where we recall that $k_\delta=\frac{1}{\delta^2}$.

We can now apply the optimization Lemma~\ref{lem_optimization} with $a=\norm{ u}{L^2(\D_{3\delta})}{}$, $b=\frac{C}{\delta^{Nk_\delta}}(\norm{ u}{H^1(\C)}{}+\norm{\Box u}{L^2}{})$, $c= \frac{C}{\delta^{Nk_\delta}} \norm{u}{H^1}{}$, $C_1= \kappa=\delta^N$, $C_2=1$ and $D_1=2C_1 \mu_0$ to find 
\begin{align}
\norm{ u}{L^2(\Omega_{3\delta})}{} \leq \frac{D_1}{\delta^{Nk_\delta} } \frac{C \norm{u}{H^1}{}}{\log\left(1+\frac{\norm{u}{H^1}{}}{\norm{u}{H^1(\C)}{}+\norm{\Box u}{L^2}{}}\right)},
\end{align}
which implies for some different constants $C,N >0 $
\begin{align}
\label{estim_in_cone}
\norm{ u}{L^2(\Omega_{3\delta})}{} \leq \frac{C}{\delta^{Nk_\delta} } \frac{\norm{u}{H^1}{}}{\log\left(1+\frac{\norm{u}{H^1}{}}{\norm{u}{H^1(\C)}{}+\norm{\Box u}{L^2}{}}\right)}.
\end{align}

Now to get the estimate restricted in the diamond $\D$ let $u \in H^1(\D)$ such that $\Box u = f \in L^2$ in $\D$. Consider then $\tilde{u} \in H^1(\{\phi >0\})$ satisfying $\Box \tilde{u}=f$ and $\tilde{u}_{|t=0}=u_{|t=0}$, $\partial_t \tilde{u}_{|t=0}=\partial_t u_{|t=0}$. Then by finite speed of propagation we have that $\tilde{u}=u$ on $\D$. The desired result follows by applying~\eqref{estim_in_cone} to $\tilde{u}$, replacing $\delta$ by $\delta^2/3$, renaming $N$ and changing notation from $\rtt$ to $R$.
\end{proof}

\begin{remark}
    \label{rem_dependance_q}
    All the obtained estimates remain valid when one replaces $\Box$ by $\Box+q$, with $q$ a time independent potential. Under the additional assumption that $\norm{q}{L^\infty}{}\leq M$ the constants remain uniform with respect to $M$. This a consequence of Remark~\ref{rem_potential}.
\end{remark}

\bigskip

As already discussed, Theorem~\ref{thm_log_close_tocone} allows to quantify uniqueness $\delta$ close to the diamond $\D$ with an explicit dependence with respect to the parameter $\delta$. By estimating the remaining part $\D \backslash \D_\delta$ we can in fact obtain a stability estimate up to \textit{the whole diamond} $\D$.

\begin{proof}[Proof of Theorem~\ref{thm_up_to_cone}]
    We need to get a bound for the term $\norm{u}{L^2(\D \backslash \D_\delta)}{}$. We then notice that the Lebesgue measure of the set $\D \backslash \D_\delta$ satisfies $|\D \backslash \D_\delta| \leq C \delta^2$ for some $C>0$ depending only on $r_0,n$. We can then use Hölder's inequality as well as the Sobolev embedding $H^1 \hookrightarrow L^{\frac{2n}{n-2}}$ to get
    \begin{align}
        \norm{u}{L^2(\D \backslash \D_\delta)}{2} \leq \norm{u^2}{L^{\frac{n}{n-2}}}{}\norm{\mathds{1}_{\D \backslash \D_\delta}}{L^{3/2}}{}\leq C \delta^{\frac{8}{3}} \norm{u}{L^\frac{2n}{n-2}}{2} \leq C^\prime \delta^{\frac{8}{3}} \norm{u}{H^1}{2}.
    \end{align}

Using Theorem~\ref{thm_log_close_tocone} we obtain then
\begin{align}
    \norm{u}{L^2(\D)}{} &\leq \norm{u}{L^2(\D_\delta)}{}+\norm{u}{L^2(\D \backslash \D_\delta)}{} \leq Ce^{1/\delta^5} \frac{\norm{u}{H^1}{}}{\log\left(1+\frac{\norm{u}{H^1}{}}{\norm{u}{H^1(\C)}{}+\norm{f}{L^2}{}}\right)}+C \delta^{\frac{4}{3}}\norm{u}{H^1}{}.
\end{align}
We can now use again the optimization lemma~\ref{lem_optimization} with $\mu=\frac{1}{\delta^5}$ and $\alpha=\frac{4}{15}$ to find that there exists a constant $C>0$ depending only on $r_0,n$ such that
\begin{align}
    \norm{u}{L^2(\D)}{}  \leq \frac{C \norm{u}{H^1}{}}{\left(\log\left( \frac{\norm{u}{H^1}{}}{b}+1\right)\right)^{4/15}},
\end{align}
where $1/b=\frac{1}{\norm{u}{H^1}{}}\log\left(1+\frac{\norm{u}{H^1}{}}{\norm{u}{H^1(\C)}{}+\norm{f}{L^2}{}}\right)$. This proves the theorem.
\end{proof}

	\appendix
	
	\section{Some lemmata}
    \label{sec_append}
	
	The following is the conclusive lemma for the qualitative unique continuation with the Gaussian weight. See~\cite[Proposition~2.1]{Hor:97}.
	\begin{lem}
		\label{lemme d analyse harmonique}
		Let $u \in L^2(\R^n)$ and let $\phi$ be a smooth real valued function. Let $(A_\tau)_{\tau >0}$ be a family of continuous bounded functions in $\R^n$, such that for any compact set $K\subset \R^n$, we have  $\| A_\tau -1\|_{L^\infty(K)} \rightarrow_{\tau \rightarrow \infty} 0$. If
		$$
		\norm{A_\tau (D)e^{\tau\phi} u}{L^2}{}\leq C, \quad \tau \geq \tau_0 , 
		$$
		then $\supp {u} \subset \{\phi \leq 0\}$.
	\end{lem}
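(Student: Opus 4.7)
The plan is to argue by contradiction. Suppose there exists $x_0 \in \supp u$ with $\phi(x_0) > 0$. By continuity of $\phi$, choose $\alpha > 0$ and a bounded open neighborhood $V$ of $x_0$ on which $\phi \geq 2\alpha$; pick $\chi \in C_0^\infty(V)$ with $\chi \equiv 1$ near $x_0$, so that $\chi u \in L^2$ is not identically zero. I will derive the contradiction $\chi u = 0$ by testing against Schwartz functions. For every $v \in \mathcal{S}(\R^n)$ and every $\tau > 0$, the identity
$$\langle \chi u, v\rangle = \langle u, \chi v\rangle = \langle e^{\tau\phi} u,\; e^{-\tau\phi}\chi v\rangle$$
holds. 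Setting $h_\tau = e^{\tau\phi} u$ and $g_\tau = e^{-\tau\phi}\chi v \in C_0^\infty(V)$, the Leibniz rule together with $\phi \geq 2\alpha$ on $\supp \chi$ gives the rapid decay $\|g_\tau\|_{H^N} \leq C_N \tau^N e^{-2\tau\alpha}$ for every integer $N$. Since the left-hand side is independent of $\tau$, it suffices to show the right-hand side is $o(1)$ as $\tau\to\infty$.

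Next, I would appeal to Plancherel and insert a frequency cutoff at radius $R=R(\tau)$, to be chosen:
$$\langle h_\tau, g_\tau\rangle = \int_{|\xi| \leq R(\tau)} \widehat{h}_\tau\, \overline{\widehat{g}_\tau}\, d\xi + \int_{|\xi| > R(\tau)} \widehat{h}_\tau\, \overline{\widehat{g}_\tau}\, d\xi.$$
The local uniform convergence $A_\tau \to 1$ produces, by a diagonal argument, a sequence $R(\tau) \to \infty$ with $|A_\tau| \geq 1/2$ on $B(0, R(\tau))$ for large $\tau$. On the low-frequency piece I divide by $A_\tau$ inside the integral and apply Cauchy-Schwarz with the hypothesis $\|A_\tau(D) h_\tau\|_{L^2} \leq C$:
$$\Big|\int_{|\xi|\leq R(\tau)} \widehat{h}_\tau\, \overline{\widehat{g}_\tau}\, d\xi\Big| \leq 2\,\|A_\tau(D) h_\tau\|_{L^2}\, \|\widehat{g}_\tau\|_{L^2} \leq 2C\, e^{-2\tau\alpha}\,\|\chi v\|_{L^2} \longrightarrow 0.$$
For the high-frequency tail, I would first reduce to the case where $u$ has compact support (harmless since the conclusion concerns only $\supp u$): then $h_\tau$ is $L^2$ compactly supported with $\|h_\tau\|_{L^2} \leq e^{\tau M}\|u\|_{L^2}$, where $M = \sup_{\supp u}\phi$, and combined with $\|\widehat{g}_\tau\|_{L^2(|\xi|>R)} \leq C_N\, \tau^N e^{-2\tau\alpha}/R^N$ this makes the tail $o(1)$ provided $R(\tau)$ grows sufficiently fast.

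The main obstacle is the simultaneous choice of $R(\tau)$: it must grow slowly enough to remain within the compact sets on which $A_\tau \to 1$, yet fast enough that the high-frequency tail absorbs the exponential factor $e^{\tau M}$ on $\widehat{h}_\tau$. The key is to choose $x_0$ close to the supremum of $\phi$ on $\supp u$ and shrink $V$ around it, so that $M$ can be made arbitrarily close to $2\alpha$; the ratio of high- to low-frequency contributions becomes tame, and a diagonal $R(\tau)\to\infty$ indeed makes both pieces vanish. Once both tails go to zero, $\langle \chi u, v\rangle = 0$ for every $v \in \mathcal{S}(\R^n)$, forcing $\chi u = 0$ and contradicting the nonvanishing of $\chi u$.
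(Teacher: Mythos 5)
Your overall strategy---argue by contradiction, test $\chi u$ against $v\in\mathcal{S}$, rewrite $\langle\chi u,v\rangle=\langle e^{\tau\phi}u,\,e^{-\tau\phi}\chi v\rangle$, and split in frequency at radius $R(\tau)$---is a reasonable first instinct, but the split does not close, and the problem is quantitative, not cosmetic. The low-frequency estimate is valid only on the ball where $|A_\tau|\geq 1/2$; the hypothesis provides no rate for the locally uniform convergence $A_\tau\to 1$, so the admissible $R(\tau)$ from your diagonal argument may grow arbitrarily slowly. In the very case this paper uses the lemma, $A_\tau(\xi)=e^{-\ve\xi^2/2\tau}$, the constraint $|A_\tau(\xi)|\geq 1/2$ forces $R(\tau)\lesssim\sqrt{\tau}$, and even a refinement that tolerates smaller $|A_\tau|$ at the cost of the factor $1/\inf_{B(0,R)}|A_\tau|=e^{\ve R^2/2\tau}$ only pushes the admissible radius up to $R(\tau)\lesssim\tau$. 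On the other side, your high-frequency bound is $e^{\tau M}\|u\|_{L^2}\,C_N\tau^N e^{-2\tau\alpha}/R^N=C_N\|u\|_{L^2}\,e^{\tau(M-2\alpha)}(\tau/R)^N$, which tends to zero only if $R(\tau)\gtrsim\tau\,e^{\tau(M-2\alpha)/N}$---exponential in $\tau$. Shrinking $V$ so that $2\alpha$ approaches $M$ does not rescue this: since $V$ is a nonempty open set and $\alpha$ is chosen so that $\phi\geq 2\alpha$ throughout $V$, one always has $M-2\alpha>0$ strictly, and any strictly positive gap makes $e^{\tau(M-2\alpha)}$ eventually dominate every admissible $(R(\tau)/\tau)^N$ (the constants $C_N$ are in general unbounded in $N$, since $\phi$ is only smooth, so sending $N\to\infty$ with $\tau$ does not help either). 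The two constraints on $R(\tau)$ are therefore incompatible, and the argument breaks precisely for the Gaussian weight that the paper needs.

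A secondary issue: the ``harmless'' reduction to compactly supported $u$ is not justified. The hypothesis is a bound on $\|A_\tau(D)e^{\tau\phi}u\|_{L^2}$, and $A_\tau(D)$ is a nonlocal Fourier multiplier that does not commute with spatial cutoffs; multiplying $u$ by $\chi_0\in C_0^\infty$ does not yield a bound on $\|A_\tau(D)e^{\tau\phi}\chi_0 u\|_{L^2}$ without a commutator estimate, and no such estimate is available for $A_\tau$ that are merely bounded and continuous. The paper does not prove this lemma but refers to Proposition~2.1 of~\cite{Hor:97}; the argument there does not proceed by the naive low/high frequency split you propose, precisely because of the obstruction above.
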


	\begin{lem}[See Lemma 3.20 in~\cite{LL:23notes}]
		\label{lem_almostlocal}
		There exist $C,c>0$ such that for $\chi_1, \chi_2 \in C^\infty(\R^{n+1})$ with all derivatives bounded and satisfying $\dist(\supp(\chi_1),\supp(\chi_2))=d>0$ one has for all $u \in \mathscr{S}(\R^{n+1})$ and $\lambda > 0$
		$$
		\norm{\chi_1 e^{-\frac{D^2_t}{\lambda}} (\chi_2u)}{H^k}{} \leq C \norm{\chi_1}{W^{k,\infty}}{} \norm{\chi_2}{W^{k,\infty}}{}e^{-c d^2 \lambda} \norm{u}{H^k}{}.
		$$
	\end{lem}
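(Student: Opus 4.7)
\textbf{Proof plan for Lemma~\ref{lem_almostlocal}.}

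The plan is to use the explicit convolution representation of the Gaussian smoothing in the time variable together with the support separation of $\chi_1, \chi_2$, and then extend from $L^2$ to $H^k$ by a Leibniz expansion exploiting the fact that $e^{-D_t^2/\lambda}$ is a constant-coefficient Fourier multiplier. Recall the convolution formula stated in Section~\ref{sec_local}:
\begin{align*}
  e^{-\frac{D_t^2}{\lambda}} v(t,x) = \left(\frac{\lambda}{4\pi}\right)^{1/2}\!\int_\R v(s,x)\, e^{-\frac{\lambda}{4}(t-s)^2}\, ds,
\end{align*}
so that the operator acts as convolution in $t$ with the kernel $K_\lambda(r)=\bigl(\tfrac{\lambda}{4\pi}\bigr)^{1/2} e^{-\lambda r^2/4}$, which satisfies $\|K_\lambda\|_{L^1(\R)}=1$ for every $\lambda>0$.

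First, I would prove the case $k=0$. Since $\supp(\chi_1)$ and $\supp(\chi_2)$ are subsets of $\R^{1+n}$ with distance $d>0$, any $(t,x)\in\supp\chi_1$ and $(s,x)\in\supp\chi_2$ sharing the same spatial variable $x$ satisfy $|t-s|=\dist((t,x),(s,x))\geq d$. Therefore, whenever $\chi_1(t,x)\chi_2(s,x)\neq 0$, one has the splitting
\begin{align*}
  e^{-\lambda(t-s)^2/4} \leq e^{-\lambda d^2/8}\, e^{-\lambda(t-s)^2/8}.
\end{align*}
Plugging this into the convolution, one obtains the pointwise bound
\begin{align*}
  |\chi_1(t,x)\,e^{-D_t^2/\lambda}(\chi_2 u)(t,x)|
  \leq \|\chi_1\|_{L^\infty}\|\chi_2\|_{L^\infty}\, e^{-\lambda d^2/8}\, \bigl(\tilde K_\lambda *_t |u(\cdot,x)|\bigr)(t),
\end{align*}
where $\tilde K_\lambda(r)=\bigl(\tfrac{\lambda}{4\pi}\bigr)^{1/2} e^{-\lambda r^2/8}$ has an $L^1(\R)$ norm bounded by $\sqrt{2}$ independently of $\lambda$. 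Young's inequality in $t$ and Fubini in $x$ then yield the desired $L^2$ estimate with $c=1/8$.

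For general $k$, the key observation is that $e^{-D_t^2/\lambda}$ is a constant-coefficient Fourier multiplier, so it commutes with every partial derivative $\partial^\alpha=\partial_t^{\alpha_0}\partial_x^{\alpha'}$. Therefore, by Leibniz's rule applied twice (once to pass derivatives across $\chi_1$ from the outside, and once to distribute derivatives among $\chi_2$ and $u$ on the inside), for any multi-index $\alpha$ with $|\alpha|\leq k$ one has
\begin{align*}
  \partial^\alpha\bigl[\chi_1\, e^{-D_t^2/\lambda}(\chi_2 u)\bigr]
  = \sum_{\beta+\gamma+\delta=\alpha} c_{\beta,\gamma,\delta}\, (\partial^\beta\chi_1)\cdot e^{-D_t^2/\lambda}\bigl[(\partial^\gamma\chi_2)(\partial^\delta u)\bigr].
\end{align*}
Since $\supp(\partial^\beta\chi_1)\subset\supp(\chi_1)$ and $\supp(\partial^\gamma\chi_2)\subset\supp(\chi_2)$, the support separation is preserved and one can apply the $L^2$ estimate above to each term, producing the factor $\|\partial^\beta\chi_1\|_{L^\infty}\|\partial^\gamma\chi_2\|_{L^\infty} e^{-cd^2\lambda}\|\partial^\delta u\|_{L^2}$. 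Summing over $|\alpha|\leq k$ and using $|\delta|\leq k$ to bound $\|\partial^\delta u\|_{L^2}$ by $\|u\|_{H^k}$ gives the stated inequality, after absorbing the combinatorial constants into $C$.

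This proof is essentially bookkeeping and presents no genuine obstacle; the only point where one must be careful is the spatial-variable matching in the support separation (the fact that $(t,x)$ and $(s,x)$ share the same $x$ is exactly what turns the $\R^{1+n}$ distance into the purely temporal distance $|t-s|$), which is why the Gaussian smoothing in $t$ alone suffices to produce the exponential decay $e^{-cd^2\lambda}$.
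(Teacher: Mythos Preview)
Your argument is correct and is the standard one: the paper itself does not prove this lemma but simply refers to \cite{LL:23notes}, and the proof there proceeds exactly as you outline, via the explicit Gaussian convolution kernel in $t$, the support separation forcing $|t-s|\geq d$, the splitting $e^{-\lambda(t-s)^2/4}\leq e^{-\lambda d^2/8}e^{-\lambda(t-s)^2/8}$, Young's inequality, and then the Leibniz expansion for higher $k$.
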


\bigskip
	
	We now recall some lemmas from \cite{Laurent_2018} that we use in the article. We state them in a more precise way concerning the dependency of the constants. The proofs follow immediately from the ones in~\cite{Laurent_2018}.

	\begin{lem}[See Lemma 2.3 in \cite{Laurent_2018}]
		\label{lemma 2.3 from ll}
		There exist $C,c >0$ such that for any $f_1, f_2 \in L^{\infty}(\R^{n+1})$ with $\textnormal{dist}(\supp{f_1},\supp{f_2})\geq d>0$ and all $\lambda\geq 0$, we have
		$$
		\norm{f_{1,\lambda}f_2}{L^{\infty}}{}\leq C e^{-c d^2 \lambda}\norm{f_1}{L^\infty}{} \norm{f_2}{L^\infty}{}, \quad \norm{f_{1, \lambda}f_{2,\lambda}}{L^\infty}{} \leq C e^{-c d^2 \lambda}\norm{f_1}{L^\infty}{} \norm{f_2}{L^\infty}{}.
		$$

	\end{lem}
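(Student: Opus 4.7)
\textbf{Proof plan for Lemma \ref{lemma 2.3 from ll}.} The plan is to exploit the explicit Gaussian convolution representation
$$f_{j,\lambda}(t,x) = \Bigl(\frac{\lambda}{4\pi}\Bigr)^{1/2} \int_\R f_j(s,x)\, e^{-\frac{\lambda}{4}|t-s|^2}\,ds$$
and use that the hypothesis on the supports forces $|t-s|\geq d$ wherever both endpoints of the convolution contribute. For the first estimate, I would fix $(t,x)\in \supp f_2$ and observe that the integrand defining $f_{1,\lambda}(t,x)$ is supported at those $s\in\R$ for which $(s,x)\in \supp f_1$; since $(t,x)\in\supp f_2$, the distance hypothesis forces $|t-s|\geq \dist(\supp f_1,\supp f_2)\geq d$.

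The remaining step is a Gaussian tail estimate: the change of variables $v=\tfrac{\sqrt\lambda}{2}(s-t)$ reduces
$$\Bigl(\frac{\lambda}{4\pi}\Bigr)^{1/2}\int_{|t-s|\geq d} e^{-\frac{\lambda}{4}|t-s|^2}\,ds = \frac{2}{\sqrt\pi}\int_{d\sqrt\lambda/2}^{\infty} e^{-v^2}\,dv \leq C e^{-c d^2 \lambda},$$
using the standard bound $\operatorname{erfc}(x)\leq C e^{-x^2}$. Combining this with $|f_1(s,x)|\leq \|f_1\|_{L^\infty}$ and then multiplying by $|f_2(t,x)|\leq \|f_2\|_{L^\infty}$ at the fixed point $(t,x)$ yields the first bound. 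Outside $\supp f_2$ the product $f_{1,\lambda}f_2$ vanishes, so the pointwise bound becomes an $L^\infty$ bound.

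For the second inequality I would use a dichotomy argument at each fixed point $(t,x)\in\R^{1+n}$. Let $S_j(x)=\{s\in\R:(s,x)\in\supp f_j\}$. If one of the $S_j(x)$ is empty then $f_{j,\lambda}(\cdot,x)\equiv 0$ and the bound is trivial. Otherwise, picking $s_j\in S_j(x)$, the support hypothesis gives $|s_1-s_2|\geq d$, hence $\max(|t-s_1|,|t-s_2|)\geq d/2$, so $\dist(t,S_j(x))\geq d/2$ for some $j\in\{1,2\}$. For that $j$ the argument of the previous paragraph (applied with $d$ replaced by $d/2$) yields $|f_{j,\lambda}(t,x)|\leq C e^{-cd^2\lambda}\|f_j\|_{L^\infty}$, and for the other factor I would use the trivial bound $\|f_{i,\lambda}\|_{L^\infty}\leq \|f_i\|_{L^\infty}$, which holds because the Gaussian kernel integrates to $1$.

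The only real subtlety is the second part: the argument is pointwise and the favorable factor depends on $(t,x)$, but this is harmless since both cases produce the same bound and the estimate is then taken in $L^\infty$. No uniformity issue arises because the constants $C,c$ coming from the $\operatorname{erfc}$ estimate are absolute.
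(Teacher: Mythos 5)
Your proof is correct. The paper does not actually supply a proof of this lemma; it only refers to Lemma~2.3 of~\cite{Laurent_2018} and remarks that the proofs carry over directly, so there is no paper proof to compare against in detail. Your argument — reducing each inequality to a one-dimensional Gaussian tail estimate via the explicit convolution representation, noting that the support hypothesis forces the integration variable to stay at distance $\geq d$ (resp.\ $\geq d/2$ after the dichotomy) from the evaluation point, and bounding $\int_{a}^{\infty} e^{-v^2}\,dv \leq C e^{-a^2}$ — is the natural and standard route, and matches what one expects the cited proof to do. Two small points worth tightening if you write this up: in the dichotomy for the second inequality the clean statement is that if $\dist(t,S_1(x)) < d/2$ then, by picking $s_1\in S_1(x)$ within $d/2$ of $t$, the triangle inequality and $|s_1-s_2|\geq d$ force $|t-s_2| > d/2$ for every $s_2\in S_2(x)$, hence $\dist(t,S_2(x))\geq d/2$ (your phrasing with a single choice of $s_1,s_2$ is a shorthand for this); and the identification of $f_j(s,x)$ vanishing off the slice $S_j(x)$ holds for a.e.\ $x$ by Fubini, which is all that is needed for an $L^\infty$ bound.
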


	\begin{lem}[See Lemma 2.4 in \cite{Laurent_2018}]
		\label{lemma 2.4 from ll}
		For every $k \in \mathbb{N}$, there exist $C,c >0$ such that for $f_2 \in C^{\infty}(\R^{n+1})$ with all derivatives bounded, $d>0$, all $f_1 \in H^k(\R^{n+1})$ such that $\textnormal{dist}(\supp{f_1},\supp{f_2})\geq d$ and all $\lambda \geq 0$ we have 
		$$
		\norm{f_{1, \lambda}f_2}{H^k}{} \leq C \norm{f_2}{W^{k,\infty}}{} e^{- c d^2\lambda}\norm{f_1}{H^k}{}.
		$$

	\end{lem}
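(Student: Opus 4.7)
The plan is to reduce everything to Young's inequality for convolutions in the time variable, combined with a Gaussian tail estimate that exploits the support separation. First I would write, by Leibniz's rule,
\[
\partial^\alpha(f_{1,\lambda} f_2) = \sum_{\beta \leq \alpha} \binom{\alpha}{\beta} \partial^{\alpha-\beta} f_{1,\lambda} \cdot \partial^\beta f_2,
\]
for any multi-index $\alpha$ with $|\alpha| \leq k$. Since $\partial^\beta f_2$ is uniformly bounded by $\norm{f_2}{W^{k,\infty}}{}$ and $\supp(\partial^\beta f_2) \subset \supp(f_2)$, it is enough to prove the pointwise product bound
\[
\norm{\chi \, \partial^\delta f_{1,\lambda}}{L^2(\R^{n+1})}{} \leq C e^{-c d^2 \lambda} \norm{f_1}{H^{|\delta|}}{},
\]
for any indicator $\chi = \mathds{1}_{\supp f_2}$ and any multi-index $\delta$ with $|\delta| \leq k$.

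Next I would commute derivatives with the regularization. Since convolution is in $t$ only, spatial derivatives pass directly: $\partial_x^\gamma f_{1,\lambda} = (\partial_x^\gamma f_1)_\lambda$. Time derivatives can also be moved onto $f_1$ by an integration by parts on the Gaussian kernel, which is Schwartz in $t$, giving $\partial_t^j (G_\lambda \ast_t f_1) = G_\lambda \ast_t \partial_t^j f_1$. Thus, writing $g = \partial^\delta f_1 \in L^2$ with $\supp g \subset \supp f_1$, we have $\partial^\delta f_{1,\lambda} = G_\lambda \ast_t g$ with $G_\lambda(r) = (\lambda/4\pi)^{1/2} e^{-\lambda r^2/4}$.

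Now I would exploit the support separation. For any $(t,x) \in \supp f_2$, the integrand $G_\lambda(t-s) g(s,x)$ is supported in $s$ where $(s,x) \in \supp f_1$, and by the hypothesis $\dist(\supp f_1,\supp f_2) \geq d$ this forces $|t-s| \geq d$. Hence on $\supp f_2$,
\[
\partial^\delta f_{1,\lambda}(t,x) = \int_{|t-s|\geq d} G_\lambda(t-s) g(s,x)\, ds = (\tilde G_\lambda \ast_t g)(t,x), \qquad \tilde G_\lambda := G_\lambda \mathds{1}_{\{|r|\geq d\}}.
\]
A direct change of variables gives the Gaussian tail
\[
\norm{\tilde G_\lambda}{L^1(\R_t)}{} = 2 (\lambda/4\pi)^{1/2} \int_{d}^{\infty} e^{-\lambda r^2/4} dr \leq C e^{-c d^2 \lambda}.
\]
Young's inequality in the $t$-variable, followed by integration in $x$, then yields
\[
\norm{\chi \, \partial^\delta f_{1,\lambda}}{L^2}{} \leq \norm{\tilde G_\lambda \ast_t g}{L^2}{} \leq \norm{\tilde G_\lambda}{L^1(\R_t)}{} \norm{g}{L^2}{} \leq C e^{-c d^2 \lambda} \norm{f_1}{H^{|\delta|}}{}.
\]
Summing the Leibniz expansion over $|\alpha|\leq k$ produces the claimed bound with constant $C$ depending on $k$ and the dimension only.

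The only subtle point, and hence the one I would check carefully, is the interchange of derivatives and convolution when $f_1$ is merely $H^k$: this is justified by approximating $f_1$ by Schwartz functions, performing the integration by parts there, and passing to the limit using the continuity of $G_\lambda \ast_t$ on $L^2$. Everything else is a routine application of Young's inequality and the standard Gaussian tail estimate.
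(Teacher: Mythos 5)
Your proof is correct. The paper does not reproduce an argument for this lemma (it cites Lemma~2.4 of Laurent--L\'eautaud and notes the proof carries over), but your route --- Leibniz expansion, reduction to the support of $f_2$, commuting $\partial^\delta$ through the one-variable Gaussian convolution, and then Young's inequality in $t$ together with the Gaussian tail bound $\norm{G_\lambda \mathds{1}_{\{|r|\geq d\}}}{L^1(\R)}{}\leq e^{-d^2\lambda/4}$ --- is exactly the standard argument underlying that reference, and each step you give (in particular the fact that $\dist(\supp f_1,\supp f_2)\geq d$ forces $|t-s|\geq d$ once the $x$-coordinate is frozen) checks out.
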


	\begin{lem}[See Lemma 2.6 in \cite{Laurent_2018}]
		\label{lemma 2.6 from ll}
		There exist $C,c>0$ such that for $f_1, f_2 \in C^{\infty}_0(\R^{n+1})$ with $f_1=1$ in a neighborhood of $\supp{f_2}$ we have for all $\lambda>0$, and all $u \in H^1(\R^{n+1})$,  
		$$
		\norm{f_{2,\lambda}\partial^\alpha u}{L^2}{} \leq C \norm{f_1}{W^{1,\infty}}{} \norm{f_2}{W^{1,\infty}}{} \left(\norm{f_{1, \lambda}u}{H^1}{}+e^{-c d^2 \lambda}\norm{u}{H^1}{}\right), \quad \textnormal{for all} \: |\alpha|\leq 1,
		$$
		where $d=\textnormal{dist}(\supp{f_2},\supp{(1-f_1}))$.
		
	\end{lem}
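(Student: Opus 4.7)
The plan is to insert $1 = f_{1,\lambda} + (1 - f_{1,\lambda})$ in front of $\partial^\alpha u$ and handle each resulting piece separately. The decomposition
$$f_{2,\lambda} \partial^\alpha u = f_{2,\lambda} f_{1,\lambda} \partial^\alpha u + f_{2,\lambda}(1 - f_{1,\lambda}) \partial^\alpha u$$
is designed so that the first piece delivers the $\norm{f_{1,\lambda} u}{H^1}{}$ contribution on the right-hand side, while the second piece supplies the exponential remainder. The key algebraic identity that makes the second piece tractable is $1 - f_{1,\lambda} = (1 - f_1)_\lambda$, which holds because $e^{-D_t^2/\lambda}$ preserves the constant function $1$ (its integral kernel is a probability density).

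Given this identity, the second piece is bounded immediately by Lemma~\ref{lemma 2.3 from ll} applied to the pair $(f_2, 1 - f_1)$, whose supports are separated by exactly $d$: this yields $\norm{f_{2,\lambda}(1 - f_1)_\lambda}{L^\infty}{} \leq Ce^{-cd^2\lambda} \norm{f_2}{L^\infty}{} \norm{1 - f_1}{L^\infty}{}$, and hence $\norm{f_{2,\lambda}(1 - f_{1,\lambda}) \partial^\alpha u}{L^2}{} \lesssim e^{-cd^2\lambda} \norm{u}{H^1}{}$. For the main piece, I would use $\norm{f_{2,\lambda}}{L^\infty}{} \leq \norm{f_2}{L^\infty}{}$ (since $e^{-D_t^2/\lambda}$ is a probability-measure convolution) and then commute the derivative through $f_{1,\lambda}$ via the Leibniz rule $f_{1,\lambda}\partial^\alpha u = \partial^\alpha(f_{1,\lambda} u) - (\partial^\alpha f_{1,\lambda}) u$; here one uses that $\partial^\alpha f_{1,\lambda} = (\partial^\alpha f_1)_\lambda$ for $|\alpha| \leq 1$, since $e^{-D_t^2/\lambda}$ commutes with every first-order derivative (spatial or temporal). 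The contribution of $\partial^\alpha(f_{1,\lambda} u)$ is then immediately controlled by $\norm{f_{1,\lambda} u}{H^1}{}$, producing the leading term on the right-hand side.

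The main obstacle is the leftover term $f_{2,\lambda}(\partial^\alpha f_1)_\lambda u$, since a crude bound here only produces a $\norm{u}{L^2}{}$ factor without any exponential smallness, which would not match the desired inequality. The resolution exploits the hypothesis that $f_1 = 1$ on a neighborhood of $\supp f_2$: differentiating, $\partial^\alpha f_1$ vanishes on that same neighborhood, so $\supp(\partial^\alpha f_1)$ is at distance at least $d$ from $\supp f_2$. A second application of Lemma~\ref{lemma 2.3 from ll}, now to the pair $(f_2, \partial^\alpha f_1)$, gives $\norm{f_{2,\lambda}(\partial^\alpha f_1)_\lambda}{L^\infty}{} \leq Ce^{-cd^2\lambda} \norm{f_2}{L^\infty}{} \norm{f_1}{W^{1,\infty}}{}$, which reinstates the exponential factor and closes the estimate. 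Adding the three contributions and using $\norm{1 - f_1}{L^\infty}{} \lesssim \norm{f_1}{W^{1,\infty}}{}$ and $\norm{f_2}{L^\infty}{} \leq \norm{f_2}{W^{1,\infty}}{}$ to unify the constants yields the claimed inequality.
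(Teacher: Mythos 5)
Your proof is correct and is essentially the argument behind Laurent--L\'eautaud's Lemma~2.6, which the paper cites without reproving. All the individual steps hold: the identity $1-f_{1,\lambda}=(1-f_1)_\lambda$ follows from the Gaussian kernel being a probability density in $t$; $e^{-D_t^2/\lambda}$ commutes with first-order derivatives, so $\partial^\alpha f_{1,\lambda}=(\partial^\alpha f_1)_\lambda$; and the key observation that $\supp(\partial^\alpha f_1)\subset\supp(1-f_1)$ (since all derivatives of $f_1$ vanish on $\operatorname{int}\{f_1=1\}$) lets you apply Lemma~\ref{lemma 2.3 from ll} a second time to kill the leftover commutator with an exponential factor. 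Two small points worth making explicit if you wrote this up: the unification $\norm{1-f_1}{L^\infty}{}\lesssim\norm{f_1}{W^{1,\infty}}{}$ uses that $\norm{f_1}{L^\infty}{}\geq 1$ whenever $f_2\not\equiv 0$ (because $f_1\equiv 1$ near $\supp f_2$), so the implicit constant is indeed universal; and for $\alpha=0$ the Leibniz step is vacuous and only the exponential remainder from the $(1-f_{1,\lambda})$ piece is needed, which your decomposition handles automatically.
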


	\begin{lem}[See Lemma 2.10 in \cite{Laurent_2018}]
		\label{lemma 2.10 from ll}
		For any $k \in \mathbb{N}$, there exist $C,c >0$ such that for $f_1$, $f_2$ in $C^\infty$ bounded as well as their derivatives with $\textnormal{dist}(\supp{f_1},\supp{f_2})\geq d>0$ for all $\mu>0$ and $\lambda>0$, we have
		$$
		\norm{f_{1,\lambda}M^\mu_\lambda f_{2,\lambda}}{H^k\rightarrow H^k}{}\leq C \norm{f_1}{W^{k,\infty}}{}\norm{f_2}{W^{k,\infty}}{} \left(e^{-c d^2 \frac{\mu^2}{\lambda}}+e^{-c d^2 \lambda}\right).
		$$
	\end{lem}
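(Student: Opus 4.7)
My plan is to first reduce the $H^k \to H^k$ bound to the case $k=0$, the $L^2 \to L^2$ bound. Since $M^\mu_\lambda$ is a Fourier multiplier acting only in time, it commutes with every partial derivative. Hence, applying $\partial^\alpha$ with $|\alpha| \leq k$ to $(f_{1,\lambda} M^\mu_\lambda f_{2,\lambda}) u$ and using the Leibniz rule produces a finite sum of operators of the same structural form $g_{1,\lambda} M^\mu_\lambda g_{2,\lambda}$ applied to $\partial^\beta u$, $|\beta| \leq k$. Here each $g_j$ is a derivative of $f_j$, hence bounded by $\|f_j\|_{W^{k,\infty}}$ and supported in $\mathrm{supp}(f_j)$, using that $\partial^\alpha f_{j,\lambda} = (\partial^\alpha f_j)_\lambda$. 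Therefore it suffices to establish the estimate for the $L^2 \to L^2$ norm.

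The operator acts only in $t$ (pointwise $x$-dependent multiplications and an $x$-independent multiplier in $t$), so it suffices to bound the $L^2_t \to L^2_t$ norm uniformly in $x$. At fixed $x$, the convolution kernel in time is
\[
K(t,s;x) = f_{1,\lambda}(t,x)\, K_{M^\mu_\lambda}(t-s)\, f_{2,\lambda}(s,x),
\]
where $K_{M^\mu_\lambda}$ denotes the convolution kernel of $M^\mu_\lambda$. I would apply Schur's test, so the task reduces to bounding $\sup_{t,x}\int|K(t,s;x)|\,ds$ and the symmetric quantity $\sup_{s,x}\int|K(t,s;x)|\,dt$ by the desired quantity.

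Two ingredients drive the estimate. First, the kernel admits the representation $K_{M^\mu_\lambda}(r) = \frac{\mu}{2\pi}\int e^{ir\mu\eta} m_\lambda(\eta)\,d\eta$. Since $m_\lambda$ is the convolution of the compactly supported $m$ with the Gaussian of scale $\lambda^{-1/2}$, it extends to an entire function satisfying $|m_\lambda(\alpha+i\beta)|\lesssim e^{\lambda\beta^2/4}$ with $L^1$-norm on horizontal lines bounded accordingly. Shifting the contour $\eta=\alpha\pm i\beta$ (sign opposite to that of $r$) and optimizing in $\beta$ yields the Gaussian bound $|K_{M^\mu_\lambda}(r)| \lesssim \mu\, e^{-c r^2 \mu^2/\lambda}$. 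Second, since $f_{j,\lambda}(\cdot,x)$ is $f_j(\cdot,x)$ convolved in $t$ with a Gaussian of variance $\sim 1/\lambda$, one has the pointwise estimate $|f_{j,\lambda}(t,x)| \lesssim \|f_j\|_\infty e^{-c\,d_j(t,x)^2 \lambda}$, where $d_j(t,x):=\mathrm{dist}\bigl(t,\{s:(s,x)\in \mathrm{supp}\,f_j\}\bigr)$, with the convention that $d_j = +\infty$ if the slice is empty.

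To conclude, I would invoke a triangle inequality: for any $x$ where both $t$-slices are nonempty, the Euclidean distance hypothesis on $\R^{1+n}$ forces them to be separated by at least $d$ (since same-$x$ pairs have Euclidean distance equal to $|t-s|$), so $d_1(t,x) + |t-s| + d_2(s,x) \geq d$ for all $t,s$. Splitting the $s$-integral at $|t-s| = d/3$, the far region contributes $e^{-cd^2\mu^2/\lambda}$ via the Gaussian decay of $K_{M^\mu_\lambda}$, while in the near region at least one of $d_1, d_2$ is $\geq d/3$, so one of the $f_{j,\lambda}$ factors contributes $e^{-cd^2\lambda}$. Summing and inserting $\|f_j\|_\infty$ gives the Schur bound, hence the $L^2$ estimate; the derivative reduction above then yields the claim. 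The main obstacle I expect is the Gaussian decay estimate on $K_{M^\mu_\lambda}$: this is not immediate from the $L^\infty$ bound on the symbol and requires the contour-shift argument controlled by the explicit $e^{\lambda\beta^2/4}$ growth of $m_\lambda$ in the imaginary direction; the rest is essentially bookkeeping via Schur's test.
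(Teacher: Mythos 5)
Your overall strategy --- reduce to $k=0$ via Leibniz and commutation of $M^\mu_\lambda$ with derivatives, fix $x$ and run Schur's test in the time variable, combine Gaussian decay of the regularized cut-offs with decay of the kernel of $M^\mu_\lambda$, and split the integral at $|t-s|=d/3$ --- is correct in outline and is essentially the argument used for this lemma. The reduction to $L^2\to L^2$, the triangle-inequality observation $d_1(t,x)+|t-s|+d_2(s,x)\ge d$, and the estimate $|f_{j,\lambda}(t,x)|\lesssim\|f_j\|_\infty e^{-c\,d_j(t,x)^2\lambda}$ are all fine. (Minor typo: the contour is shifted to $\Im\eta$ of the \emph{same} sign as $r$, not opposite.)

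However, there is a genuine gap in the Schur step. The Gaussian kernel bound $|K_{M^\mu_\lambda}(r)|\lesssim\mu\,e^{-cr^2\mu^2/\lambda}$ alone is not strong enough: its $L^1$ norm is $\int_{\mathbb R}\mu\,e^{-cr^2\mu^2/\lambda}\,dr\sim\sqrt\lambda$, so both your ``far'' and ``near'' contributions come with an extra factor $\sqrt\lambda$ (more precisely, the far region yields roughly $\tfrac{\lambda}{d\mu}e^{-cd^2\mu^2/\lambda}$ and the near region $\sqrt\lambda\,e^{-cd^2\lambda}$). These polynomial factors cannot be absorbed into $e^{-cd^2\mu^2/\lambda}+e^{-cd^2\lambda}$ with constants independent of $d$: for instance $\sup_{\lambda>0}\sqrt\lambda\,e^{-cd^2\lambda}\sim 1/d$. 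So your argument as written produces an extra $1/d$ in front, which is not allowed by the statement.

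The missing ingredient is that the kernel of $M^\mu_\lambda$ also has rapid polynomial decay, coming from the smoothness and compact support of $m$ (not just its boundedness and the analyticity of $m_\lambda$). The cleanest way to see this is the explicit formula: since $m_\lambda=e^{-D^2/\lambda}m$ and Gaussians transform to Gaussians,
\begin{align}
K_{M^\mu_\lambda}(r)=\frac{\mu}{2\pi}\,\hat m(-r\mu)\,e^{-r^2\mu^2/\lambda},
\end{align}
with $\hat m$ Schwartz. This gives simultaneously the Gaussian factor you derived \emph{and} a $\langle r\mu\rangle^{-N}$ factor, whence $\|K_{M^\mu_\lambda}\|_{L^1}\lesssim\|\hat m\|_{L^1}\lesssim 1$ uniformly in $\mu,\lambda$. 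With this, the far region contributes $e^{-cd^2\mu^2/\lambda}\int_{\mathbb R}\mu|\hat m(-r\mu)|\,dr\lesssim e^{-cd^2\mu^2/\lambda}$ and the near region contributes $e^{-cd^2\lambda}\|K_{M^\mu_\lambda}\|_{L^1}\lesssim e^{-cd^2\lambda}$, with no stray factors. (Alternatively, one can keep your contour argument for the Gaussian and separately integrate by parts $N$ times in $\eta$, using $\partial^N_\eta m_\lambda=(\partial^N m)_\lambda$ and $\|(\partial^N m)_\lambda\|_{L^1}\le\|\partial^N m\|_{L^1}$, to get $|K_{M^\mu_\lambda}(r)|\lesssim\mu\langle r\mu\rangle^{-N}$; interpolating with the Gaussian bound then closes the gap.)
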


	\begin{lem}[See Lemma 2.11 in \cite{Laurent_2018}]
		\label{lemma 2.11 from ll}
		There exist $C, c>0$ such that for all $f \in C^\infty_0(\R^{n+1})$, $\mu>0$, $\lambda>0$ and $u  \in H^1(\R^{n+1})$, one has
		$$
		\norm{ M^\mu_\lambda  f_\lambda u}{H^1}{} \leq \norm{f_\lambda M^{2\mu}_\lambda u }{H^1}{}+C \norm{f}{W^{1,\infty}}{} \left(e^{-c \frac{\mu^2}{\lambda}}+e^{-c \lambda}\right)\norm{u}{H^1}{}.
		$$ 
    Moreover, there exist $C,c >0$  such that for any $f_1 \in C^\infty_0(\R^{n+1})$ with $\dist(\supp f, \supp (1-f_1))=d >0$ , for
all $\mu, \lambda > 0$ and all $u \in H^1(\R^{n+1})$ we have
\begin{equation*}
    \norm{f_\lambda M^\mu_\lambda u}{H^1}{}\leq C  \norm{f}{W^{1,\infty}}{} \norm{f_{1,\lambda}M^\mu_\lambda u}{H^k}{}+C \norm{f}{W^{1,\infty}}{} \norm{f_1}{W^{1,\infty}}{}(e^\frac{c d^2 \mu^2}{\lambda}+e^{-c d^2\lambda}) \norm{u}{H^1}{}.
\end{equation*}
	\end{lem}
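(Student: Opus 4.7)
The strategy for both inequalities is to work on the time-Fourier side, where $M^\mu_\lambda$ becomes multiplication by the symbol $m_\lambda(\xi_t/\mu)$ while multiplication by $f_\lambda$ becomes convolution against $\hat{f}_\lambda(\cdot, x)$, whose Gaussian decay $|\hat{f}_\lambda(\zeta, x)| \lesssim e^{-\zeta^2/\lambda}|\hat{f}(\zeta, x)|$ is the key analytic input. Two distinct exponentially small errors then arise: one of size $e^{-c\mu^2/\lambda}$, produced whenever one has to bridge a frequency gap of size $\mu$ using only the Gaussian spread $\sqrt{\lambda}$ coming from $f_\lambda$; and one of size $e^{-c\lambda}$, coming from the fact that $m_\lambda$ differs from the compactly supported $m$ only in tails of Gaussian size $\sqrt{1/\lambda}$.

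For the first inequality I would split
\begin{align*}
M^\mu_\lambda f_\lambda u = M^\mu_\lambda f_\lambda M^{2\mu}_\lambda u + M^\mu_\lambda f_\lambda (1-M^{2\mu}_\lambda) u,
\end{align*}
then commute $M^\mu_\lambda$ past $f_\lambda$ in the first piece. The commutator identity
\begin{align*}
\widehat{[M^\mu_\lambda, f_\lambda] v}(\xi_t, x) = \int \bigl[m_\lambda(\xi_t/\mu) - m_\lambda(\eta/\mu)\bigr] \hat{f}_\lambda(\xi_t - \eta, x)\, \hat{v}(\eta, x)\, d\eta,
\end{align*}
combined with the Gaussian bound on $\hat{f}_\lambda$ and a contour-shift argument exploiting the analyticity of $m_\lambda$ as an entire function with Gaussian control, yields an $H^1 \to H^1$ bound of order $e^{-c\mu^2/\lambda}$ for $[M^\mu_\lambda, f_\lambda]$. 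For the second piece, the key observation is the exact identity $m(\xi/\mu) m(\xi/(2\mu)) = m(\xi/\mu)$ for the sharp cutoffs (since $m=1$ on $[-3/4, 3/4]$), so that replacing $m$ by $m_\lambda$ costs only an $e^{-c\lambda}$ tail error; the remaining interaction between $M^\mu_\lambda$ and $(1 - M^{2\mu}_\lambda)$ across the $\hat{f}_\lambda$-convolution again forces the Gaussian to bridge a frequency gap of size $\mu$, producing $e^{-c\mu^2/\lambda}$.

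For the second inequality the starting point is the identity $1 - f_{1,\lambda} = (1-f_1)_\lambda$, which holds because $e^{-D_t^2/\lambda}$ fixes the constant $1$. This gives the clean splitting
\begin{align*}
f_\lambda M^\mu_\lambda u = f_\lambda\, f_{1,\lambda}\, M^\mu_\lambda u + f_\lambda (1-f_1)_\lambda\, M^\mu_\lambda u,
\end{align*}
the first term being controlled directly by $\|f\|_{W^{1,\infty}} \|f_{1,\lambda} M^\mu_\lambda u\|_{H^k}$ via an $H^k$ product rule, and the second by Lemma~\ref{lemma 2.4 from ll} applied to the disjointly supported pair $(f, 1-f_1)$, producing an $e^{-c d^2 \lambda}$ prefactor. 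To recover the $e^{-c d^2 \mu^2/\lambda}$ term in the statement (the sign appearing there is a typographical slip and should be negative) one further commutes $M^\mu_\lambda$ through $f_\lambda (1-f_1)_\lambda$ in the spirit of Lemma~\ref{lemma 2.10 from ll}, with the spatial gap $d$ now playing the role that $1$ did in the first inequality and rescaling the effective frequency separation to $d\mu$.

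The main obstacle is to execute the contour-shift argument cleanly while keeping track of the two simultaneous small parameters $\mu^2/\lambda$ and $\lambda$ (and, for the second inequality, the spatial distance $d$). In particular the analyticity of $m_\lambda$ is essential, and one must verify that the Gaussian weight $e^{-(\xi_t - \eta)^2/\lambda}$ on $\hat{f}_\lambda$ dominates the exponential growth of $m_\lambda$ along imaginary directions after shifting the $\eta$-contour; this delicate cancellation is precisely what produces the sharp exponent $\mu^2/\lambda$ rather than a weaker $\mu/\sqrt{\lambda}$. Carrying this through with explicit constants, uniform in $u$ and in $f$ (up to $W^{1,\infty}$ norms), then yields both stated inequalities.
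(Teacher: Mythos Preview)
The paper does not supply its own proof of this lemma; it is quoted verbatim from \cite{Laurent_2018} and used as a black box. So there is nothing in the present paper to compare your argument against. That said, your sketch is largely on the right track, with one genuine error worth flagging.

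Your claim that the commutator $[M^\mu_\lambda, f_\lambda]$ is itself of size $e^{-c\mu^2/\lambda}$ in $H^1\to H^1$ norm is false. The kernel in the time-Fourier variable is $[m_\lambda(\xi_t/\mu)-m_\lambda(\eta/\mu)]\,\hat f_\lambda(\xi_t-\eta,x)$; on the region $|\xi_t-\eta|\lesssim\sqrt\lambda$ where $\hat f_\lambda$ is not yet small, the bracket is only $O(|\xi_t-\eta|/\mu)=O(\sqrt\lambda/\mu)$ by the mean value theorem, and no contour shift improves this to an exponential: shifting $\eta$ into the complex makes $m_\lambda(\eta/\mu)$ grow like $e^{\lambda(\Im\eta)^2/4\mu^2}$ while $\hat f_\lambda$ grows like $e^{(\Im\eta)^2/\lambda}$, so the balance never produces $e^{-c\mu^2/\lambda}$. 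The commutator is only polynomially small.

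Fortunately this step is unnecessary. For the first piece $M^\mu_\lambda f_\lambda M^{2\mu}_\lambda u$ you do not need to commute anything: since $0\le m_\lambda\le 1$, the multiplier $M^\mu_\lambda$ has operator norm at most $1$ on $H^1$, so $\|M^\mu_\lambda f_\lambda M^{2\mu}_\lambda u\|_{H^1}\le \|f_\lambda M^{2\mu}_\lambda u\|_{H^1}$ with constant exactly $1$, matching the statement. All the exponential errors come from the second piece $M^\mu_\lambda f_\lambda(1-M^{2\mu}_\lambda)u$, and your analysis of that piece (frequency gap of size $\sim\mu$ bridged by the Gaussian $\hat f_\lambda$, plus an $e^{-c\lambda}$ cost for passing between $m$ and $m_\lambda$) is correct. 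Your treatment of the second inequality via $1=f_{1,\lambda}+(1-f_1)_\lambda$ together with Lemma~\ref{lemma 2.4 from ll} and the mechanism of Lemma~\ref{lemma 2.10 from ll} is also the right route, and you are correct that the exponent $e^{cd^2\mu^2/\lambda}$ in the paper's display is a sign typo.
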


	\begin{lem}[Lemma 2.13 in \cite{Laurent_2018}]
		\label{lemma 2.13 from ll}
		There exists $C>0$ such that for all $D  \in \mathbb{R}$, $\tilde{\chi} \in L^\infty(\R) $ such that $\supp{\tilde{\chi}} \subset (-\infty, D],$ for all $\lambda, \tau >0$, we have
		$$
		\norm{e^{\tau \psi } \tilde{\chi}_\lambda(\psi)}{L^\infty}{}\leq C \norm{\tchi}{L^\infty}{}\langle \lambda \rangle^{1/2}e^{D \tau}e^{\frac{\tau^2}{\lambda}},
		$$
		for all $\psi \in C^0(\R^{n+1}; \R).$
		
	\end{lem}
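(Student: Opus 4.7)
The plan is to reduce the $L^\infty$ estimate on $\R^{1+n}$ to a pointwise one-dimensional estimate. Since $\tilde{\chi}_\lambda(\psi(x))$ is the composition of the regularized function $\tilde{\chi}_\lambda \colon \R \to \R$ with $\psi$, and $\psi$ is real-valued, it suffices to prove that
\begin{align*}
\sup_{s\in\R} |e^{\tau s} \tilde{\chi}_\lambda(s)| \leq C \norm{\tchi}{L^\infty}{} \langle \lambda \rangle^{1/2} e^{D\tau} e^{\tau^2/\lambda}.
\end{align*}
Unfolding the definition of the regularization,
\begin{align*}
e^{\tau s} \tilde{\chi}_\lambda(s) = \left(\frac{\lambda}{4\pi}\right)^{1/2} \int_{-\infty}^{D} \tilde{\chi}(u) e^{\tau s - \frac{\lambda}{4}(s-u)^2} \, du,
\end{align*}
where the support assumption $\supp \tilde{\chi} \subset (-\infty, D]$ restricts the range of integration to $u \leq D$.

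The key algebraic step is to complete the square in $s-u$, which is where the characteristic factor $e^{\tau^2/\lambda}$ appears:
\begin{align*}
\tau s - \frac{\lambda}{4}(s-u)^2 = \tau u + \tau(s-u) - \frac{\lambda}{4}(s-u)^2 = \tau u + \frac{\tau^2}{\lambda} - \frac{\lambda}{4}\Bigl(s-u - \frac{2\tau}{\lambda}\Bigr)^2.
\end{align*}
Since $\tau > 0$ and $u \leq D$ on the support of $\tilde{\chi}$, one has $e^{\tau u} \leq e^{\tau D}$. Substituting this back and pulling constants out of the integral,
\begin{align*}
|e^{\tau s} \tilde{\chi}_\lambda(s)| \leq \norm{\tchi}{L^\infty}{} e^{\tau D} e^{\tau^2/\lambda} \left(\frac{\lambda}{4\pi}\right)^{1/2} \int_{\R} e^{-\frac{\lambda}{4}\left(s-u-\frac{2\tau}{\lambda}\right)^2} \, du.
\end{align*}

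The translation invariance of Lebesgue measure reduces the remaining integral, via $v = s - u - 2\tau/\lambda$, to the standard Gaussian $\int_\R e^{-\lambda v^2/4} dv = (4\pi/\lambda)^{1/2}$, which exactly cancels the $(\lambda/(4\pi))^{1/2}$ prefactor. This already yields the bound $\norm{\tchi}{L^\infty}{} e^{\tau D} e^{\tau^2/\lambda}$, which is in fact stronger than the claimed estimate; the factor $\langle \lambda \rangle^{1/2}$ in the statement is mere slack, convenient later when interfacing with other lemmata. There is no real obstacle here: everything hinges on the completion of the square, and the support condition together with the sign of $\tau$ makes the boundary term $e^{\tau u} \leq e^{\tau D}$ free.
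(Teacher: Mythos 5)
Your proof is correct and self-contained: reducing to the one-variable estimate $\sup_s |e^{\tau s}\tilde\chi_\lambda(s)|$, completing the square to produce $e^{\tau u}e^{\tau^2/\lambda}$, using $\tau>0$ and $\supp\tilde\chi\subset(-\infty,D]$ to bound $e^{\tau u}\le e^{D\tau}$, and letting the Gaussian integral cancel the $(\lambda/4\pi)^{1/2}$ prefactor are all valid steps. The paper itself states this as Lemma~2.13 cited from Laurent--L\'eautaud and does not reproduce an argument, so there is no in-paper proof to compare against; this is the standard route, and your computation in fact gives the sharper bound without the $\langle\lambda\rangle^{1/2}$ factor, which---with the normalized heat kernel $f_\lambda=(\lambda/4\pi)^{1/2}\int f(u)e^{-\lambda|t-u|^2/4}\,du$ used in this paper---is indeed superfluous and only retained for consistency with the cited reference.
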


	\begin{lem}[Lemma 2.14 in \cite{Laurent_2018}]
		\label{lemma 2.14 from ll}
		There exist $C, c>0$ such that, for any $\ve, \tau, \lambda, \mu>0$, for any $k \in \mathbb{N}$, we have:
		$$
		\norm{e^{- \frac{\ve D_t^2}{2\tau}}(1-M_\mu^\lambda)}{H^k\rightarrow H^k}{}\leq e^{-\frac{\ve \mu^2}{8 \tau}}+Ce^{-c \lambda}.
		$$
		
	\end{lem}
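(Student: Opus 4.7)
The plan is to observe that $e^{-\frac{\ve D_t^2}{2\tau}}(1 - M_\mu^\lambda)$ is a Fourier multiplier acting only in the $t$ variable. Since multipliers in $t$ commute with $\langle D_t, D_x\rangle^k$, the $H^k \to H^k$ operator norm equals the $L^2 \to L^2$ norm, which by Plancherel equals the $L^\infty$ norm of the symbol
\begin{equation*}
\xi_t \longmapsto e^{-\frac{\ve \xi_t^2}{2\tau}}\left(1 - m_\lambda\!\left(\tfrac{\xi_t}{\mu}\right)\right).
\end{equation*}
So the whole problem reduces to estimating this scalar function in $L^\infty(\R)$.

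The natural split is between low and high frequencies, at the threshold $|\xi_t| = \mu/2$. On the high-frequency side $|\xi_t| \ge \mu/2$, the factor $|1 - m_\lambda(\xi_t/\mu)|$ is bounded uniformly (since $m$ is in $L^\infty$ and Gaussian convolution does not increase the sup norm), while the Gaussian factor gives $e^{-\frac{\ve \xi_t^2}{2\tau}} \le e^{-\frac{\ve \mu^2}{8\tau}}$. This produces the first term of the bound (with the slack between $\mu^2/2$ and $\mu^2/8$ absorbing the uniform constant on $|1-m_\lambda|$).

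On the low-frequency side $|\xi_t| < \mu/2$, the argument $\eta = \xi_t/\mu$ lies in $(-1/2, 1/2) \subset (-3/4, 3/4)$, the region where $m \equiv 1$. Hence $1 - m_\lambda(\eta) = m(\eta) - m_\lambda(\eta)$, which we write as the Gaussian-convolution representation
\begin{equation*}
m(\eta) - m_\lambda(\eta) = \left(\frac{\lambda}{4\pi}\right)^{1/2}\int (m(\eta) - m(s))\, e^{-\lambda(\eta-s)^2/4}\, ds.
\end{equation*}
The integrand vanishes on $|s| < 3/4$ (where $m(s) = 1 = m(\eta)$), so the effective integration range is $|s| \ge 3/4$, on which $|\eta - s| \ge 1/4$. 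A standard Gaussian tail estimate then yields $|1 - m_\lambda(\eta)| \le C e^{-c\lambda}$ for absolute constants $C,c>0$. Together with the trivial bound $e^{-\frac{\ve \xi_t^2}{2\tau}} \le 1$ on this region, this gives the second term of the stated bound.

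The only non-routine point is the explicit Gaussian tail estimate in the low-frequency region, but this is a direct calculation and not a genuine obstacle; everything else is a symbolic manipulation on the Fourier side. Putting the two regions together yields the claimed $L^\infty$ bound on the symbol and hence the operator norm estimate.
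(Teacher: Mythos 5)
Your argument is correct and is the standard Fourier-side computation: reduce the $H^k\to H^k$ operator norm to the $L^\infty$ norm of the one-dimensional symbol $e^{-\ve\xi_t^2/(2\tau)}\bigl(1-m_\lambda(\xi_t/\mu)\bigr)$, split at $|\xi_t|=\mu/2$, and estimate the two regimes by the Gaussian decay and by a Gaussian tail bound (which is exactly the content of Lemma~\ref{lemma 2.3 from ll} applied to $1-m$ and $\mathds{1}_{(-1/2,1/2)}$). The paper does not prove this lemma itself but cites it from \cite{Laurent_2018}, where the argument runs along the same lines, so there is no real divergence of method. One small slip in your write-up: there is no ``slack'' between $\mu^2/2$ and $\mu^2/8$ available to absorb a constant on $|1-m_\lambda|$ --- the $8$ comes exactly from evaluating $\xi_t^2/2$ at $\xi_t=\mu/2$, and since the inequality must hold for \emph{all} $\ve,\tau,\mu>0$ the factor $e^{-\ve\mu^2/(8\tau)}$ can be arbitrarily close to $1$, so no multiplicative constant can be absorbed into it. What saves the bound is the pointwise estimate $|1-m_\lambda|\leq 1$: the cutoff $m$ takes values in $[0,1]$ and the Gaussian heat kernel is a probability density, so $m_\lambda$ again takes values in $[0,1]$ and $1-m_\lambda\in[0,1]$ as well. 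With that correction the proof is complete.
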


	\begin{lem}[See Lemma 2.17 in \cite{Laurent_2018}]
		\label{lemma 2.16 from ll}
		Let $\psi$ be a smooth real valued function on $\R_t \times \R^n_x$, which is a quadratic polynomial in $t$, let $R_\sigma>0$, and $\sigma \in C^\infty_0(\{\phi \geq 2 \delta \})$. Let $\chi \in C^\infty_0(\R)$ with $\supp(\chi)\subset (-\infty,\zeta),$ and $\tchi\in C^\infty_0(\R)$ such that $\tchi=1$ on a neighborhood of $(-\infty,3/2 \zeta)$, $\supp{\tchi}\subset (-\infty,2\zeta)$. Let $f$ be bounded, compactly supported and real analytic in the variable $t$ in a neighborhood of $\overline{\{\phi \geq 2 \delta \}}$ and define 
		$$
		g:=e^{\tau \psi}\chi_{ \lambda}(\psi)\tchi(\psi)f \sigma_\lambda,
		$$
		where $\psi=\phi-\g$, $\g>0$ and $\zeta$ is defined in~\eqref{def_of_zeta}. Then one has the following estimate: for all $c>0$ there exist $c_0, C, N>0$ such that for any $\delta>0$, $\tau, \mu \geq 1$ and $\frac{\mu}{c}  \leq \lambda \leq c \mu,$ we have
		$$
		\norm{M_\lambda^{\mu/2}g(1-M_\lambda^\mu)}{L^2\rightarrow L^2}{}\leq \frac{C}{\delta^N}\tau^N e^{\frac{\tau^2}{\lambda}}e^{2 \zeta \tau}e^{-c_0 \delta^4\mu}.
		$$
	\end{lem}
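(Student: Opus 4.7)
The plan is to exploit the strong time-frequency separation between $M_\lambda^{\mu/2}$, which essentially projects onto $|\xi_t|\lesssim \mu/2$, and $1-M_\lambda^\mu$, which essentially projects onto $|\xi_t|\gtrsim \mu$. Multiplication by $g$ is the only source of frequency coupling between these two operators, so after passing to the $t$-Fourier side, it suffices to control the $t$-Fourier transform $\widehat{g}(\cdot,x)$ at frequencies of order $\gtrsim \mu/2$. The crucial feature is that $g$ is almost analytic in $t$: the factors $e^{\tau\psi}$, $\chi_\lambda(\psi)$, $\sigma_\lambda$ are entire in $t$ (since $\psi$ is a quadratic polynomial in $t$ and $\chi_\lambda,\sigma_\lambda$ are heat-smoothings), $f$ is real analytic in $t$ in a neighborhood of $\overline{\{\phi\geq 2\delta\}}$, and only $\tilde\chi(\psi)$ fails to be analytic. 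This is essentially the situation of~\cite[Lemma~2.17]{Laurent_2018}, and the novelty of the present version lies in tracking the $\delta$-dependence of the constants.

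First I would split $g = g_1 + g_2$ with $g_1 = e^{\tau\psi}\chi_\lambda(\psi)\,f\,\sigma_\lambda$ (the analytic-in-$t$ part) and $g_2 = e^{\tau\psi}\chi_\lambda(\psi)(\tilde\chi(\psi)-1)\,f\,\sigma_\lambda$ (the non-analytic remainder). Since $\supp(\chi)\subset(-\infty,\zeta)$ while $\tilde\chi=1$ on a neighborhood of $(-\infty,\tfrac{3\zeta}{2})$, the supports of $\chi$ and $1-\tilde\chi$ on $\R$ are separated by a distance $\gtrsim \zeta \sim \delta^2$. Applying Lemma~\ref{lemma 2.3 from ll} to $\chi_\lambda\cdot(1-\tilde\chi)$ produces a factor $e^{-c\delta^4\lambda}$; combined with the pointwise weight bound $\|e^{\tau\psi}\chi_\lambda(\psi)\|_{L^\infty}\lesssim \lambda^{1/2}e^{\zeta\tau}e^{\tau^2/\lambda}$ from Lemma~\ref{lemma 2.13 from ll} and the uniform boundedness of $f\sigma_\lambda$, this gives $\|g_2\|_{L^\infty}\lesssim C\tau^N e^{\tau^2/\lambda}e^{2\zeta\tau}e^{-c\delta^4\lambda}$. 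Since $\lambda\sim\mu$, the contribution of $g_2$ to the operator norm, bounded trivially by $\|g_2\|_{L^\infty}$, is already of the desired form.

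For the main term $M_\lambda^{\mu/2}g_1(1-M_\lambda^\mu)$, I would work in the time-Fourier variable at fixed $x$. The kernel on the Fourier side reads
\[
K(\xi,\eta,x) = m_\lambda(2\xi/\mu)\,\widehat{g_1}(\xi-\eta,x)\,\bigl(1-m_\lambda(\eta/\mu)\bigr).
\]
Outside the region $|\xi-\eta|\gtrsim \mu/2$, one of the two multiplier factors forces a point into the Gaussian tail of $m_\lambda$, costing at worst $e^{-c\mu^2/\lambda}\sim e^{-c\mu}$. Inside $|\xi-\eta|\gtrsim \mu/2$, I would exploit the analyticity of $t\mapsto g_1(t+is,x)$ in a horizontal strip $|s|\leq \theta$: shifting the contour in the defining inverse Fourier integral and using the explicit quadratic form of $\psi$ in $t$ yields
\[
|\widehat{g_1}(\zeta,x)|\lesssim C\tau^N e^{\tau^2/\lambda}e^{2\zeta\tau}e^{-\theta|\zeta|},\qquad \zeta\in\R.
\]
Applied with $|\zeta|\gtrsim\mu$, combined with a Schur-test argument on $(\xi,\eta)$ using the uniform $L^1$ bounds on the Gaussian $m_\lambda$ factors, this gives the claimed $L^2\to L^2$ bound $\lesssim \delta^{-N}\tau^N e^{\tau^2/\lambda}e^{2\zeta\tau}e^{-c_0\delta^4\mu}$.

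The main obstacle is quantifying the width $\theta$ of the analyticity strip of $g_1$ in terms of $\delta$. Writing $t\to t+is$, the quadratic polynomial $\psi$ acquires a real shift of size $s^2/2$ and an imaginary part of size $|s||t|$; keeping the weight $e^{\tau\psi}$ controlled by $e^{\tau^2/\lambda}$ forces $|s|\lesssim \tau/\lambda$, while keeping $\chi_\lambda(\psi)$ bounded requires the imaginary perturbation of $\psi$ to stay inside the transition region of $\chi$, which has width $\sim\delta^2$. Balancing these constraints together with the similar ones coming from the heat-extension of $\sigma_\lambda$ and from the analyticity neighborhood of $f$ near $\overline{\{\phi\geq 2\delta\}}$, the support separation $\sim \delta^2$ produces, upon squaring in the Gaussian contour estimate, the final rate $\theta\sim \delta^4$, and hence the exponent $e^{-c_0\delta^4\mu}$ in the statement.
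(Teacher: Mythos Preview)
Your approach is correct and is essentially the Laurent--L\'eautaud argument (their Lemmas~2.15 and~2.17) that the paper simply defers to; the paper's own proof consists only of the remark that the support of $\sigma$ being $\{\phi\geq 2\delta\}$ rather than a ball changes nothing, and that the explicit $\delta$-dependence can be read off from the constants in \cite[Lemma~2.15]{Laurent_2018}. Your sketch is therefore more detailed than the paper's, and while your final heuristic for $\theta\sim\delta^4$ is somewhat loose (and you overload the symbol $\zeta$ as both the paper's constant and a Fourier variable), the overall structure---splitting off the non-analytic $\tilde\chi$ factor via the $\sim\delta^2$ support separation and Lemma~\ref{lemma 2.3 from ll}, then exploiting analyticity in $t$ of the remaining factor via a contour shift and a Schur-type bound on the Fourier side---is precisely the Laurent--L\'eautaud mechanism.
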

	\begin{proof}
		The only difference with respect to Lemma 2.17 in~\cite{Laurent_2018} is that $\sigma$ is supported in $\{\phi \geq 2 \delta\}$ instead of a ball, which does not change the proof. The explicit dependence with respect to the parameter $\delta$ follows from the more precise statement given in Lemma 2.15 in~\cite{Laurent_2018}.
	\end{proof}
	
	\bigskip
	
	The following is the complex analysis lemma which corresponds to~\cite[3.11]{Laurent_2018}, which we state here in our context with explicit dependence of $d_0, \beta_0$ on $\delta.$
	
	\begin{lem}
		\label{lem_harmonic_append}
		Let $C_1, C_2> 0$ depending on $r_0,n$ only. There exist $C, M>0$ depending on $r_0,n$ only such that the following property holds for $0<\delta<1$. Let $\kappa>0$ and define $d_0=\min(\delta^M, \delta^M \kappa)$. For any $0<d\leq d_0$, define $\beta_0= d \delta^M$. Then, for any $0<\beta \leq \beta_0$ and all $\mu \geq \frac{C}{\delta^8 \beta}$, the following holds: for every holomorphic function $H$ in $Q_1=\R^*_+ +i\R^*_+$, continuous on $\overline{Q_1}$ and satisfying
		\begin{align}
			&|H(i \tau )| \leq e^{\frac{\ve}{2 \tau} \beta^2 \mu^2}  e^{C_1 \tau^2/ \mu} \max \left(e^{- \kappa \delta^4 \mu}, e^{-\frac{\ve \mu^2}{8 \tau}}, e^{-9\zeta \tau} \right), \\
			& |H(\vs)| \leq e^{C_2 \Im \vs}, \quad \textnormal{on } \overline{Q_1},
		\end{align}
		we have
		\begin{align}
			|H(\vs)| \leq e^{-8 \zeta \Im \vs} \quad \textnormal{on } \overline{Q_1} \cap \{d\mu/4 \leq |\vs| \leq 2 d \mu\}.
		\end{align}
	\end{lem}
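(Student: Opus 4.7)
My plan is to carry out a Phragmén-Lindelöf / two-constants argument in the first quadrant $Q_1$, in the same spirit as Lemma 3.11 of \cite{Laurent_2018}, but with explicit tracking of the $\delta$-dependence of the thresholds $d_0$ and $\beta_0$. The starting point is the harmonic measure of $Q_1$: under the conformal map $\vs \mapsto \vs^2$, the quadrant $Q_1$ is sent to the upper half-plane, with the imaginary axis mapping to $(-\infty,0)$ and the positive real axis to $(0,\infty)$; the standard Poisson integral then gives that the harmonic measure of the imaginary axis at $\vs = r e^{i\theta} \in Q_1$ equals $2\theta/\pi$. The a priori growth $|H(\vs)| \le e^{C_2 \Im \vs}$ is of order $1 < 2$, which is exactly the threshold that makes Phragmén-Lindelöf legitimate in a quadrant.

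Next, I would absorb the Gaussian factor $e^{C_1 \tau^2/\mu}$ on the imaginary axis by passing to the auxiliary holomorphic function $G(\vs) := H(\vs)\, e^{C_1 \vs^2/\mu}$, whose modulus is $|G(\vs)| = |H(\vs)|\, e^{C_1((\Re \vs)^2 - (\Im \vs)^2)/\mu}$. At $\vs = i\tau$ this kills the $e^{C_1 \tau^2/\mu}$ factor in the hypothesis, leaving
\[
|G(i\tau)| \le e^{\ve \beta^2 \mu^2/(2\tau)}\, \max\bigl(e^{-\kappa \delta^4 \mu},\ e^{-\ve \mu^2/(8\tau)},\ e^{-9\zeta \tau}\bigr).
\]
The three terms in the max are designed to cover three regimes of $\tau$: the first gives $\tau$-independent smallness, the second controls $\tau$ in the intermediate range $\tau \sim \mu$, and the third provides the decay for large $\tau$. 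Splitting the imaginary axis accordingly and imposing $\beta \le d\delta^M$ together with $\mu \ge C/(\delta^8 \beta)$ for $M$ large, the nuisance prefactor $e^{\ve \beta^2 \mu^2/(2\tau)}$ is defeated by the competing decay on each sub-interval. This produces a uniform bound $|G(i\tau)| \le e^{-\alpha \mu}$ on the whole imaginary axis, with $\alpha = \alpha(\delta,\kappa,d) \gtrsim \min(\kappa \delta^M, d\delta^M)$.

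I would then apply the two-constants theorem to $G$ on $Q_1$. On the real axis the a priori bound yields $|G(s)| \le e^{C_1 s^2/\mu}$, which is at most $e^{4 C_1 d^2 \mu}$ throughout the region $|\vs| \le 2d\mu$ that we care about. Combining the uniform imaginary-axis bound with this real-axis bound via the harmonic-measure interpolation gives, on the sector $\theta$ bounded below by a threshold $\theta_0 \sim d^2/\alpha$,
\[
|G(\vs)| \le e^{4 C_1 d^2 \mu\, (1 - 2\theta/\pi)}\, e^{-\alpha \mu \cdot 2\theta/\pi}.
\]
Undoing the gauge adds a factor $e^{-C_1((\Re\vs)^2 - (\Im\vs)^2)/\mu}$, which near the real axis contributes additional decay $e^{-C_1 r^2/\mu}$ that compensates the real-axis growth, while near the imaginary axis it is harmless. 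On the annulus $\{d\mu/4 \le |\vs| \le 2 d\mu\}$, using $\Im \vs = r \sin \theta \le 2 d \mu\, \theta$ converts the harmonic-measure decay into a bound $|H(\vs)| \le e^{-8\zeta \Im \vs}$, provided $\alpha / d \gtrsim \zeta \sim \delta^2$.

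The main obstacle will be the quantitative bookkeeping in this last step. The harmonic-measure decay is proportional to $\theta$ while the target decay $8\zeta \Im \vs$ is proportional to $\sin \theta$; the real-axis growth $e^{4 C_1 d^2 \mu}$ and the gauge correction must be absorbed uniformly in $\theta$; and the Gaussian prefactor $e^{\ve \beta^2 \mu^2/(2\tau)}$ must remain tame even for $\tau$ as small as $\delta^N \mu$. These constraints are precisely what dictate the polynomial power $M$ appearing in the choices $d_0 = \min(\delta^M, \kappa\delta^M)$ and $\beta_0 = d\delta^M$, as well as the threshold $\mu \ge C/(\delta^8 \beta)$ stated in the lemma.
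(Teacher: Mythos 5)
Your sketch aims to reprove the underlying Phragm\'en--Lindel\"of / two-constants argument from scratch, whereas the paper's proof is a short bookkeeping argument: it simply cites Lemma~B.2 of \cite{Laurent_2018} and reads off, from an inspection of that proof, the three explicit conditions ($\beta\sqrt{4/\zeta} < \min(\zeta/(4C_1),\kappa\delta^4/(9\zeta),\sqrt{\ve}/(3\sqrt{\zeta}))$, then $2d\le\min(\nu D,D/2)$ with $\nu\ge\zeta^A/D$, and finally $\beta\le d\sqrt{\zeta}/16$), substitutes $\ve\sim\delta$ and $\zeta\sim\delta^2$, and concludes that $d_0=\min(\delta^M,\kappa\delta^M)$ and $\beta_0=d\delta^M$ suffice. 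In other words, the paper delegates the harmonic-analysis content entirely to the cited lemma; the novelty here is only the $\delta$-dependence of the constants.

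Your route is therefore genuinely different, and two concrete gaps appear in it. First, the gauge $G(\vs)=H(\vs)e^{C_1\vs^2/\mu}$ turns the a priori order-$1$ bound $|H(\vs)|\le e^{C_2\Im\vs}$ into an order-$2$ bound on $G$ (the factor $e^{C_1(\Re\vs)^2/\mu}$ is unbounded along the positive real axis), and order $2$ is exactly the critical growth rate at which Phragm\'en--Lindel\"of fails in a sector of opening $\pi/2$ --- think of $e^{A\vs^2}$. The real-axis bound $e^{4C_1d^2\mu}$ you invoke only holds on $\{|\vs|\le 2d\mu\}$, yet you propose to apply the two-constants theorem on all of $Q_1$; the argument as written provides no control on $G$ along the arc at radius $R$ or at infinity, and adding that control (by working on a truncated quarter-disc and estimating the harmonic measure of the outer arc, as Laurent--L\'eautaud actually do) is precisely what introduces the geometric constraints that your sketch skips. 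Second, and more to the point for this particular lemma, the entire content of the statement is the quantitative bookkeeping: the polynomial power $M$ and the thresholds $d_0,\beta_0,\mu_0$ are what make the lemma useful in Proposition~\ref{prop_low_quant_propagation}. Your final paragraph acknowledges this bookkeeping but does not carry it out, whereas the paper's proof \emph{is} that bookkeeping. Also note a minor imprecision: you describe the three terms in the $\max$ as covering three disjoint $\tau$-regimes, but a $\max$ does not work that way --- the bound is only useful in the single middle range of $\tau$ where \emph{all three} of $e^{-\kappa\delta^4\mu}$, $e^{-\ve\mu^2/(8\tau)}$, and $e^{-9\zeta\tau}$ are simultaneously dominated, and outside that window (small $\tau$ or large $\tau$) one must fall back on the a priori bound. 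This is again what drives the choice of $d$.
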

	\begin{proof}
		The proof is based on~\cite[Lemma B.2]{Laurent_2018}, here we state the explicit dependence with respect to $\delta$. We recall that in our context $\ve$ is fixed as $\ve=c \delta$ with $c$ depending on $r_0,n$ only and $\zeta=\tilde{c} \delta^2$ with $\tilde{c}$ depending on $r_0,n$ only. By inspecting the proof of~\cite[Lemma B.2]{Laurent_2018} we have the following conditions on $\beta$ and $d$:
		
		\begin{enumerate}
			\item We need $\beta \sqrt{\frac{4}{\zeta}}< \min (\frac{\zeta}{4 C_1}, \frac{\kappa \delta^4}{9 \zeta}, \frac{\sqrt{\ve}}{3 \sqrt{\zeta}})$. This is achieved by taking $\beta \leq \min(\delta^M, \delta^M \kappa)$ with $M$ sufficiently large depending only on $r_0,n$. 
			
			\item  We need $ 2d \leq 2d_0 = \min (\nu D, D/2)$ where 
			\begin{align}
				\nu \geq \zeta^A/D, \quad D=\min \left(\frac{\zeta}{4 C_1}, \frac{\kappa \delta^4}{9 \zeta}, \frac{\sqrt{\ve}}{3 \sqrt{\zeta}} \right),
			\end{align}
			for some $A$ depending only on $r_0,n$. We can then take
			\begin{align}
				d_0 \leq \min(\delta^M, \delta^M \kappa),
			\end{align}
			with $M$ sufficiently large depending only on $r_0,n$. 
			
			\item We have the condition  $\beta \leq d \sqrt{\zeta}/16$ on $\beta$. This is satisfied by taking $\beta\leq d \delta^M$.
		\end{enumerate}
		Combining the above gives the stated conditions on $d$ and $\beta$.
	\end{proof}

        \bigskip

The following is the optimization lemma that gives the stability estimates of Theorem~\ref{thm_log_close_tocone} and Theorem~\ref{thm_up_to_cone}. It is a particular case of~\cite[Lemma~A.3]{Laurent_2018}

\begin{lem}
\label{lem_optimization}
    Suppose that the real numbers $a,b,c >0$ satisfy $b \leq C_2 c$, $a \leq c $ and 
    $$
    a \leq e^{C_1 \mu} b+\frac{c}{\mu^\alpha},
    $$
for all $\mu\geq \mu_0$ and some $\alpha>0$. Then one has 
\begin{align}
    a\leq \frac{D_1}{\log(c/b+1)^\alpha}c,
\end{align}
with $D_1=(2 C_1)^\alpha \max (K,\mu_0)$ and $K=\frac{1}{2 C_1}\sup_{x \leq C_2} (1+x)^{1/2}x^{1/2}\log(1/x+1)$. 
\end{lem}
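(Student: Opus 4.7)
The plan is a direct optimization argument in the parameter $\mu$, balancing the exponentially-growing term $e^{C_1\mu}b$ against the polynomially-decaying term $c/\mu^\alpha$ on the right-hand side of the hypothesis. These two quantities are of comparable size when $\mu \sim \frac{1}{C_1}\log(c/b)$, which motivates the slightly subcritical choice $\mu^* = \frac{1}{2C_1}\log(c/b+1)$. The factor $1/2$ is chosen so that $e^{C_1\mu^*}b = b\sqrt{c/b+1} = \sqrt{b(c+b)}$ is a geometric-mean-type quantity that combines cleanly with the supremum defining $K$. The addition of $+1$ inside the logarithm avoids degeneracies when $b$ is close to $c$.

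The argument then naturally splits into two cases according to whether $\mu^* \geq \mu_0$ or not. In the main case $\mu^* \geq \mu_0$, I would apply the hypothesis with $\mu = \mu^*$ to obtain $a \leq \sqrt{b(c+b)} + (2C_1)^\alpha c/\log(c/b+1)^\alpha$. Introducing the dimensionless variable $x = b/c \in (0, C_2]$ rewrites the first term as $c\sqrt{x(1+x)}$, and the definition of $K$ gives exactly the bound $\sqrt{x(1+x)}\log(1/x+1) \leq 2C_1 K$ for all $x \in (0, C_2]$. This yields $\sqrt{b(c+b)} \leq 2C_1 K c/\log(c/b+1)$, and comparing $\log(c/b+1)^{-1}$ to $\log(c/b+1)^{-\alpha}$ (using that $\log(c/b+1) \geq 2C_1\mu_0$ in this regime) allows all constants to be absorbed into $D_1 = (2C_1)^\alpha \max(K, \mu_0)$.

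In the complementary case $\mu^* < \mu_0$, the inequality $\log(c/b+1) < 2C_1\mu_0$ gives $\log(c/b+1)^\alpha \leq (2C_1\mu_0)^\alpha \leq D_1$. The trivial hypothesis $a \leq c$ already suffices in this regime since $a \leq c \leq D_1 c / \log(c/b+1)^\alpha$ follows immediately. As a preliminary, one should verify that $K$ is finite: the function $x \mapsto x^{1/2}(1+x)^{1/2}\log(1/x+1)$ tends to $0$ as $x \to 0^+$ (since $x^{1/2}$ beats $|\log x|$) and is continuous on $[\varepsilon, C_2]$ for any $\varepsilon > 0$, so the supremum over $(0, C_2]$ is attained and finite.

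The main technical point, rather than a genuine obstacle, is the bookkeeping in the main case to reconcile the bound $c/\log(c/b+1)$ arising from $K$ with the target denominator $\log(c/b+1)^\alpha$; this comparison is where the $\max(K,\mu_0)$ structure of $D_1$ plays its role, ensuring the proof goes through uniformly in $\alpha$. All remaining steps are routine algebraic manipulations tracking the constants $C_1$, $\alpha$, and $\mu_0$ through the computation.
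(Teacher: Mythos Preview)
The paper does not give its own proof of this lemma; it simply records it as a particular case of \cite[Lemma~A.3]{Laurent_2018}. Your direct optimization argument---choosing $\mu^{*}=\frac{1}{2C_1}\log(c/b+1)$, splitting into the cases $\mu^{*}\ge\mu_0$ and $\mu^{*}<\mu_0$, and using the dimensionless variable $x=b/c$ to invoke the definition of $K$---is exactly the standard route and is correct in method.

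One word of caution on the final bookkeeping step. In the main case you obtain
\[
a\ \le\ \frac{2C_1K\,c}{L}\ +\ \frac{(2C_1)^{\alpha}c}{L^{\alpha}},\qquad L=\log(c/b+1),
\]
and then assert that the comparison of $L^{-1}$ with $L^{-\alpha}$, together with $L\ge 2C_1\mu_0$, lets ``all constants be absorbed into $D_1=(2C_1)^{\alpha}\max(K,\mu_0)$''. This is not quite automatic: for $\alpha>1$ the factor $L^{\alpha-1}$ is unbounded as $b\to 0$, so passing from $L^{-1}$ to $L^{-\alpha}$ costs more than a fixed constant, and even for $\alpha=1$ the sum of the two terms gives $2C_1(K+1)$ rather than $2C_1\max(K,\mu_0)$. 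In other words, the exact value of $D_1$ as recorded in the statement does not fall out of your argument without an extra hypothesis (such as $\mu_0\ge 1$ and $\alpha\le 1$, which are both satisfied in the paper's two applications), or without replacing $K$ by the $\alpha$-dependent quantity $\sup_{x\le C_2}(1+x)^{1/2}x^{1/2}\log(1/x+1)^{\alpha}$. This is a cosmetic issue with the constant rather than a flaw in your strategy; the form of the estimate and the dependence on the data are exactly right.
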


   \bigskip

\noindent \textbf{Acknowledgements} The authors were supported by the European Research Council 
of the European Union, grant 101086697 (LoCal), 
and the Research Council of Finland, grants 347715,
353096 (Centre of Excellence of Inverse Modelling and Imaging) 
and 359182 (Flagship of Advanced Mathematics for Sensing Imaging and Modelling). 
Views and opinions expressed are those of the authors only and do not 
necessarily reflect those of the European Union or the other funding 
organizations.

	\bibliographystyle{alpha}
\small \bibliography{bibl}

\end{document}